\newcommand{\ZZ}{\mathbb Z}
\newcommand{\RR}{\mathbb R}
\newcommand{\mP}{\mathcal{P}}
\newcommand{\mQ}{\mathcal Q}
\theoremstyle{definition}
\newtheorem{thm}{Theorem}
\newtheorem{prop}[thm]{Proposition}
\newtheorem{lem}[thm]{Lemma}
\newtheorem{rem}[thm]{Remark}
\newcommand{\mD}{\mathcal{D}}
\newcommand{\vsp}{\vspace{0.1cm}}
\begin{document}

\date{}
\author{Mar\'{\i}a Isabel Cortez \hspace{0.1cm} 
\& \hspace{0.1cm} Andr\'es Navas}

\title{Some examples of repetitive, non-rectifiable Delone sets}
\maketitle
 
A Delone set in $\mathbb{R}^d$ is a subset $\mathcal{D}$ that is 
separated and relatively dense in a uniform way. This means that 
there exist positive real numbers $\rho,\varrho$ such that $d(x,y) \geq \rho$ 
for all $x \neq y$ in $\mathcal{D}$, and for each $z \in \mathbb{R}^d$ 
there is $x \in \mathcal{D}$ satisfying $d(x,z) \leq \varrho$. 
Such a set is said to be {\em repetitive} if there is a function 
$R \!: \mathbb{N} \to \mathbb{N}$ so that for every pair of balls 
$B_r, B_R$ of radius $r$ and $R = R(r)$, respectively, we have that 
$B_R \cap \mathcal{D}$ contains a translated copy of $B_r \cap \mathcal{D}$. 

Besides this pure abstract definition, these sets are revelant in Mathematical Physics as 
models of solid materials, especially after the spectacular discovery of quasicrystals in 
the early eighties by Schechtman and his research team \cite{S}.

A Delone set $\mathcal{D}\! \subset\! \mathbb{R}^d$ is said to be {\em rectifiable} if it is 
bi-Lipschitz equivalent to $\mathbb{Z}^d$. This means that there exists a bijection 
$f \!: \mathcal{D} \to \mathbb{Z}^d$ such that, for some constant $L \geq 1$ and 
all $x, y$ in $\mathcal{D}$,
$$\frac{\| x - y\|}{L} \leq \| f(x) - f(y) \| \leq L \| x-y \|.$$  
The question of the existence of {\em non-rectifiable} Delone sets in $\mathbb{R}^d$, $d \!\geq\! 2$, 
was raised by Gromov (with a geometric group-theoretic motivation \cite{gromov}) 
and Furstenberg (with an ergodic-theoretic motivation \cite{BK2}; see also \cite{H}). This was solved 
in the affirmative by Burago and Kleiner in \cite{BK1} and, independently, by McMullen in \cite{mcmullen}. 
Later, in \cite{BK2}, Burago and Kleiner gave a criterium for a Delone set of the plane to be rectifiable. 
This was extended in \cite{ACG} to larger dimensions by Aliste, Coronel and Gambaudo, who applied 
it to show that Delone sets that are {\em linearly repetitive}, {\em i.e.} those for which the 
{\em repetitivity function} $R$ can be taken linear in $r$, are always rectifiable. This 
includes, for instance, the (set of vertices of the) Penrose tiling; see \cite{solomon}. 
They left open the question 
of the existence of (non-linearly) repetitive Delone sets that are non-rectifiable. The aim of 
this work is to answer this in the affirmative in a very strong way.

Repetitivity has a quite transparent geometric meaning. However, it is also relevant 
from the dynamical viewpoint. Indeed, it is straighforward to verify that this condition is 
equivalent to that the translation action of $\mathbb{R}^d$ on the closure of the orbit of the 
Delone set (endowed with an appropriate Gromov-Hausdorff metric or the 
Chabauty topology) is minimal. In this direction, 
our construction can be further refined to obtain not only minimality but also unique ergodicity, 
which is a much stronger property in this setting. Indeed, a result of Solomyak \cite{solomyak} 
roughly states that, in case of repetitivity, the latter condition is equivalent to that each patch of 
the set not only appears in every big-enough ball, but the number of ocurrences converges 
(as the radius of the ball goes to infinity, independently of the center) to a certain frequency.

\vspace{0.4cm}

\noindent{\bf Main Theorem.} {\em For each $d \geq 2$, there exists 
a subset of $\mathbb{Z}^d$ that is a repetitive, non-rectifiable 
Delone set for which the $\mathbb{R}^d$-action on the closure 
of its orbit is uniquely ergodic.} 

\vspace{0.4cm}

As in \cite{BK1}, in order to avoid technical difficulties mostly concerning notation, 
we will carry out the explicit construction just for the case $d\!=\!2$. (The general 
case proceeds analogously.) We strongly use the main idea of \cite{BK1}, though we 
need to proceed more carefully to get a Delone subset of $\mathbb{Z}^2$ (this is 
the easy part; compare \cite{garber,magazinov}), to guarantee repetitivity (this is 
much more tricky), and finally to ensure unique ergodicity (this is the most technical 
issue). To do this, we develop discrete analogues of the arguments of \cite{BK1} that 
are of independent interest, thus giving a proof of the main result of \cite{BK1} that is 
completely combinatorial ({\em i.e.} without passing to continuous models and/or approximating 
them by discrete ones). In this view, computations involving Jacobians become elementary 
counting arguments, whereas area estimates become density bounds for certain sets.  An 
important advantage of this approach is that it allows giving explicit estimates (and not only 
existencial results) all along the text. In particular, a backtracking  of the estimates of proof 
reveals a quite striking fact: given any unbounded function $R'$, there is a repetitive, 
non-rectifiable Delone set for which the repetitivity function $R$ satisfies $R (r_k) \leq R' (r_k)$ 
along an infinite sequence of radii $r_k \to \infty$. Our method also gives estimates for the speed 
of growing of the sequence $r_k$ provided $R'$ grows faster than linearly. 
This is in contrast to the aforementioned result 
of \cite{ACG}, according to which we cannot have $R(r) \preceq r$ for a non-rectifiable, repetitive 
Delone set. Actually, in our examples, linear repetitivity clearly arises as an obstruction 
for a Delone set to be non-rectifiable. Indeed, along the construction, we need to perform 
modifications that ensure non-rectifiability but that, after rescaling, become 
negligible in density. However, in case of linear repetitivity, the density of 
points where these modifications should be performed persists under scale changes.

The method of construction is still flexible in many ways. In order to illustrate this, recall 
that by a standard application of the ergodic decomposition, the set of invariant probability 
measures of an $\mathbb{R}^d$-action is a {\em Choquet simplex} (that is, a compact, 
convex, metrizable subset of a locally-convex real vector space such that every point 
therein is the mean with respect to a unique probability measure supported on its subset 
of extreme points). In the last paragraph of this paper, we show (the $d\!=\!2$ case of) the 
next extension of our main result (the case of larger dimension $d$ is straightforward 
and left to the reader).

\vspace{0.4cm}

\noindent{\bf Main Theorem (extended).} {\em For each $d \geq 2$ and any 
Choquet simplex $\mathcal{K}$, there exists a subset of $\mathbb{Z}^d$ that is a 
repetitive, non-rectifiable Delone set for which the $\mathbb{R}^d$-action on the closure 
of its orbit has a set of invariant probability measures isomorphic to $\mathcal{K}$.} 

\vspace{0.45cm}

\noindent{\bf I.} {\bf {\em Non-expansiveness implies coarse differentiability.}} As usual, 
for a real number $A$, we denote its integer part by $[A]$. Given two real numbers $A \leq B$, 
we denote $[\![ A,B]\!]$ the set of integers $n$ such that $A \leq n \leq B$. Given 
positive integers $M,N$, we let $R_{M,N} := [\![0,2MN]\!] \times [\![0,M]\!]$. Given 
$k \in [\![ 1,2N ]\!]$ and a positive integer $P$ dividing $M$, let $S_k^P$ be 
the subset of $R_{M,N}$ formed by the points of the form 
\begin{equation}\label{def-points}
x_{i,j}^k := \left( (k-1) M + i \frac{M}{P}, j \frac{M}{P} \right),
\end{equation}
where $i,j$ lie in $[\![ 0,P ]\!]$. By some abuse of notation, (\ref{def-points}) 
will still be used for\hspace{0.01cm} 
 $i \!=\! P \!+\! 1$ (yet $x_{P+1,j}^k$ \hspace{0.01cm}
does not belong to $S_k^P$). Notice that $S_k^P$ also depends on $M$ and $N$, 
but this dependence (which will be clear in each context) is suppressed just to 
avoid overloading the notation. 

To simplify, we will only work with Delone subsets $\mathcal{D}$ of $\mathbb{Z}^2$ 
satisfying what we call the {\em $2\mathbb{Z}$-property}: all points $(m,n)$ with an 
even $m$ do belong to $\mathcal{D}$. In particular, we will consider domino tilings 
of the plane made only of the pieces 1-1 and 1-0. More generally, we say that a subset 
$\mathcal{D} \subset [\![A,B]\!] \times [\![A',B']\!]$ satisfies the {\em $2\mathbb{Z}$-property} if all 
points $(m,n) \in [\![A,B]\!] \times [\![A',B']\!]$ with an even $m$ do belong to $\mathcal{D}$.

There is a little technical problem that arises when considering maps defined 
on strict subsets of either $\mathbb{Z}^2$ or $R_{M,N}$. To overcome this, 
we introduce a general construction. Namely, given either a Delone set 
$\mD \subset \ZZ^2$ or a subset $\mD \subset R_{M,N}$ satisfying the 
$2\mathbb{Z}$-property in each case, for every function $f\!: \mD\to \ZZ^2$ 
we define its extension $\hat{f}$ to either $\ZZ^2$ or $R_{M,N}$ taking values 
in $\frac{1}{2} \ZZ^2$ by letting 
$$
\hat{f}(x)=\left \{ \begin{array}{ll}
                             f(x) &  \mbox{ if } x\in \mD,\\
                             f \big( x+(1,0) \big) - (\frac{1}{2},0) & \mbox{ if }  x\notin \mD.
                          \end{array}\right.
$$
The proof of the next lemma is straightforward and we leave it to the reader.

\vspace{0.2cm}

\begin{lem}\label{extension}
{\em If $f \!: \mD \to \ZZ^2$ is $L$-bi-Lipschitz, then $\hat{f}$ is a $6L$-bi-Lipschitz map.}
\end{lem}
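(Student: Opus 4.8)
The plan is to view $\hat f$ as a small, controlled perturbation of the composition of $f$ with a rounding map onto $\mD$, and then, in the regime where this comparison is too weak, to exploit that $f$ takes values in the lattice $\ZZ^2$ while $\hat f$ takes values in $\tfrac12\ZZ^2$. First I would set up the rounding map $\pi$, defined by $\pi(x)=x$ if $x\in\mD$ and $\pi(x)=x+(1,0)$ otherwise; this is well defined because the $2\ZZ$-property forces the first coordinate of any $x\notin\mD$ to be odd, so $x+(1,0)$ has even first coordinate and thus lies in $\mD$ (in the $R_{M,N}$-case one checks along the way that $x+(1,0)$ is still inside the rectangle, the width $2MN$ being even). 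Two elementary remarks will drive the rest: $\|x-\pi(x)\|\le 1$ for every $x$, and $\hat f(x)-f(\pi(x))\in\{(0,0),(-\tfrac12,0)\}$, so that $\hat f(x)-\hat f(y)$ equals $f(\pi(x))-f(\pi(y))$ up to an additive vector of the form $(\varepsilon/2,0)$ with $\varepsilon\in\{-1,0,1\}$.

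The upper bound is then routine: assuming $x\ne y$ (so that $\|x-y\|\ge 1$, both points being in $\ZZ^2$), a triangle inequality together with $\|\pi(x)-\pi(y)\|\le\|x-y\|+1$ and the upper Lipschitz bound for $f$ gives $\|\hat f(x)-\hat f(y)\|\le L(\|x-y\|+1)+\tfrac12$, which is at most $3L\|x-y\|\le 6L\|x-y\|$ since $L\ge 1$.

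For the lower bound I would separate two cases. If $\pi(x)=\pi(y)$ while $x\ne y$, then the two points differ by $(1,0)$, one of them lying in $\mD$; a direct computation gives $\|\hat f(x)-\hat f(y)\|=\tfrac12$ and $\|x-y\|=1$, which suffices. If $\pi(x)\ne\pi(y)$, then $v:=f(\pi(x))-f(\pi(y))$ is a nonzero element of $\ZZ^2$ by injectivity of $f$, and the elementary fact that $\|v+(\varepsilon/2,0)\|\ge\tfrac12$ for every nonzero $v\in\ZZ^2$ and every $\varepsilon\in\{-1,0,1\}$ (split according to the sign of the first coordinate of $v$) gives the scale-free bound $\|\hat f(x)-\hat f(y)\|\ge\tfrac12$; on the other hand the lower Lipschitz bound for $f$ together with $\|\pi(x)-\pi(y)\|\ge\|x-y\|-1$ gives $\|\hat f(x)-\hat f(y)\|\ge(\|x-y\|-1)/L-\tfrac12$. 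Playing the two estimates against each other (using the scale-free one when $\|x-y\|\le 3L$ and the Lipschitz one otherwise, and $L\ge 1$ throughout) yields $\|\hat f(x)-\hat f(y)\|\ge\|x-y\|/(6L)$ in all cases.

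I expect the only real obstacle to be exactly this last point: the Lipschitz lower bound degenerates (and can even become vacuous) when $\|x-y\|$ is comparable to $L$, and what rescues the argument there is precisely the integrality of the values of $f$ and the half-integrality of those of $\hat f$. Everything else — well-definedness, the upper bound, and the $\pi(x)=\pi(y)$ case — is bookkeeping, and the same argument applies verbatim to $\mD\subset R_{M,N}$.
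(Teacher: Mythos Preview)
Your argument is correct: the rounding map $\pi$, the half-integer discrepancy $\hat f(x)-f(\pi(x))\in\{(0,0),(-\tfrac12,0)\}$, and the dichotomy between the scale-free lattice bound and the Lipschitz bound are exactly the ingredients one needs, and your case split at the threshold $\|x-y\|=3L$ checks out (with room to spare). The paper simply leaves this lemma to the reader, so there is no proof to compare against; your write-up is a clean execution of the ``straightforward'' argument the authors had in mind.
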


\vspace{0.2cm}

The technical key of the construction is given by the next

\vspace{0.1cm}

\begin{lem}\label{existencia regular} {\em Given $L \geq 1$, a positive $\tau < 1$ and an integer 
$P \geq 1$, there exist $\lambda > 0$ and positive integers $M_0,N_0$ such that the following holds: 
Given a multiple $M \geq M_0$ of $P$ and a subset $\mD \subset R_{M,N}$, with $N \geq N_0$, 
satisfying the $2\mathbb{Z}$-property, let $f \!: \mD \to \mathbb{Z}^2$ be an $L$-bi-Lipschitz map, and 
denote $v_{M,N}^f := f(2MN,0) - f (0,0)$. Assume that for all points of the form $x_{i,j}^k$ above that do 
belong to $\mD$,}
\begin{equation}\label{kappa1}
\frac{\big\| f ({x}_{i+1,j}^{k}) - f (x_{i,j}^k) \big\|}{M/P}
\leq (1 + \lambda) \frac{\|v_{M,N}^f \|}{2MN}
\end{equation} 
\begin{equation}\label{kappa2}
\left( \mbox{resp. } \quad \frac{ \big\| f ({x}_{i+1,j}^{k} + (1,0)) - f (x_{i,j}^k) \big\| }{1+M/P}
\leq (1 + \lambda) \frac{\| v^f_{M,N}\|}{2MN} \right)\!,
\end{equation}
{\em provided $x_{i+1,j}^k$ (resp. $x_{i+1,j}^k + (1,0)$) lies in $\mD$.} 
{\em Then there is $k_* \!\in\! [\![ 1,2N-1 ]\!]$ such that for all $x_{i,j}^{k_*} \in S_{k_*}^P$,} 
\begin{equation}\label{rregular}
\frac{\big\langle \hat{f}(x_{i,j}^{k_*} + (M,0)) - \hat{f} (x_{i,j}^{k_*}) , v^f_{M,N} \big\rangle }{M} 
\geq (1 - \tau) \frac{\| v^f_{M,N} \|^2}{2MN}.
\end{equation}
\end{lem}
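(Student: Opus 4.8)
The plan is to exploit the following dichotomy in the long horizontal strip $R_{M,N}$: the displacement $v_{M,N}^f$ represents the ``average'' image of a horizontal step of length $2MN$, so if we subdivide $[\![0,2MN]\!]$ into the $2N$ blocks of width $M$ indexed by $k \in [\![1,2N]\!]$, then the sum over $k$ of the horizontal increments $\hat f(x_{i,j}^k+(M,0))-\hat f(x_{i,j}^k)$ telescopes (after using Lemma~\ref{extension} to control the error terms coming from points outside $\mD$, which are of size $O(1)$ and hence negligible after dividing by $M$ once $M$ is large) to something very close to $v_{M,N}^f$. Pairing with $v_{M,N}^f$ and dividing by $M$, the \emph{average over $k$} of the left-hand side of (\ref{rregular}) is therefore at least (something close to) $\|v_{M,N}^f\|^2/(2MN)$. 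If every single $k$ had its average-over-$(i,j)$ inner product \emph{strictly} below the threshold $(1-\tau)\|v_{M,N}^f\|^2/(2MN)$, summing over $k$ would contradict this lower bound — \emph{provided} we can also show that no individual block can overshoot the target by too much. That upper bound on each block is exactly where the hypotheses (\ref{kappa1})--(\ref{kappa2}) enter: they say that within the $k$-th block each elementary horizontal step of the grid $S_k^P$ is stretched by a factor at most $(1+\lambda)\|v_{M,N}^f\|/(2MN)$ (measured over the mesh size $M/P$), so by the triangle inequality and Cauchy--Schwarz the displacement across a full block of width $M$ has $\langle \cdot, v_{M,N}^f\rangle / M \le (1+\lambda)\|v_{M,N}^f\|^2/(2MN)$.

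So the first main step is to make the telescoping rigorous: fix a row index $j$ and a column index $i$, and write $\hat f(x_{i,j}^{2N}+(M,0)) - \hat f(x_{i,j}^1)$ as the telescoping sum over $k$ of $\hat f(x_{i,j}^{k+1}) - \hat f(x_{i,j}^k) = \hat f(x_{i,j}^k + (M,0)) - \hat f(x_{i,j}^k)$, noting that $x_{i,j}^{k+1} = x_{i,j}^k + (M,0)$ by (\ref{def-points}); the endpoints of this sum differ from $f(2MN,0)$ and $f(0,0)$ by a bounded amount (using the $2\mathbb Z$-property, Lemma~\ref{extension}, and the fact that $(2MN,0)$ and $(0,0)$ lie in $\mD$), so pairing with $v_{M,N}^f$ and dividing by $M$ gives $\frac{1}{2N}\sum_{k=1}^{2N} \langle \hat f(x_{i,j}^k+(M,0)) - \hat f(x_{i,j}^k), v_{M,N}^f\rangle / M \ge \|v_{M,N}^f\|^2/(2MN) - C(L)/(MN)$ for a constant $C(L)$. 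The second step is the per-block upper bound just described: for each fixed $k$, each $x_{i,j}^k$ with $x_{i+1,j}^k$ in $\mD$, walking from $x_{0,j}^k$ to $x_{P,j}^k$ in $P$ mesh-steps (with the $(1,0)$-corrections handled by the ``resp.'' variant (\ref{kappa2}) when the grid point is missing, again at bounded cost via $\hat f$), one gets $\|\hat f(x_{P,j}^k) - \hat f(x_{0,j}^k)\| \le (1+\lambda)\|v_{M,N}^f\|/(2N) + O(1)$, hence the inner product version is $\le (1+\lambda)\|v_{M,N}^f\|^2/(2MN) + O(\|v_{M,N}^f\|/M)$; note $\|v_{M,N}^f\| \le 2MNL$, so this error is $O(NL)$, which after dividing by the relevant normalization is controlled once $M \ge M_0$. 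Averaging this upper bound over the $j$'s (and handling via $\hat f$ those $j$ for which the grid point $x_{i,j}^k$ itself is missing) gives: for every $k$, the row-and-column average $A_k := \mathrm{avg}_{i,j} \langle \hat f(x_{i,j}^k+(M,0)) - \hat f(x_{i,j}^k), v_{M,N}^f\rangle / M$ satisfies $A_k \le (1+\lambda)\|v_{M,N}^f\|^2/(2MN) + \varepsilon_1$.

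The third step combines the two: if for \emph{every} $k \in [\![1,2N-1]\!]$ and some $x_{i,j}^{k} \in S_k^P$ the bound (\ref{rregular}) \emph{failed}, it would not immediately give $A_k$ small — so here I would instead run the argument at the level of the averages $A_k$ directly. From step~1, $\mathrm{avg}_k A_k \ge \|v_{M,N}^f\|^2/(2MN) - \varepsilon_0$; from step~2, $A_k \le (1+\lambda)\|v_{M,N}^f\|^2/(2MN)+\varepsilon_1$ for all $k$ (including $k = 2N$), and the single term $k = 2N$ contributes at most $(1+\lambda)\|v_{M,N}^f\|^2/(2MN)+\varepsilon_1$ out of $2N$ terms, i.e. a $1/(2N)$ fraction. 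Hence there must exist $k_* \in [\![1,2N-1]\!]$ with $A_{k_*} \ge \|v_{M,N}^f\|^2/(2MN) - \varepsilon_0 - \frac{1+\lambda}{2N}\cdot\frac{\|v_{M,N}^f\|^2}{2MN} - \varepsilon_2$. Choosing $\lambda$ small and then $M_0, N_0$ large makes the right-hand side $\ge (1-\tau/2)\|v_{M,N}^f\|^2/(2MN)$. Finally, since within the block $k_*$ consecutive mesh-step stretches differ from each other by at most $O(\lambda)\|v_{M,N}^f\|/(2MN)$ plus bounded error (a Cauchy--Schwarz / variance estimate comparing the average $A_{k_*}$ to each individual $(i,j)$ contribution, using again (\ref{kappa1})--(\ref{kappa2}) as two-sided control on each step), the individual values $\langle \hat f(x_{i,j}^{k_*}+(M,0)) - \hat f(x_{i,j}^{k_*}), v_{M,N}^f\rangle / M$ cannot drop below $(1-\tau)\|v_{M,N}^f\|^2/(2MN)$ for any $(i,j)$ without some other $(i,j)$ exceeding $(1+\lambda)\|v_{M,N}^f\|^2/(2MN)$, contradicting the per-step upper bound. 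Tracking all the $\varepsilon$'s and fixing $\lambda, M_0, N_0$ at the end closes the argument.

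The main obstacle I expect is \emph{not} the telescoping (step~1) but the passage in step~3 from a good \emph{average} $A_{k_*}$ to a good \emph{pointwise} bound at every $x_{i,j}^{k_*}$: this is where the hypothesis (\ref{kappa1}) doing double duty — as a rigidity/near-equality statement forcing all mesh-step increments within a near-extremal block to be essentially equal and essentially aligned with $v_{M,N}^f$ — has to be used delicately, and where the choice of $\lambda$ (small relative to $\tau$) and the quantitative dependence of $M_0, N_0$ on $L, \tau, P$ really gets pinned down. Keeping the $\frac{1}{2}\ZZ^2$-valued extension $\hat f$ from introducing uncontrolled boundary errors at missing grid points (especially for the $i = P+1$ convention and for rows/columns landing outside $\mD$) is the routine-but-fiddly part.
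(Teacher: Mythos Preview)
Your overall architecture --- telescope along the strip to get a lower bound on the average over $k$, use the hypotheses to get a per-block upper bound, then locate a good $k_*$ --- is the right shape and matches the paper's strategy. The gap is exactly where you flagged it: the passage in step~3 from a good \emph{average} $A_{k_*}$ over all $(i,j)$ to the pointwise bound (\ref{rregular}) at \emph{every} $x_{i,j}^{k_*}$. Your proposed mechanism for this (``two-sided control on each step'', a ``variance estimate'' forcing near-equality) does not follow from the hypotheses. Conditions (\ref{kappa1})--(\ref{kappa2}) are purely one-sided upper bounds on the \emph{norms} of \emph{horizontal} mesh-steps; they say nothing about the direction of individual steps, and --- crucially --- nothing at all relating different heights $j$ within the same block. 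So even if $A_{k_*}\ge (1-\tau/2)\|v\|^2/(2MN)$ and every individual term is $\le (1+2\lambda)\|v\|^2/(2MN)$, a single $(i,j)$ can still have inner product far below $(1-\tau)\|v\|^2/(2MN)$: with $(P+1)^2$ terms, the deficit at one grid point is absorbed by the others with room to spare once $P\ge 2$. The ``some other $(i,j)$ would exceed $(1+\lambda)$'' claim is false.

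The paper bypasses this entirely by reversing the quantifiers. It argues by contradiction: if every $k\in[\![1,2N-1]\!]$ has \emph{some} bad point $x_{i_k,j_k}^k$, then pigeonhole on the pair $(j_k,\text{parity of }k)$ produces a single height $j_*$ and at least $\big[\tfrac{2N-1}{2(P+1)}\big]$ indices $k_s$ (of the same parity, so the associated width-$M$ intervals are disjoint) with bad points at height $j_*$. One then telescopes $v_{M,N}^f$ along the single row $j=j_*$, routing through these bad points; the bad segments contribute at most $(1-\tau)$ times the average each, the remaining segments at most $(1+2\lambda)$ by the hypothesis, and the two vertical boundary terms are $O(\hat LM)$. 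Summing gives $\|v\|^2$ strictly less than itself for $N$ large and $\lambda\lesssim\tau/P$. Because everything happens at one fixed $j_*$, the lack of control across heights never enters. If you prefer to stay closer to your outline, an alternative fix is: for each fixed $(i,j)$ bound the number of $k$'s that are bad at that $(i,j)$ (your steps~1--2 give $|\{k:\text{bad at }(i,j)\}|\le 2N\cdot O(\lambda/\tau)$), then take the union over the $(P+1)^2$ choices of $(i,j)$ and choose $\lambda\lesssim\tau/(P+1)^2$ so that the union misses some $k_*$. But that is not the argument you wrote.
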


\begin{proof} We will deal with $\hat{f}$ instead of $f$. Accordingly, we denote $\hat{L} := 6L$.
Notice that in case $x_{i+1,j}^k$ does not belong to $\mD$, we still have 
\begin{small}
$$\frac{ \big\| \hat{f} ({x}_{i+1,j}^{k} \!-\! \hat{f} (x_{i,j}^k) \big\| }{M/P}
\leq
\frac{ \big\| f ({x}_{i+1,j}^{k} \!+\! (1,0)\!) \!-\! f (x_{i,j}^k) \big\| \!+\! \frac{1}{2}}{M/P}
\leq
\frac{(1 \!+\! \lambda) \| v^f_{M,N}\|}{2MN} \frac{(1\!+\!M\!/\!P)}{M/P} + \frac{P}{2M}.$$
\end{small}Thus,
\begin{equation}\label{nec}
\frac{ \big\| \hat{f} ({x}_{i+1,j}^{k} ) \!-\! \hat{f} (x_{i,j}^k) \big\| }{M/P}
\leq
(1 + 2\lambda) \frac{\| v^f_{M,N}\|}{2MN}, 
\end{equation}
where the last inequality holds provided 
$$\frac{P}{2M} \leq \frac{\lambda \| v^f_{M,N}\|}{4MN} \qquad \mbox{and} \qquad 
\left( 1+\frac{P}{M} \right) (1+\lambda) \leq 1 + \frac{3\lambda}{2},$$
which is always the case for $M \geq 
\max \big\{ \frac{LP}{\lambda}, \frac{2P(1+\lambda)}{\lambda} \big\}$.

Assume no square $S_{k*}^P$ satisfies the required property. A direct application 
of the pigeonhole principle then shows that there is a ``height'' $j_* \in [\![ 0, P ]\!]$ 
such that at least $r := \big[ \frac{2N-1}{2(P+1)} \big]$ squares $S_{k_1}, \ldots, S_{k_r}$ 
contain points $x_{i_1,j_*}^{k_1}, \ldots, x_{i_r,j_*}^{k_r}$, respectively, satisfying 
the reverse inequality to (\ref{rregular}) and such that all the indices $k_s$ 
have the same parity and are $\leq 2N-1$. 

Notice that $v_{M,N}^f$ equals 
$$\hat{f} (0,j_*) - \hat{f} (0,0) + \sum_{s=1}^{r} \big( \hat{f} (x_{i_s,j_*}^{k_s}) - \hat{f} (x_{i_s,j_*}^{1+k_s}) \big)
+ \hat{f} (2MN,0) - \hat{f} (2MN, j_*) + $$
$$+ \left[ \hat{f} (2MN,j_*) - \hat{f} (x_{i_r,j_*}^{k_r}) +  
 \sum_{s=2}^{r} \hat{f} (x_{i_s,j_*}^{k_s}) - \hat{f} (x_{i_{s-1},j_*}^{1+k_{s-1}}) + 
\hat{f} (x_{i_1,j_*}^{1+k_1}) - \hat{f} (0,j_*) \right].$$ 
The (non normalized) projections over $v_{M,N}^f$ of the expression into brackets can be estimated using the hypothesis: 
it is smaller than or equal to 
$$\left( 2MN - M \Big[ \frac{2N-1}{2(P+1)} \Big] \right) (1+2\lambda) \frac{\| v_{M,N}^f \|^2}{2MN}.$$ 
Therefore, by the choice of the points $x_{i_s,j_*}^{k_s}$, the value of   
$\| v_{M,N}^f \|^2$ is bounded from above by 
$$\big\langle \hat{f}(0,j_*)-\hat{f}(0,0) , v^f_{M,N} \big\rangle 
+ (1-\tau)\frac{2N - 1}{2(P+1)}\frac{\| v^f_{M,N} \|^2}{2N} 
+ \big\langle \hat{f} (2MN,0) - \hat{f} (2MN, j_*), v_{M,N}^f \big\rangle +$$
$$+ \left( 2MN - M \Big[ \frac{2N-1}{2(P+1)} \Big] \right) (1+2\lambda) \frac{\| v_{M,N}^f \|^2}{2MN}.$$
Since $\hat{f}$ is $\hat{L}$-Lipschitz, we finally conclude that  
\begin{eqnarray*}\label{regular}
\| v_{M,N}^f \|^2 \leq 2\hat{L}M \|v^f_{M,N}\| + \left ( \! 1-\frac{1}{2N} 
\Big[\frac{2N-1}{2(P+1)} \Big] \! \right ) \! (1+2\lambda) \| v^f_{M,N} \|^2 
+(1-\tau)\frac{2N - 1}{2(P+1)}\frac{\| v^f_{M,N} \|^2}{2N}.
\end{eqnarray*}
Thus we get
$$
\| v^f_{M,N} \| \left( -2\lambda+\frac{(1+2\lambda)}{2N} \Big[ \frac{2N-1}{2(P+1)} \Big] -  
\frac{(1-\tau)}{2N} \frac{2N-1}{2(P+1)} \right) < 2\hat{L}M,
$$
hence
$$
\| v^f_{M,N} \| \left( -2\lambda+\frac{(1+2\lambda)(2N-2P-2)}{4N(P+1)}  -  
\frac{1-\tau}{2(P+1)} \right) < 2\hat{L}M.  
$$
For $N>\frac{2(P+1)}{\tau}$, we have 
$$\frac{N \tau - P - 1}{2 (2NP + N + P + 1)} > \frac{\tau}{12P},$$ 
thus for $\lambda \leq \frac{\tau}{12P}$, we obtain
$$-2\lambda+\frac{(1+2\lambda)(2N-2P-2)}{4N(P+1)}  -  \frac{1-\tau}{2(P+1)}  > 0.$$
The bi-Lipschitz condition of $f$ then yields 
$$
\frac{2NM}{L} \left( -2\lambda + \frac{(1+2\lambda)(2N-2P-2)}{4N(P+1)} - \frac{1-\tau}{2(P+1)} \right) 
< 2\hat{L}M.
$$
However, one easily checks that given $M$, this is impossible for 
$$ \lambda \leq \frac{\tau}{12P}, 
\qquad N \geq N_0 := 1 + \left[ \frac{1}{\tau} \big( L\hat{L} + 2P + 2 + \tau \big) \right].$$
This finishes the proof for $M \geq M_0 := \frac{(L+4)P}{\lambda}$.
\end{proof}

\vspace{0.6cm}

In analogy to the terminology introduced in \cite{BK1}, a square 
$S_{k_*}^P$ satisfying the conclusion of the preceding lemma ({\em i.e.} 
condition (\ref{rregular})) will be said to be $(M, N, \tau, f)$-regular.
 
\vspace{0.4cm}

\begin{lem}\label{distance}
{\em Given $L\geq 1$, $\varepsilon>0$ and an integer $P \geq 1$, there exists a positive 
$\tau < 1$ such that the following holds: Let $M \geq M_0:= \frac{(L+4)P}{\lambda}$ be a 
multiple of $P$, where $\lambda := \frac{\tau}{12P}$. Suppose $f  \!\!: \mD \to \mathbb{Z}^2$ 
is an $L$-bi-Lipschitz map such that for each $x_{i,j}^k \in S_k^P$, either (\ref{kappa1}) or 
(\ref{kappa2}) holds according to the case. Then for every $x_{i,j}^{k_*}$ belonging to an  
$(M,N,\tau, f)$-regular square $S_{k_*}^P$, one has}   
\begin{equation}\label{eq-distance}
\left \| \frac{\hat{f}(x^{k_*+1}_{i,j})-\hat{f}(x_{i,j}^{k_*})}{M}-\frac{v_{M,N}^f}{2MN}\right\|\leq \varepsilon.
\end{equation}
\end{lem}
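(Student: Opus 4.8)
\noindent The plan is to show that the vector $w := \big(\hat f(x_{i,j}^{k_*+1}) - \hat f(x_{i,j}^{k_*})\big)/M$ is at the same time barely longer than the target vector $u := v_{M,N}^f/(2MN)$ and almost parallel to it; a vector which is both can only be close to $u$. Precisely, (\ref{eq-distance}) will follow from the identity
$$\|w-u\|^2 = \|w\|^2 - 2\langle w,u\rangle + \|u\|^2$$
combined with an upper bound for $\|w\|$ and a lower bound for $\langle w,u\rangle$.

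For the upper bound, observe that $x_{i,j}^{k_*+1} = x_{i,j}^{k_*}+(M,0) = x_{i+P,j}^{k_*}$, so that, telescoping,
$$\hat f(x_{i,j}^{k_*+1}) - \hat f(x_{i,j}^{k_*}) = \sum_{\ell=0}^{P-1}\big(\hat f(x_{i+\ell+1,j}^{k_*}) - \hat f(x_{i+\ell,j}^{k_*})\big),$$
where each consecutive pair is covered by the hypothesis of the lemma (for the index $k_*$ or $k_*+1$, both of which are in the admissible range since $k_* \in [\![1,2N-1]\!]$), hence the estimate (\ref{nec}) established in the proof of Lemma \ref{existencia regular} applies to it. Its requirement $M \geq \max\{LP/\lambda,\,2P(1+\lambda)/\lambda\}$ holds because $M \geq M_0 = (L+4)P/\lambda$ and $L+4 \geq \max\{L,\,2(1+\lambda)\}$. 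Summing the $P$ resulting bounds gives $\|w\| \leq (1+2\lambda)\,\|u\|$. For the lower bound, the regularity of $S_{k_*}^P$ — that is, (\ref{rregular}) — becomes, after substituting $v_{M,N}^f = 2MN\,u$, exactly the inequality $\langle w,u\rangle \geq (1-\tau)\,\|u\|^2$.

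Feeding these two estimates into the identity above yields $\|w-u\|^2 \leq \big[(1+2\lambda)^2 - 1 + 2\tau\big]\|u\|^2$. Since $\lambda = \tau/(12P) \leq \tau/12$, one checks $(1+2\lambda)^2 - 1 = 4\lambda(1+\lambda) < \tau/2$, whence $\|w-u\|^2 \leq \tfrac{5}{2}\tau\,\|u\|^2$; and since $f$ is $L$-bi-Lipschitz and the endpoints $(0,0),(2MN,0)$ lie in $\mD$ by the $2\mathbb{Z}$-property, we get $\|v_{M,N}^f\| \leq 2MNL$, so $\|u\| \leq L$ and $\|w-u\| \leq L\sqrt{5\tau/2}$. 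It therefore suffices to fix at the outset $\tau := \min\{\tfrac12,\,2\varepsilon^2/(5L^2)\}$, which in turn pins down $\lambda$ and $M_0$.

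There is no deep obstacle here, only a careful bookkeeping of constants; the one place that needs a moment of care is the very first step — justifying that (\ref{nec}) may be invoked for the \emph{extended} map $\hat f$ at each consecutive pair $x_{i+\ell,j}^{k_*},x_{i+\ell+1,j}^{k_*}$ irrespective of which of these points actually belong to $\mD$. This is precisely what the $2\mathbb{Z}$-property together with the definition of $\hat f$ were arranged to guarantee, and it is already carried out in the opening lines of the proof of Lemma \ref{existencia regular}.
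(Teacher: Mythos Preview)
Your proof is correct and follows essentially the same approach as the paper: both combine the norm bound $\|w\|\le(1+2\lambda)\|u\|$ obtained by telescoping and applying (\ref{nec}) $P$ times with the inner-product bound $\langle w,u\rangle\ge(1-\tau)\|u\|^2$ coming from regularity. The only cosmetic difference is that the paper decomposes $w$ into components parallel and orthogonal to $v_{M,N}^f$ and bounds each separately, whereas you feed the same two estimates directly into the polarization identity $\|w-u\|^2=\|w\|^2-2\langle w,u\rangle+\|u\|^2$; the resulting admissible $\tau$ differs by an irrelevant constant factor.
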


\begin{proof}
Again, we denote $\hat{L}:=6L$. Given $x_{i,j}^{k_*} \in S^P_{k_*}$, let us write 
$$\hat{f} (x_{i,j}^{k_*+1}) - \hat{f} (x_{i,j}^{k_*}) = \alpha_{i,j}^{k_*} v_{M,N}^f 
+ \beta_{i,j}^{k_*} v_{M,N}^{\perp}$$ 
for certain reals $\alpha_{i,j}^{k_*}$ and $\beta_{i,j}^{k_*}$, where $v_{M,N}^{\perp}$ 
is a unit vector orthogonal to $v_{M,N}^f$. On the one hand, by (\ref{rregular}), 
$$\alpha_{i,j}^{k_*} \frac{\| v_{M,N}^f\|^2}{M} \geq (1-\tau) \frac{\| v_{M,N}^f \|^2}{2MN},$$
hence
\begin{equation}\label{lower}
\alpha_{i,j}^{k_*} \geq \frac{1 - \tau}{2N}.
\end{equation}
On the other hand, since $M \geq M_0$, using $P$ times (\ref{nec}) and the triangle inequality, 
we obtain
$$(\alpha_{i,j}^{k_*})^2 \| v_{M,N}^f \|^2
\leq (\alpha_{i,j}^{k_*})^2\|v_{M,N}^f\|^2 + (\beta_{i,j}^{k_*})^2 
\leq \left( (1 + 2 \lambda) \frac{\| v_{M,N}^f \|}{2N} \right)^2.$$
Therefore, 
$$|\alpha_{i,j}^{k_*}| \leq \frac{1 + 2\lambda}{2N} \leq \frac{1+\tau}{2N}.$$ 
Similarly, using (\ref{kappa1}), (\ref{lower}) and the previous estimate, we obtain 
$$\left( \frac{(1-\tau)^2 \| v_{M,N}^f \|^2}{4N^2} + (\beta_{i,j}^{k_*})^2 \right) 
\leq (\alpha_{i,j}^{k_*})^2\|v_{M,N}^f\|^2 + (\beta_{i,j}^{k_*})^2
\leq (1 + 2\lambda)^2 \frac{\| v_{M,N}^f \|^2}{4N^2},$$
which yields 
$$(\beta_{i,j}^{k_*})^2 \leq \frac{\|v_{M,N}^f\|^2}{4N^2} 
\left( (1+\tau)^2 - (1-\tau)^2 \right) 
= \frac{\tau \|v_{M,N}^f\|^2}{N^2}.$$
As a consequence, 
\begin{small}
\begin{eqnarray*}
\left\| \frac{\hat{f}(x^{k+1}_{i,j}) \!-\! \hat{f}(x_{i,j}^k)}{M}-\frac{v_{M,N}^f}{2MN} \right\| 
\!\!\!
&=& \!\!\!
\left\| \frac{\alpha_{i,j}^k}{M} v_{M,N}^f 
      + \frac{\beta_{i,j}^k}{M} v_{M,N}^{\perp} - \frac{v_{M,N}^f}{2MN} \right\| 
\leq \| v_{M,N}^f  \|\left| \frac{\alpha_{i,j}^k}{M} -\frac{1}{2MN}  \right| + \frac{| \beta_{i,j}^k |}{M}\\
&\leq& \!\!\!  \| v_{M,N}^f  \| \frac{\tau}{2MN} + \|v_{M,N}^f\|\frac{\sqrt{\tau}}{M N}
\leq 2MNL\left(  \! \frac{\tau}{2MN} \!+\! \frac{\sqrt{\tau}}{M N}  \! \right)
\leq  3 L \sqrt{\tau}
= \varepsilon,
\end{eqnarray*}
\end{small}where the last equality holds for $\tau :=\frac{\varepsilon^2}{9L^2}$.
\end{proof}

\vspace{0.2cm}

Below we put together the two preceding lemmas into a single statement.

\vspace{0.1cm}

\begin{prop} \label{prop-regularidad}
{\em Given $L \geq 1$, a positive $\varepsilon < 1$ and a positive integer $P$, 
there exist $\lambda > 0$ and positive integers $M_0, N_0$ such that the following 
holds: Given a subset $\mD \subset R_{M,N}$ satisfying the $2\mathbb{Z}$-property, 
with $M \geq M_0$ a multiple of $P$ and $N \geq N_0$, let $f \!: \mD \to \mathbb{Z}^2$ 
be an $L$-bi-Lipschitz map. Assume that for every point of the form $x_{i,j}^k$ 
that belongs to $\mD$,}  
$$\frac{\big\| f ({x}_{i+1,j}^{k}) - f (x_{i,j}^k) \big\|}{M/P}
\leq (1 + \lambda) \frac{\|v_{M,N}^f \|}{2MN}$$
$$\left( \mbox{resp. } \quad \frac{ \big\| f ({x}_{i+1,j}^{k} + (1,0)) - f (x_{i,j}^k) \big\| }{1+M/P}
\leq (1 + \lambda) \frac{\| v^f_{M,N}\|}{2MN} \right)\!,$$
{\em provided $x_{i+1,j}^k$ (resp. $x_{i+1,j}^k + (1,0)$) lies in $\mD$.} 
{\em Then there is a subset 
$$S = S_k^P := \left\{ \Big( (k-1)M + i \frac{M}{P} , j \frac{M}{P} \Big) \!: 
0 \leq i \leq P, \hspace{0.1cm} 0 \leq j \leq P \right\}$$ 
such that every $x \in S$ satisfies}   
\begin{eqnarray*}\label{new-epsilon}
\left \| \frac{\hat{f} ( x+(M,0) )-\hat{f}(x)}{M}-\frac{ f(2M N,0) - f(0,0) }{2MN}\right\| 
\leq \varepsilon.
\end{eqnarray*}
\end{prop}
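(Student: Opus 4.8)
The plan is to obtain this proposition by simply chaining the two preceding lemmas, so the argument should be essentially two lines of substance; the only genuine point to check is that the constants produced by Lemma~\ref{distance} and Lemma~\ref{existencia regular} are mutually compatible. First I would feed the data $L,\varepsilon,P$ into Lemma~\ref{distance}: this fixes a value $\tau\in(0,1)$ --- explicitly $\tau=\varepsilon^2/(9L^2)$, which is indeed $<1$ since $L\geq1$ and $\varepsilon<1$ --- together with $\lambda:=\tau/(12P)$ and the threshold $M_0:=(L+4)P/\lambda$. Then I would insert this same $\tau$ (and the same $L,P$) into Lemma~\ref{existencia regular}: reading off its proof, the admissible parameters there are exactly $\lambda\leq\tau/(12P)$, $M\geq(L+4)P/\lambda$ and $N\geq N_0:=1+\big[\tfrac1\tau(6L^2+2P+2+\tau)\big]$ (in the notation of that proof, where $\hat L=6L$). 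Hence the very same $\lambda$ and $M_0$ already work, and one only has to additionally require $N\geq N_0$. These $(\lambda,M_0,N_0)$ are declared to be the constants asserted by the proposition.

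Next, given $\mD\subset R_{M,N}$ satisfying the $2\ZZ$-property with $M\geq M_0$ a multiple of $P$ and $N\geq N_0$, and an $L$-bi-Lipschitz $f\colon\mD\to\ZZ^2$ as in the statement, I would observe that the two displayed hypotheses of the proposition are verbatim hypotheses~(\ref{kappa1})--(\ref{kappa2}) of Lemma~\ref{existencia regular} and, simultaneously, the running assumption of Lemma~\ref{distance} that (\ref{kappa1}) or (\ref{kappa2}) holds at each $x_{i,j}^k$. Therefore Lemma~\ref{existencia regular} yields an index $k_*\in[\![\,1,2N-1\,]\!]$ such that $S_{k_*}^P$ is $(M,N,\tau,f)$-regular, i.e.\ (\ref{rregular}) holds at every $x_{i,j}^{k_*}\in S_{k_*}^P$; and Lemma~\ref{distance}, applied to this regular square, gives
$$\left\|\frac{\hat f(x_{i,j}^{k_*+1})-\hat f(x_{i,j}^{k_*})}{M}-\frac{v_{M,N}^f}{2MN}\right\|\leq\varepsilon$$
for every such point. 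To conclude I would only need the bookkeeping identities $x_{i,j}^{k_*+1}=x_{i,j}^{k_*}+(M,0)$ (immediate from (\ref{def-points})), so that $\hat f(x_{i,j}^{k_*+1})=\hat f(x_{i,j}^{k_*}+(M,0))$, and $v_{M,N}^f=f(2MN,0)-f(0,0)$ by definition; then, taking $S:=S_{k_*}^P$ --- which is literally the set displayed in the statement --- the inequality above is the required estimate, valid for all $x\in S$.

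The ``hard part'' here is essentially nil beyond the constant bookkeeping, since everything substantive was already carried out in Lemmas~\ref{existencia regular} and~\ref{distance}. The one place to be slightly careful is that the value of $\lambda$ demanded by Lemma~\ref{distance}, namely $\tau/(12P)$, coincides with the \emph{largest} value permitted by Lemma~\ref{existencia regular}, so there is no conflict; had the two lemmas imposed incompatible constraints one would have to shrink $\lambda$ (and correspondingly enlarge $M_0$), but here that is unnecessary, and the explicit values above go through unchanged.
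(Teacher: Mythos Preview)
Your proposal is correct and is exactly the paper's approach: the paper does not even write out a proof of Proposition~\ref{prop-regularidad}, it simply announces that it ``puts together the two preceding lemmas into a single statement,'' and the explicit constants you extract are precisely those recorded afterwards in Remark~\ref{primera-estimacion}. Your constant bookkeeping (feeding $L,\varepsilon,P$ into Lemma~\ref{distance} to fix $\tau$, then using that $\tau$ in Lemma~\ref{existencia regular}, and noting that both lemmas use the same $\lambda=\tau/(12P)$ and $M_0=(L+4)P/\lambda$) is exactly right.
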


\vspace{2mm}

\begin{rem} \label{primera-estimacion}
The estimates and definitions given along the proofs of Lemmas \ref{existencia regular} 
and \ref{distance} show that, given $L \geq 1$, a positive constant $\varepsilon < 1$ and 
a positive integer $P$, the conclusion of Proposition \ref{prop-regularidad} holds for 
$$\lambda \leq \frac{\varepsilon^2}{108 P L^2},$$
$$M_0 \geq \frac{108 P^2 L^2 (L+4)}{\varepsilon^2},$$
and
$$N_0 \geq 2 + \frac{216 L^2 P (3 L^2 + P + 1)}{\varepsilon^2}.$$ 
\end{rem}


\noindent{\bf II. {\em Coarse differentiability forces densities to be close.}} Let 
$f\!: \mD \to \ZZ^2$ be an $L$-bi-Lipschitz map defined on a Delone set $\mD \subset \ZZ^2$ 
satisfying the $2\mathbb{Z}$-property. Fix an integer $P \geq 1$, and let $S$ be a square of the 
form $S_k^P \subset R_{M,N}$, where $M$ is a multiple of $P$. We let $\gamma^*$ be the 
curve obtained by connecting (using line segments) points in $\hat{f}(\partial S)$ 
coming from consecutive points 
in $\partial S$. The curve $\gamma^*$ is closed though not necessarily simple. However, it contains the simple 
curve $\gamma = \gamma_S$ obtained by ``deleting short loops''. Notice that the bi-Lipschitz property of $\hat{f}$ 
easily implies that each loop has length at most $2 \hat{L}^3 M/P$. Therefore, if $P \geq 4 \hat{L}^4$, then 
$\gamma$ has length at least 
$$\frac{M\sqrt{2}}{\hat{L}} - \frac{4 \hat{L}^3 M}{P} \geq 4 (\sqrt{2}-1) \hat{L}^3 > 0.$$
In particular, it is well defined. We denote by $int(\gamma)$ (resp. $ext(\gamma)$) 
the closed, bounded (resp. unbounded) region of the plane determined by $\gamma$.  

We let 
$$\hat{S} :=  \big\{ \big( (k-1)M + i, j \big) \!: i,j \mbox{ in } [\![ 0, M-1 ]\!] \big\}.$$
This corresponds to the set of all points with integer coordinates in the region (square) 
bounded by the points of $S_k^P$, except for those in the upper and the right sides of 
the square. We call such a subset the {\em lower-left corner} of the corresponding square.

Given $\varepsilon > -1$, we let $(1+\varepsilon) \hat{S}$ be the set of all 
points with integer coordinates lying in the square having the same center as 
$S$ though side of length $(1+\varepsilon)M$. We also denote by $\mathcal{S}_1$ 
the unit square in $\mathbb{R}^2$, and by $(1+\varepsilon)\mathcal{S}_1$ the 
corresponding homothetic copy. 

\vspace{0.2cm}

\begin{lem} \label{int-ext}
{\em Given $L \geq 1$ and $\varepsilon > 0$, there exists $P_0$ such that the 
following holds: If $f : \mD \to \ZZ^2$ is $L$-bi-Lipschitz and $P \geq P_0$, then

\vspace{0.1cm}

\noindent $\mathrm{(i)}$ no point of $\hat{f} \big( \ZZ^2 \setminus (1+\varepsilon) \hat{S}) \big)$ 
lies in $int(\gamma)$;

\vspace{0.1cm}

\noindent $\mathrm{(ii)}$ all points in $\hat{f} \big( (1-\varepsilon)\hat{S} \big)$ are contained 
in $int(\gamma)$.}
\end{lem}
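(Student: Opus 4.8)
Write $\hat L:=6L$, so that $\hat f$ is $\hat L$-bi-Lipschitz by Lemma~\ref{extension}, and recall $M$ is a multiple of $P$, hence $M\ge P$. Two elementary estimates will be used throughout. (a) Each edge of $\gamma$ joins two points of $\hat f(\partial S)$ at distance $\le\hat LM/P$ (consecutive points of $\partial S$ are $M/P$ apart), so $\gamma$ — like all of $\gamma^*$ — lies in the $\tfrac{\hat LM}{P}$-neighbourhood of $\hat f(\partial S)$. (b) A lattice point outside $(1+\varepsilon)\hat S$, or inside $(1-\varepsilon)\hat S$, lies at Euclidean distance $\ge\varepsilon M/2$ from the $M\times M$ square carrying $S$, so its $\hat f$-image is at distance $\ge\varepsilon M/(2\hat L)$ from $\hat f(S)\supseteq\hat f(\partial S)$, hence at distance $\ge\varepsilon M/(2\hat L)-\hat LM/P$ from $\gamma$; this is $\ge\varepsilon M/(4\hat L)>0$ once $P\ge P_0$, where we will require $P_0\ge\max\{8\hat L^4,\,8\hat L^2\varepsilon^{-1}\}$ (the bound $M\ge P\ge P_0$ is used here). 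Since $(1\pm\varepsilon)\hat S$ only grow resp.\ shrink as $\varepsilon$ increases, both (i) and (ii) are formally weakest for small $\varepsilon$, so I may and do assume $\varepsilon$ is as small as convenient. \emph{Proof of (i):} given $q\in\ZZ^2\setminus(1+\varepsilon)\hat S$ with $\hat f(q)\in int(\gamma)$, use that the complement of a solid rectangular block of lattice points is edge-connected and unbounded to pick an infinite edge-path $q=q_0,q_1,\dots$ inside $\ZZ^2\setminus(1+\varepsilon)\hat S$ with $\|q_n\|\to\infty$; concatenating the segments $[\hat f(q_n),\hat f(q_{n+1})]$ (each of length $\le\hat L$) gives a continuous path from $\hat f(q)\in int(\gamma)$ which, $\hat f$ being proper, eventually enters $ext(\gamma)$ and so meets $\gamma$, yet every point of it is within $\hat L$ of some $\hat f(q_n)$, hence by (b) at distance $\ge\varepsilon M/(4\hat L)-\hat L>0$ from $\gamma$ for $P\ge P_0$ — contradiction.

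For (ii), first reduce to an area bound. Since $(1-\varepsilon)\hat S$ is edge-connected, the union $\tilde A$ of the segments $[\hat f(v),\hat f(v')]$ over edge-adjacent $v,v'\in(1-\varepsilon)\hat S$ is connected and lies within $\hat L$ of $\hat f((1-\varepsilon)\hat S)$, hence by (a)--(b) is disjoint from the Jordan curve $\gamma$; therefore $\tilde A$ — and with it $\hat f((1-\varepsilon)\hat S)$ — lies entirely in $int(\gamma)$ or entirely in $ext(\gamma)$, and it suffices to exclude the second case. If it held, then combined with (i) one gets $\{q\in\ZZ^2:\hat f(q)\in int(\gamma)\}\subseteq(1+\varepsilon)\hat S\setminus(1-\varepsilon)\hat S$, a set of at most $4\varepsilon M^2+O(M)$ lattice points; since $\hat f(\ZZ^2)$ is a Delone set with constants depending only on $\hat L$ and $\mathrm{per}\big(int(\gamma)\big)=\mathrm{length}(\gamma)\le\mathrm{length}(\gamma^*)\le4\hat LM$, the standard lower density bound for Delone sets forces $$\mathrm{area}\big(int(\gamma)\big)\le C_{\hat L}\,(\varepsilon M^2+M).$$ So the whole lemma comes down to the complementary estimate $\mathrm{area}\big(int(\gamma)\big)\ge c_{\hat L}M^2$ with $c_{\hat L}>0$ depending only on $\hat L$, which for $\varepsilon<c_{\hat L}/(2C_{\hat L})$ and $M$ large contradicts the display and finishes the proof.

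The plan for this area bound — which is the point where the hypothesis is genuinely used, and which I expect to be the main obstacle — is as follows; it is the quantitative assertion that a bi-Lipschitz map cannot \emph{crush} the square $S$, and it cannot be extracted from the behaviour of $\hat f$ on $\partial S$ alone (a bi-Lipschitz image of a circle can enclose arbitrarily little area). First pass from $\gamma$ to $\gamma^*$: the sub-loops deleted in passing from $\gamma^*$ to $\gamma$ lie on $\gamma^*$, have diameter $\le\hat L^3M/P$ and total length $\le4\hat LM$, so by the isoperimetric inequality the total area they enclose is $\le2\hat L^4M^2/(\pi P)\le\tfrac12 c_{\hat L}M^2$ for $P\ge P_0$; hence $\mathrm{area}\big(int(\gamma)\big)\ge\mathrm{area}(\text{bounded region of }\gamma^*)-\tfrac12 c_{\hat L}M^2$, and it is enough to bound the latter below by $\tfrac32 c_{\hat L}M^2$. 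Next, triangulate the solid square $S$ by the $(M/P)$-grid and let $F_S\colon S\to\RR^2$ be the piecewise-affine interpolation of $\hat f$ on the $(P+1)^2$ grid points, so that $F_S|_{\partial S}$ traverses $\gamma^*$. Two features of $F_S$ matter: it is $O(\hat L)$-Lipschitz (each grid triangle, of legs $M/P$, maps to a triangle of diameter $\le\hat LM/P$), and it has multiplicity bounded by a constant $N_{\hat L}$ — if $F_S(x)=F_S(x')$, picking vertices $v,v'$ of the triangles of $x,x'$ yields $\|\hat f(v)-\hat f(v')\|\le 2\hat LM/P$, so $\|v-v'\|\le 2\hat L^2M/P$, forcing all preimages of a point into $O(\hat L^4)$ triangles. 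Consequently $\mathrm{area}(\text{bounded region of }\gamma^*)\ge\tfrac1{N_{\hat L}}\int_{\RR^2}\big|w(\gamma^*,\cdot)\big|\ge\tfrac1{N_{\hat L}}\big|\int_S\det DF_S\big|$ (the second factor is the signed area enclosed by $\gamma^*$, by Stokes), so it remains to show $\big|\int_S\det DF_S\big|=\big|\sum_{\triangle}\pm\,\mathrm{area}(F_S\triangle)\big|\gtrsim_{\hat L}M^2$.

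Here the bi-Lipschitz condition gives, for the lower-left grid triangle at $u$, $\mathrm{area}(F_S\triangle)\ge\tfrac{M^2}{2P^2\hat L^2}\,|\sin\theta_u|$, where $\theta_u$ is the angle between the discrete partials $\hat f(u+(M/P,0))-\hat f(u)$ and $\hat f(u+(0,M/P))-\hat f(u)$, so the problem reduces to the \emph{averaged transversality} estimate $\sum_u|\sin\theta_u|\gtrsim P^2$, together with control of the orientations $\pm$ preventing cancellation in the signed sum. This is the technical heart: it expresses that the $\hat f$-images of the $P+1$ horizontal and $P+1$ vertical grid lines of $S$ — each a path running distance $\ge M/\hat L$ inside a region of diameter $\le\sqrt2\,\hat LM$ — cannot be mutually tangent on the average (a single pair can, but a positive-density set of cells forced to be near-degenerate propagates, by the argument that adjacent cells with both triangles collinear have all four image-vertices on one line, into $\gtrsim P$ collinear $\tfrac{M}{P\hat L}$-separated points, exceeding the available diameter once $P\gtrsim\hat L^2$). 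Granting this estimate — and I expect proving it cleanly, as a density bound on the set of near-degenerate cells, to be the real work — the chain of inequalities closes to $\mathrm{area}\big(int(\gamma)\big)\ge c_{\hat L}M^2$, which completes (ii).
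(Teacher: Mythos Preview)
Your treatment of (i) is correct and is essentially the paper's argument: where the paper picks a point $x\notin\hat S$ with $\hat f(x)\in int(\gamma)$ at \emph{maximal} distance from $\hat S$ and observes that one of its four lattice neighbours is strictly farther, you instead run an edge-path to infinity inside $\ZZ^2\setminus(1+\varepsilon)\hat S$; both give $dist(\hat f(x),\gamma)\le\hat L$ and then the same distance chain $dist(x,\partial\hat S)\le 3\hat L^2 M/P\le\varepsilon M$, hence the same $P_0\asymp\hat L^2/\varepsilon$.

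For (ii), however, you take a dramatically harder route than necessary, and you do not complete it. After your (correct) connectivity reduction --- $\hat f((1-\varepsilon)\hat S)$ is a connected set disjoint from $\gamma$, hence lies entirely in $int(\gamma)$ or entirely in $ext(\gamma)$ --- you attempt to rule out the second alternative via a lower bound $\mathrm{area}(int(\gamma))\ge c_{\hat L}M^2$, which you propose to extract from the signed-area formula for the piecewise-affine interpolation $F_S$ together with an ``averaged transversality'' estimate $\sum_u|\sin\theta_u|\gtrsim P^2$. But you do not prove this estimate; you explicitly flag it as ``the real work'', and the heuristic you sketch (propagating near-collinearity along grid lines) does not settle it. Worse, even granting it, you would still need to control the \emph{signs} in $\sum_\triangle\pm\,\mathrm{area}(F_S\triangle)$ to prevent cancellation, which is a separate issue you only allude to. As written, (ii) has a genuine gap.

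The paper avoids all of this. It first notes that a soft compactness argument already proves the (non-quantitative) lemma: rescaling by $1/M$ and passing to an Arzel\`a--Ascoli subsequential limit produces an $\hat L$-bi-Lipschitz \emph{homeomorphism} $F$ of the unit square $\mathcal S_1$, and the failure of (ii) would force some point of $(1-\varepsilon)\mathcal S_1$ to be mapped by $F$ outside $F(\mathcal S_1)$, which is absurd. For the quantitative version with the explicit $P_0=\max\{4\hat L^4,3\hat L^2/\varepsilon\}$, the paper says only that (ii) ``proceeds analogously dealing with $f^{-1}$ instead of $f$'': one runs the extremal argument of (i) with the roles of domain and target exchanged, so that $int(\gamma)$ plays the role of $\hat S$, the curve $\gamma$ that of $\partial S$, and $\hat f^{-1}$ that of $\hat f$. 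Either route is short and uses no area estimate, no degree theory, and no transversality; your approach, even if the gap were filled, would give a weaker non-explicit $P_0$ and a substantially longer argument.
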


\vspace{0.1cm}

This lemma can be easily shown by contradiction just by renormalising 
and passing to the limit (along a subsequence) using a variation of the 
Arzela-Ascoli theorem. Indeed, such an argument provides a limit 
homeomorphism $F$ from the unit square $\mathcal{S}_1$ as well as:

\noindent -- In case (i), a point in the exterior of $(1+\varepsilon)\mathcal{S}_1$ 
which is mapped by $F$ inside $F(\mathcal{S}_1)$;

\noindent -- In case (ii), a point in $(1-\varepsilon) \mathcal{S}_1$ which is mapped 
by $F$ into a point outside $F(\mathcal{S}_1)$.

\noindent In each case, this is certainly impossible, since $F$ is an homeomorphism. 

Despite this simple argument, it is better to give a slightly more involved proof 
that yields a quantitative estimate for $P_0$ in terms of $L$ and $\varepsilon$. 

\vspace{0.4cm}

\noindent{\em Proof of Lemma \ref{int-ext}.} We claim that the lemma holds for 
$P_0 :=\max \big\{ 4 \hat{L}^4, 3 \hat{L}^2 / \varepsilon \big\}$. 

For (i), let $x \in \ZZ^2 \setminus \hat{S}$ be a point that is mapped by $\hat{f}$ inside 
$int(\gamma)$ and lies at a maximal distance of $\hat{S}$ among these points. (Notice that,  
by the bi-Lipschitz property and the Delone condition, only finitely many points map into $int(\gamma)$.) 
We claim that $dist (\hat{f}(x), \gamma) \leq \hat{L}$. Otherwise, the 
closed ball of center $\hat{f}(x)$ and radius $\hat{L}$ would be contained in $int(\gamma)$. 
This ball contains the image under $\hat{f}$ of the points $x - (1,0), x+ (1,0), x - (0,1), x+ (0,1)$. 
However, among these points, at least one lies at distance of $\hat{S}$ strictly larger than that of 
$x$, which contradicts the choice of $x$. 

Now, it is obvious from the construction that every point in $\gamma$ lies at distance 
$\leq \frac{\hat{L} M}{P}$ from some point of the form $\hat{f}(y)$, where $y \in \partial S$.  
Therefore, 
$$\frac{\| x-y \|}{\hat{L}} 
\leq 
\big\| \hat{f}(x) - \hat{f}(y) \big\| 
\leq 
\hat{L} + \frac{\hat{L}M}{P} = 
\hat{L} \left( 1 + \frac{M}{P} \right),$$ 
hence
$$dist (x,\partial \hat{S}) 
\leq 
1 + \| x-y \| 
\leq 
\hat{L}^2 \left( 2 + \frac{M}{P} \right) \leq \frac{3 \hat{L}^2 M}{P} \leq \varepsilon M,$$
where the last inequality holds provided $P \geq P_0$.

The proof of (ii) proceeds analogously dealing with $f^{-1}$ instead of $f$. 
$\hfill\square$

\vspace{0.25cm}

\begin{rem} It is an open problem whether every bi-Lipschitz map defined on a Delone subset of the plane 
can be extended into a bi-Lipschitz homeomorphism of the whole plane (see \cite[Question 4.14.(ii)]{alestalo}). 
Certainly, having an affirmative answer for (a quantitative version of) this question would yield another 
proof of the preceding lemma. The estimates given above are, however, enough for our purposes.
\end{rem}

\vspace{0.2cm}

The next elementary lemma will be needed when comparing cardinalities of points 
enclosed by curves each of which is an almost translated copy of the other one. 

\begin{lem} \label{isoper}
{\em If $\gamma$ is a rectifiable curve in $\RR^2$ of $\mathrm{length}(\gamma) \geq 4$ and 
$1 \leq T \leq \frac{\mathrm{length}(\gamma)}{4}$, then}
$$
\big| \{x\in \ZZ^2: d(x,\gamma)\leq T \} \big| \leq 25 \hspace{0.1cm} T \hspace{0.1cm} \mbox{length} (\gamma).
$$
\end{lem}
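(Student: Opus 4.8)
The plan is to cover the $T$-neighborhood of $\gamma$ by boxes of a fixed size and count lattice points box by box. First I would subdivide $\gamma$ into consecutive arcs $\gamma_1, \ldots, \gamma_m$ each of length exactly $T$, except possibly the last one which has length in $(0, T]$; this is possible since $\mathrm{length}(\gamma) \geq 4 \geq T$, and the number of pieces is $m = \lceil \mathrm{length}(\gamma)/T \rceil \leq \mathrm{length}(\gamma)/T + 1 \leq 2\,\mathrm{length}(\gamma)/T$, using $T \leq \mathrm{length}(\gamma)/4 < \mathrm{length}(\gamma)$. Each arc $\gamma_s$ has diameter at most $T$ (its length), so the set of points in $\RR^2$ at distance $\leq T$ from $\gamma_s$ is contained in a ball of radius $\leq \frac{3T}{2}$ centered at any point of $\gamma_s$; hence it is contained in an axis-parallel square of side $3T$.

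Next I would bound the number of integer points in a square of side $3T$. A square of side $s$ in $\RR^2$ contains at most $(s+1)^2$ points of $\ZZ^2$; with $s = 3T$ and $T \geq 1$ this gives at most $(3T+1)^2 \leq (4T)^2 = 16 T^2$ lattice points, so at most $16T^2 \leq 16 T \cdot \mathrm{length}(\gamma)$ using again $T \leq \mathrm{length}(\gamma)$ (in fact $T \leq \mathrm{length}(\gamma)/4$). Since every point at distance $\leq T$ from $\gamma$ lies at distance $\leq T$ from at least one arc $\gamma_s$, the neighborhood of $\gamma$ is the union of the $m$ neighborhoods of the arcs, and therefore
$$
\big| \{ x \in \ZZ^2 : d(x,\gamma) \leq T \} \big|
\;\leq\; \sum_{s=1}^{m} \big| \{ x \in \ZZ^2 : d(x,\gamma_s) \leq T \} \big|
\;\leq\; m \cdot 16 T^2
\;\leq\; \frac{2\,\mathrm{length}(\gamma)}{T} \cdot 16 T^2
\;=\; 32 \, T \, \mathrm{length}(\gamma),
$$
which is even better than the claimed bound of $25\,T\,\mathrm{length}(\gamma)$; a marginally more careful counting of lattice points in a square of side $3T$ (namely at most $(3T+1)^2 \leq \frac{25}{2} T^2$ once $T \geq 2$, and handling $1 \leq T < 2$ separately, or simply using that each arc's neighborhood meets at most $\lceil 3T+1\rceil^2$ columns-by-rows) pushes the constant down to $25$.

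I do not expect any real obstacle here; the only mildly delicate point is being careful that $m$ is at most a constant times $\mathrm{length}(\gamma)/T$ rather than $\mathrm{length}(\gamma)/T$ exactly (the "+1" from the ceiling must be absorbed, which is where the hypothesis $T \leq \mathrm{length}(\gamma)/4$ and $\mathrm{length}(\gamma) \geq 4$ are used), and squeezing the final numerical constant down to $25$ — if one is willing to be slightly more clever about covering (e.g.\ covering the $T$-neighborhood of an arc of length $T$ by a rectangle of dimensions roughly $T \times 3T$ aligned suitably, or just noting the neighborhood has area $O(T^2)$ and invoking a lattice-point-vs-area estimate), the constant $25$ comes out comfortably. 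Since the statement only asks for \emph{some} universal constant and $25$ is generous, I would present the box-covering argument above and simply verify that the resulting constant is at most $25$.
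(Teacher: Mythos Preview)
Your approach is essentially the same as the paper's: cover the $T$-neighborhood of $\gamma$ by finitely many balls (or boxes) and count lattice points in each. The paper, however, obtains the constant $25$ in one clean line rather than via the sharpening you sketch at the end. It places points $x_1,\ldots,x_k$ on $\gamma$ at arc-length spacing $2T$ (so $k\leq 2+\mathrm{length}(\gamma)/(2T)\leq \mathrm{length}(\gamma)/T$, using exactly the hypothesis $T\leq \mathrm{length}(\gamma)/4$), notes that every lattice point within $T$ of $\gamma$ lies in some ball $B(x_i,2T)$, and bounds each such ball by $(4T+1)^2\leq (5T)^2=25T^2$ lattice points since $T\geq 1$; the product gives $25\,T\,\mathrm{length}(\gamma)$ directly.

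One small slip in your write-up: the $T$-neighborhood of an arc $\gamma_s$ of length $T$ is \emph{not} contained in the ball of radius $3T/2$ about \emph{any} point of $\gamma_s$ (take $\gamma_s$ a straight segment and center at an endpoint: the far end of the neighborhood is at distance $2T$). It is contained in such a ball centered at the arc-length midpoint of $\gamma_s$, which is what you need; with that fix your argument goes through. If you also tighten $m\leq \lceil \mathrm{length}(\gamma)/T\rceil \leq \tfrac{5}{4}\,\mathrm{length}(\gamma)/T$ (again from $T\leq \mathrm{length}(\gamma)/4$), your version already yields $20\,T\,\mathrm{length}(\gamma)$ without any case splitting.
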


\begin{proof}
Let $x_1,\ldots, x_k$ be points in $\gamma$ such that every $x\in \gamma$ has distance $\leq T$ 
to at least one of the points $x_i$.  Notice that we can take such a $k\in \mathbb{N}$ satisfying 
$$k \leq 2 + \frac{\mbox{length}(\gamma)}{2T} \leq \frac{\mbox{length}(\gamma)}{T}.
$$
If $x\in \ZZ^2$ satisfies $d(x,\gamma)\leq T$, then $\|x-x_i\|\leq 2T$ holds for some 
$1\leq i\leq k$. Therefore, 
$$
\big\{ x\in\ZZ^2: d(x,\gamma)\leq T \big\}
\subseteq 
\bigcup_{i=1}^k \big\{ x\in \ZZ^2: \|x-x_i \|\leq 2T \big\}. 
$$
Thus,
\begin{eqnarray*}
\big| \{x\in\ZZ^2: d(x,\gamma)\leq T\} \big| 
\leq k (4T+1)^2
\leq \frac{\mbox{length}(\gamma)}{T} (5T)^2 
= 25 \hspace{0.1cm} T \hspace{0.1cm} \mbox{length} (\gamma),
\end{eqnarray*}
which finishes the proof.
\end{proof}

We can now state and prove the main argument involving local densities of points of $\mD$ 
via comparison along the images.

\begin{prop} \label{densidad}
{\em Given $L \geq 1$ and $1 \geq d > d' > 0$, there exist a positive $\varepsilon < 1$ and integers 
$P_1,M_1$ such that the following holds: Let $\mD$ be a Delone set satisfying the $2\mathbb{Z}$-property, 
and let $f \!: \mD \to \mathbb{Z}^2$ be $L$-bi-Lipschitz and 
surjective. Assume that for $P \geq P_1$, $N \geq 1$ and 
$M \geq M_1$ a multiple of $P$, some square $S := S_k^P \subset R_{M,N}$, with 
$1 \leq k < 2M$ is such that every 
$x \in S$ satisfies (\ref{eq-distance}), and denote $S' := S_{k+1}^P$. If $\hat{S} \cap \mD$ contains 
$\geq d M^2$ (resp. $\leq d' M^2$) points and $\hat{S}' \cap \mD$ contains $\leq d' M^2$ (resp. 
$\geq dM^2$ points), then $f$ cannot be $L$-bi-Lipschitz.}
\end{prop}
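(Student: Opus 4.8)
The plan is to derive a contradiction by comparing the number of points of $\mD$ in $\hat S$ and $\hat S'$ with the number of their images under $\hat f$, using that $\hat f$ translates one curve to approximately the other. First I would apply Proposition~\ref{prop-regularidad} (or rather the hypothesis (\ref{eq-distance}) directly) to conclude that the curve $\gamma_S$ and the curve $\gamma_{S'}$ are, up to an error that can be made $\leq \varepsilon M$ uniformly (choosing $P \geq P_1$ and $M \geq M_1$ large via Lemma~\ref{int-ext} and the coarse-differentiability estimate), translates of one another by the vector $v^f_{M,N}/(2N)$: more precisely, $\hat f(\partial S') $ is obtained from $\hat f(\partial S)+ \tfrac{1}{M}\langle\text{stuff}\rangle$ within $\varepsilon M$ of a genuine translate. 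Hence $int(\gamma_{S'})$ is contained in an $\varepsilon M$-neighbourhood of a translate of $int(\gamma_S)$, and vice versa.

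Next I would count. On the one hand, since $f$ (hence $\hat f$) is $L$-bi-Lipschitz and surjective onto $\ZZ^2$, the cardinality of $\hat f(\hat S \cap \mD)$ differs from $|\hat S \cap \mD|$ by a controlled amount, and by Lemma~\ref{int-ext} these images fill essentially all integer points of $int(\gamma_S)$: precisely, $int(\gamma_S)$ contains $\hat f((1-\varepsilon)\hat S)$ and is disjoint from $\hat f(\ZZ^2\setminus(1+\varepsilon)\hat S)$, so the number of integer points enclosed by $\gamma_S$ lies between (roughly) $|(1-\varepsilon)\hat S\cap \mD|$ and $|(1+\varepsilon)\hat S \cap \mD|$, up to the Lipschitz distortion. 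By the Delone property and $L$-bi-Lipschitzness, $|(1\pm\varepsilon)\hat S\cap\mD| = |\hat S\cap\mD| \pm O(\varepsilon M^2)$ with an explicit constant depending only on $L$. So the integer-point count of $int(\gamma_S)$ is $|\hat S\cap\mD| \pm C\varepsilon M^2$, and similarly for $\gamma_{S'}$ with $|\hat S'\cap\mD|$.

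On the other hand, since $\gamma_{S'}$ is (within $\varepsilon M$ in Hausdorff distance) a translate of $\gamma_S$, and translation preserves the number of enclosed integer points exactly, the two enclosed-integer-point counts differ by at most the number of integer points within distance $\varepsilon M$ of $\gamma_S$. Here I would invoke Lemma~\ref{isoper}: $\gamma_S$ has length $\leq \hat L\cdot(\text{perimeter of }S)\le 4\hat L M$ (and $\geq$ a positive multiple of $M$, so the hypotheses of the lemma are met with $T=\varepsilon M$), giving at most $25\,(\varepsilon M)(4\hat L M)=100\,\hat L\,\varepsilon M^2$ such points. Combining, we get
$$\big|\,|\hat S\cap\mD| - |\hat S'\cap\mD|\,\big| \leq C'\varepsilon M^2$$
for an explicit $C'=C'(L)$. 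But by hypothesis one of the two sets has $\geq dM^2$ points and the other $\leq d'M^2$ points, so the left side is $\geq (d-d')M^2$. Choosing $\varepsilon := (d-d')/(2C')$ at the outset makes this impossible, which is the desired contradiction; the thresholds $P_1,M_1$ are then exactly those needed so that all the error terms above (from Lemma~\ref{int-ext}, from coarse differentiability, and from discretization) are indeed bounded by their claimed multiples of $\varepsilon M$ or $\varepsilon M^2$.

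The main obstacle I anticipate is bookkeeping the error terms so that they genuinely scale like $\varepsilon M^2$ with constants depending only on $L$ (not on $M,N,P$): in particular, one must be careful that ``$\hat f(\partial S')$ is a near-translate of $\hat f(\partial S)$'' follows from the pointwise estimate (\ref{eq-distance}) applied at the $(P+1)^2$ lattice points of $S$ together with $\hat L$-Lipschitz interpolation along the connecting segments — the interpolation contributes an error $\sim \hat L M/P$, which is why $P$ must be taken large — and that the passage from ``integer points in $int(\gamma_S)$'' to ``$|\hat S\cap\mD|$'' correctly accounts both for the loops deleted in forming $\gamma_S$ from $\gamma^*$ (each of length $\leq 2\hat L^3 M/P$, hence enclosing $\leq C\hat L^6 M^2/P^2$ integer points, negligible for large $P$) and for the $2\ZZ$-property, which guarantees $\mD$ is dense enough that counting points of $\mD$ versus all integer points only costs a bounded factor.
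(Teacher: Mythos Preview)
Your proposal is correct and follows essentially the same route as the paper: use Lemma~\ref{int-ext} to sandwich $|int(\gamma_S)\cap\ZZ^2|$ between $|(1\mp\varepsilon)\hat S\cap\mD|$, use the hypothesis~(\ref{eq-distance}) to see that $\gamma_{S'}$ is a translate of $\gamma_S$ up to $\varepsilon M$, invoke Lemma~\ref{isoper} to control the boundary layer, and conclude $(d-d')M^2\le C(L)\,\varepsilon M^2$. The paper carries this out with the specific constants $\varepsilon<\frac{d-d'}{20(2+5L)}$, $P_1=P_0$, $M_1\ge\max\{2\hat L,1/\varepsilon\}$; your anticipated bookkeeping concerns (interpolation error $\hat L M/P$, deleted loops) are exactly what the choice $P_1=P_0$ from Lemma~\ref{int-ext} absorbs, and your worry about the $2\ZZ$-property is unnecessary since the counts $|\hat S\cap\mD|$, $|\hat S'\cap\mD|$ are given directly by hypothesis. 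One small slip: translation by $v^f_{M,N}/(2N)$ need not be by an integer vector, so it does not preserve enclosed-integer-point counts \emph{exactly}; but the discrepancy is $O(\mathrm{length}(\gamma))=O(\hat L M)$, which is swallowed by the $\varepsilon M^2$ error budget, and the paper glosses over this in the same way.
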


\begin{proof} We will show that the claim holds for all $\varepsilon \!< \! \frac{d-d'}{20(2+5L)}$,  
$M_1 \geq \max \{2\hat{L}, 1/\varepsilon\}$ and $P_1=P_0$, where $P_0$ is given by 
Lemma \ref{int-ext}. To do this, we will suppose that $|\hat{S} \cap \mD| \geq dM^2$ 
and $|\hat{S}' \cap \mD| \leq d'M^2$, the other case being analogous.

We proceed by contradiction. Assuming that $f$ is $L$-bi-Lipschitz, we use Lemma \ref{int-ext}. 
By (ii), for $\gamma := \gamma_S$, the set $f(\mD \cap (1-\varepsilon)\hat{S}) \subset \ZZ^2$ contains  
$\geq d M^2 - 4 (\varepsilon M + 1)^2$, all lying in $int(\gamma)$:
$$\big| int(\gamma) \cap \ZZ^2\big| \geq dM^2 - 16\varepsilon M^2.$$
By (i) and the surjectivity of $f \! : \mathcal{D} \to \mathbb{Z}$, 
for $\gamma' := \gamma_{S'}$, the set $int(\gamma') \cap \ZZ^2$ is contained in 
$f( \mD \cap (1+\varepsilon)\hat{S}')$, hence its cardinality is bounded from above by 
$d'M^2 + 4 \varepsilon M (M+1) + 4(\varepsilon M + 1)^2$:
$$\big| int(\gamma') \cap \ZZ^2 \big| \leq d'M^2 + 24 \varepsilon M^2.$$

We claim that points of $int (\gamma)$ must lie in $int(\gamma')$ after translation 
by $\frac{v_{M,N}}{2N}$, except perhaps for those which are moved into points that 
are $\varepsilon M$-close to $\gamma'$. Indeed, $\gamma$ (hence $int(\gamma)$) is 
determined by the image $\hat{f} (\partial S)$, hence by points of the form $\hat{f} (x_{i,j}^k)$ 
for which (\ref{eq-distance}) holds. Obviously, similar arguments apply to $\gamma'$. 

We next claim that we may use the preceding lemma to conclude that the number 
of points that move into points $\varepsilon M$-close to $\gamma'$ is at most 
$$ 25 \varepsilon M \mbox{length}(\gamma') \leq 100 \varepsilon LM^2.$$ 
Indeed, the choices of $P$ and $M$ yield  
$$\mbox{length}(\gamma') \geq 2M/\hat{L}  > 4 \quad \mbox{ and } 
\quad \varepsilon M \leq 1 /4 \leq \mbox{length}(\gamma')/4,$$ 
thus fulfilling the hypothesis of Lemma \ref{isoper}. 

The preceding estimates force  
$$d M^2 - 16\varepsilon M^2 - 100 \varepsilon LM^2 \leq d' M^2 + 24\varepsilon M^2,$$
that is, 
$$d \leq d' + (40 + 100L) \varepsilon.$$
However, this is impossible due to the choice of $\varepsilon$. 
\end{proof}

\vspace{0.3cm}

We next put together Propositions \ref{prop-regularidad} 
and \ref{densidad} into a single one.

\vspace{0.1cm} 

\begin{prop} \label{expandiendo}
{\em Given $L \geq 1$ and $1 \geq d > d' > 0$, there exist $\lambda > 0$ 
and positive integers $M_*,N_*,P_*$ such that the following holds: Let 
$\mD$ be a Delone set satisfying the $2\mathbb{Z}$-property, and let 
$f \!: \mD \to \mathbb{Z}^2$ be $L$-bi-Lipschitz and surjective. Assume 
that for $M \geq M_*$ and $N \geq N_*$, with $M$ a multiple of $P_*$, there 
are two consecutive squares $S_k^{P_*},S_{k+1}^{P_*}$ of $R_{M,N}$ such 
that the lower-left corner of one of them contains at least $dM^2$ points of 
$\mD$, and the lower-left corner of the other one has no more than 
$d'M^2$ points of $\mD$. Then there must exist a point 
$x \in \mD \cap R_{M,N}$ of the form $x_{i,j}^k$ such that either}
$$\frac{\big\|  f(x+(M/P,0)) - f(x) \big\|}{M/P} 
\geq (1 + \lambda) \frac{\big\| f(2MN,0) - f(0,0) \big\|}{2MN}$$
if $x + (M/P,0)$ belongs to $\mD$, or 
$$\frac{\big\| f(x+(1+M/P,0)) - f(x) \big\|}{1+M/P} 
\geq (1 + \lambda) \frac{\big\| f(2MN,0) - f(0,0) \big\|}{2MN}$$
otherwise.
\end{prop}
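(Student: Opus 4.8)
As the sentence immediately preceding the statement indicates, the proposition is nothing more than the concatenation of Propositions \ref{prop-regularidad} and \ref{densidad}, and the plan is to deduce it by a short argument by contradiction, the only real work being the bookkeeping of the constants. Given $L\ge 1$ and $1\ge d>d'>0$, one first applies Proposition \ref{densidad} to obtain a threshold $\varepsilon\in(0,1)$ — depending only on $L,d,d'$; any $\varepsilon<\frac{d-d'}{20(2+5L)}$ will do — together with integers $P_1$ and $M_1$. Since $\varepsilon$ is now fixed, one feeds $L$, this $\varepsilon$ and any integer $P\ge P_1$ into Proposition \ref{prop-regularidad}, which returns $\lambda>0$ and integers $M_0$ and $N_0$, explicit admissible values being recorded in Remark \ref{primera-estimacion}. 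One then sets $P_*:=P$, $N_*:=\max\{N_0,1\}$, and lets $M_*$ be any multiple of $P_*$ with $M_*\ge\max\{M_0,M_1\}$. The only point to verify at this stage is that a single tuple $(\lambda,P_*,M_*,N_*)$ is simultaneously admissible for both propositions, which is immediate from the explicit estimates above.

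Suppose now, towards a contradiction, that the conclusion fails, i.e. that no point of $\mD\cap R_{M,N}$ of the form $x_{i,j}^{k}$ satisfies the stated expansion inequality in either of its two forms. The negation of this is precisely that, for every such point lying in $\mD$,
$$\frac{\big\|f(x_{i+1,j}^{k})-f(x_{i,j}^{k})\big\|}{M/P}\le(1+\lambda)\frac{\|v_{M,N}^{f}\|}{2MN}$$
whenever $x_{i+1,j}^{k}\in\mD$, and the analogous inequality with the extra shift $(1,0)$ when only $x_{i+1,j}^{k}+(1,0)\in\mD$. But this is exactly the hypothesis of Proposition \ref{prop-regularidad}, applied to the restriction of $f$ to $\mD\cap R_{M,N}$ (a subset of $R_{M,N}$ that again has the $2\ZZ$-property, and for which the vector $v_{M,N}^{f}$ is unchanged since the corners $(0,0)$ and $(2MN,0)$ still belong to it). That proposition therefore furnishes a square $S=S_{k}^{P_*}\subset R_{M,N}$ every point of which satisfies the coarse-differentiability estimate (\ref{new-epsilon}), that is, (\ref{eq-distance}) with the $\varepsilon$ fixed above.

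Finally one feeds $S$ into Proposition \ref{densidad}: taking $S'=S_{k+1}^{P_*}$ to be its successor and invoking the standing hypothesis that two consecutive squares of $R_{M,N}$ have lower-left corners whose cardinalities straddle $dM^2$ and $d'M^2$, all the assumptions of that proposition are met, whence $f$ cannot be $L$-bi-Lipschitz. This contradicts the standing assumption on $f$, and the contradiction yields the existence of a point $x_{i,j}^{k}$ with the asserted expansion. I do not anticipate any serious obstacle beyond the constant chase of the first paragraph, since both ingredients are already in hand; the one subtlety worth a second look is the alignment of the square produced by Proposition \ref{prop-regularidad} with the consecutive pair of squares carrying the density gap — it is exactly at this step that the density hypothesis is used, and one should make sure the two refer to the same pair.
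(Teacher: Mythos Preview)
Your outline is exactly the paper's own: the authors give no separate proof of Proposition~\ref{expandiendo} beyond the sentence ``We next put together Propositions~\ref{prop-regularidad} and~\ref{densidad} into a single one,'' and Remark~\ref{segunda-estimacion} confirms that the constants are obtained by chaining the estimates you name.

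The subtlety you flag at the end, however, is a genuine gap --- not in your argument, but in the literal statement of the proposition. Proposition~\ref{prop-regularidad} produces \emph{some} regular square $S_{k_*}^{P_*}$, with no control over which index $k_*$ arises; Proposition~\ref{densidad} needs the density discrepancy to occur precisely at the pair $S_{k_*}^{P_*}, S_{k_*+1}^{P_*}$. If the hypothesis of Proposition~\ref{expandiendo} is read as asserting the density gap at a \emph{single} specified pair $S_k^{P_*}, S_{k+1}^{P_*}$, there is no reason the regular square should land there, and the contradiction does not close. You are right to be suspicious.

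The resolution is that in every application the paper makes of Proposition~\ref{expandiendo} (Lemma~\ref{no-bilipschitz} and the construction in Section~III, and again in Section~V), the squares $S_1^{P_*},\ldots,S_{2N}^{P_*}$ are built to \emph{alternate} between density $\ge d$ and density $\le d'$, so \emph{every} consecutive pair exhibits the gap. Under that stronger standing hypothesis, whichever $k_*\in[\![1,2N-1]\!]$ Proposition~\ref{prop-regularidad} returns, the pair $S_{k_*}^{P_*}, S_{k_*+1}^{P_*}$ automatically satisfies the density condition of Proposition~\ref{densidad}, and your contradiction goes through verbatim. So either read the hypothesis of Proposition~\ref{expandiendo} as ``every consecutive pair of squares has the density gap'' (which is how it is always used), or add one line noting that this is the situation in all applications; with that adjustment your proof is complete and matches the paper's.
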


Roughly, the preceding Proposition says that if a Delone set $\mD$ with the $2\mathbb{Z}$-property 
maps onto $\mathbb{Z}^2$ by an $L$-bi-Lipschitz map $f$, then variations of the local density of 
$\mD$ force the Lipschitz constant of $f$ to increase when passing from a certain scale to a 
smaller one. By inductive application of this argument, we will contradict the Lipschitz 
condition of $f$ for appropriately constructed Delone sets.

\vsp

\begin{rem} \label{segunda-estimacion}
The estimates of Remark \ref{primera-estimacion} and those given in Lemma \ref{int-ext} and 
Proposition \ref{densidad} show that, given $L \geq 1$ and $1 \geq d > d' > 0$, the conclusion 
of Proposition \ref{expandiendo} holds for 
$$\lambda \leq \frac{(d-d')^3}{10^{10} L^7}, \qquad 
M_0 \geq \frac{10^{15} L^{11}}{(d-d')^4}, \qquad 
N_0 \geq \frac{10^{10} L^{10}}{(d-d')^4}.$$
\end{rem}

\vspace{0.35cm}


\noindent{\bf III. {\em Construction of the non-rectifiable, repetitive Delone set.}}  
We start by introducing a general recipe for constructing repetitive Delone subsets of $\ZZ^2$. 

\vspace{0.1cm}

Let $(F_n)_{n\geq 1}$ be a sequence of finite subsets of $\ZZ^2$ satisfying the following properties:
\begin{itemize}
\item[(\bf{F1})] $(0,0) \in F_n\subseteq F_{n+1}$, for every $n\geq 1$;

\item[\bf{(F2)}]  $\ZZ^2=\bigcup_{n\geq 1} F_n$;

\item[\bf{(F3)}] For every $n\geq 1$,  the set $F_{n+1}$ is a disjoint union of translated copies of $F_n$.   
\end{itemize}
The last condition yields a finite subset $\Gamma_n \subset F_{n+1}$ such that 
$$
F_{n+1}=\bigcup_{v\in \Gamma_n} (F_n+v).
$$ 

Assume that for each $n \geq1$, there exist $k_n \geq 1$ and a family of {\em patches} 
$\mP_{n,1},\ldots, \mP_{n,k_n}$ in $\{ 0,1 \}^{F_n}$ such that: 
\begin{itemize}
\item[\bf{(F4)}] $\mP_{n+1,k}|_{v+F_n} - v$ belongs to $\{\mP_{n,1},\ldots, \mP_{n,k_n}\}$ 
for all $v \in \Gamma_n$ and all $k \in [\![ 1,k_{n+1} ]\!]$;

\item[\bf{(F5)}]  For every $j \in [\![ 1, k_n ]\!]$ and $k \in [\![ 1, k_{n+1} ]\!]$, 
one has $\mP_{n+1,k}|_{v + F_n} = \mP_{n,j}$ for a certain $v\in \Gamma_n$;

\item[\bf{(F6)}]  $\mP_{n+1,1}|_{F_n} = \mP_{n,1}$.
\end{itemize}
By properties (F1), (F2) and (F6) above, the intersection 
$$
\bigcap_{n\geq 1} \big\{ D \in \{0,1\}^{\ZZ^2} \!: D|_{F_n} = \mP_{n,1} \big\}
$$ 
consists of a single point, which can be viewed as a subset $\mD$ of $\ZZ^2$.

\vspace{0.1cm}

\begin{lem}\label{lema-repetitivo}
The set $\mD$ is a repetitive Delone set.
\end{lem}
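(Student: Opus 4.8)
The plan is to verify the two defining conditions of a repetitive Delone set separately, relying on properties (F1)--(F6) together with the abstract construction of $\mD$.

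\textbf{Step 1: $\mD$ is Delone.} Since $\mD$ has the $2\mathbb{Z}$-property implicitly built into the patches (or at least is a subset of $\ZZ^2$), separation is immediate with $\rho = 1$ because $\mD \subset \ZZ^2$. For relative density, I would fix $n$ large enough that $F_n$ contains a ball $B_{\varrho}$ of some radius $\varrho$ around the origin (possible by (F2) since the $F_n$ exhaust $\ZZ^2$). Given any $z \in \RR^2$, round it to a nearby integer point, and use (F3) to write $\ZZ^2$ as a disjoint union of translates $F_n + v$, $v \in \Gamma_n$ (extended to a tiling of all of $\ZZ^2$, which one gets by letting $n \to \infty$ along the nested family); the integer point near $z$ lies in some translate $F_n + v$. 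Within that translate, $\mD$ restricted to $F_n + v$ is, by (F4), a translate of one of the finitely many patches $\mP_{n,1}, \dots, \mP_{n,k_n}$. The only subtlety is that a patch could in principle be empty or avoid a neighborhood of $z$; but because every point $(m,n)$ with $m$ even belongs to $\mD$ (the $2\mathbb{Z}$-property), every patch contains at least every other vertical line, so $\mD$ is $\sqrt{2}$-dense, say. Thus $\mD$ is relatively dense.

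\textbf{Step 2: $\mD$ is repetitive.} This is the heart of the matter. Fix $r > 0$. I want to produce $R = R(r)$ so that every ball $B_R$ of radius $R$ contains a translated copy of $B_r \cap \mD$. Choose $n$ so that $B_r \cap \mD$, translated to sit around the origin, is contained in $F_n$; then by (F1) and the definition of $\mD$ as the point with $D|_{F_n} = \mP_{n,1}$, the patch $\mathcal{P}_{n,1}$ determines $B_r \cap \mD$ (up to translation). Now take $N \geq n$. By (F5), the patch $\mP_{n,1}$ appears as $\mP_{N,k}|_{v+F_n}$ for some $v \in \Gamma_{N-1} \circ \cdots$ — more precisely, iterating (F5) down from level $N$ to level $n$, $\mathcal{P}_{n,1}$ occurs as a sub-patch of $\mP_{N,1}$ at some translate $v \in F_N$. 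Since $\mP_{N,1} = \mD|_{F_N}$, this means $B_r \cap \mD$ (up to translation) occurs inside $\mD \cap (F_N)$. To turn this into the ball statement, I take $R = R(r)$ to be the diameter of $F_N$ where $N = N(n)$ is the first level at which (F5) guarantees the occurrence — since $F_N$ is a finite set, $F_N$ is contained in a ball of radius $R$, and any ball $B_R$ of that radius, once it is large enough to contain a full translate $F_N + w$ of $F_N$ (which requires enlarging $R$ by another $\mathrm{diam}(F_N)$, harmless), contains $\mathcal{P}_{N,1}$ somewhere, hence contains $B_r \cap \mD$.

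\textbf{Main obstacle.} The delicate point is the logical direction of (F5): it says every level-$n$ patch appears inside every level-$(n{+}1)$ patch, which by induction gives that $\mathcal{P}_{n,1}$ appears inside $\mathcal{P}_{N,1}$ for all $N > n$, and moreover appears inside \emph{every} $\mathcal{P}_{N,k}$. Combined with (F3)'s tiling of $\ZZ^2$ by translates of $F_N$ and (F4)'s assertion that each such translate carries a level-$N$ patch, one concludes that $\mathcal{P}_{n,1}$ recurs with gaps bounded by roughly $\mathrm{diam}(F_N)$ throughout $\ZZ^2$. Making the quantifiers line up — ``every ball of radius $R$'' versus ``$\mD$ tiled by translates of $F_N$'' — requires choosing $R$ comfortably larger than $\mathrm{diam}(F_N)$ so that an arbitrary ball is guaranteed to swallow a whole tile; this is the only genuinely fiddly bookkeeping, and it is the step I would write out most carefully. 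Everything else reduces to unwinding definitions.
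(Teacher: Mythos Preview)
Your Step 2 has a real gap in how it handles the quantifier over $B_r$. Repetitivity, as defined in the paper, asks that for \emph{every} ball $B_r$ of radius $r$ (centered anywhere), the pattern $B_r \cap \mD$ occurs in every ball of radius $R(r)$. You write ``Choose $n$ so that $B_r \cap \mD$, translated to sit around the origin, is contained in $F_n$; then \ldots\ the patch $\mP_{n,1}$ determines $B_r \cap \mD$.'' Translating $B_r$ to the origin makes its \emph{support} lie in $F_n$, but it does not make its \emph{pattern} coincide with (or even appear inside) the restriction of $\mP_{n,1}$: the patch $\mP_{n,1} = \mD|_{F_n}$ records what $\mD$ looks like near the origin, and if $B_r$ was centered far away, the pattern $B_r \cap \mD$ need not occur anywhere in $\mP_{n,1}$. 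As written, your argument only shows that the origin-patch recurs in every large ball; to cover an arbitrary $B_r$ you would need $n$ to depend on its centre, destroying the uniformity of $R(r)$. Your ``Main obstacle'' paragraph focuses on the wrong quantifier --- the $B_R$ side, which is indeed just bookkeeping --- and misses this one.

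The paper closes the gap with a short but essential observation you omit: because $\mD \subset \ZZ^2$, only finitely many patch types $\mathcal{Q}_1,\ldots,\mathcal{Q}_m$ of diameter $2r$ appear in $\mD$ up to translation. Each $\mathcal{Q}_i$ occurs somewhere in $\mD$, hence inside $F_n$ for $n$ large (by (F2)), and therefore inside $\mP_{n,1} = \mD|_{F_n}$. A single $n$ then handles all of them simultaneously, after which your argument via (F5), (F4), and the tiling of $\ZZ^2$ by $F_N$-translates goes through to show that $\mP_{n,1}$ --- and with it every $\mathcal{Q}_i$ --- occurs in every ball of sufficiently large radius. Once you insert this finiteness step, the rest of your outline matches the paper's proof.
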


\begin{proof}
Fix $r>0$. Since $\mD$ is a subset of $\ZZ^2$, only 
finitely many patches $\mathcal{Q}_1,\ldots, \mathcal{Q}_m$ of diameter 
$2r$ appear (up to translation) in $\mD$.  Let $n \geq 1$ be such that the restriction 
of $\mD$ to $F_n$ ({\em i.e.} $\mP_{n,1}$) contains (translated copies of) all of the patches 
$\mathcal{Q}_1,\ldots, \mathcal{Q}_m$. Property (F5) above ensures that for a large-enough 
$R>0$, every ball of radius $R$ in $\mD$ cointains a translated copy of the patch $\mP_{n,1}$, 
hence a copy of each patch $\mathcal{Q}_1,\ldots,\mathcal{Q}_m$. Thus, every ball of 
radius $r$ appears in each ball of radius $R$. 
\end{proof}

\vspace{0.2cm}

In order to implement the strategy above, we need to specify our building blocks 
({\em i.e.} the patches along the construction). These will be constructed 
starting from two data, namely:

\begin{itemize}

\item A constant $L \geq 1$ (which will play the role of the Lipschitz constant to discard);

\item Two square patches $\mQ_1, \mQ_2$ in $\mathbb{Z}^2$ 
that have equal and even length-side but contain different number of points. We let 
$d_i$ be the density of points in the lower-left corner of $\mQ_i$, the notation being 
such that $d_2 > d_1$. We also assume that both patches contain all boundary points 
and satisfy the $2\mathbb{Z}$-property when placed centered at the origin. 

\end{itemize}
 
\noindent Given these data, fix $d_1',d_2'$ such that $d_2 > d_2' > d_1' > d_1$.  Let 
$\lambda, M_*,N_*,P_*$ be the constants provided by Proposition \ref{expandiendo} 
for $L$, $d := d_2'$ and $d' := d_1'$. Fix an integer $\ell \geq 1$ such that 
\begin{equation}\label{condicion-kappa}
\frac{(1+\lambda)^{\ell}}{L} > L.
\end{equation}
Using the elementary inequality \hspace{0.1cm} 
$(1 + \lambda)^{\ell} \geq 1 + \lambda \ell$, \hspace{0.01cm} 
one easily checks that this holds for
\begin{equation}
\label{condition-kappa-2}
\ell \geq \frac{L^2}{\lambda}.
\end{equation}

Let $2M$ be the side-length of the patches $\mQ_i^{0} := \mQ_i$, $i \in \{1,2\}$.  We view these 
patches as subsets of $[\![ -M,M ]\!] \times [\![ -M,M ]\!]$, that is, centered at the origin. We start 
by constructing new patches $\mQ_1^{1},\mQ_2^{1}$ as follows (see Figure 1):

\begin{itemize}

\item Fix an odd positive integer $m$ 
so that $2m P_* M \geq M_*$, and form a square (centered 
at the origin) of $(mP_*)^2$ copies of $\mQ_1$ matching 
left sides to right sides and lower sides to upper sides. 

\item Next, match to the right a square block consisting of $(mP_*)^2$ copies of $\mQ_2$. 
After this, match to the right a square block consisting of $(mP_*)^2$ copies of $\mQ_1$. Proceed 
similarly up to having matched $N$ blocks made of pieces $\mQ_1$ and $\mQ_2$ in an alternate 
way, where the integer $N \geq N_*$ is to be fixed below. 

\item Proceed similarly to the left of the centered-at-the-origin block made of pieces $\mQ_1$. 
In this way, we form a rectangle of sides \hspace{0.01cm} $2mP_*M(2N+1)$ 
\hspace{0.01cm} and \hspace{0.01cm} $2mP_*M$, \hspace{0.01cm} filled 
by alternate blocks of copies of $Q_1$ and $Q_2$.

\item To complete $\mQ_{1}^1$, fill up the whole square of side $2mP_*M(2N+1)$ 
centered at the origin by matching copies of $\mQ_1$ at all places, except for 
those in the lower rectangle of sides \hspace{0.01cm} $2mP_*M(2N+1)$ 
\hspace{0.01cm} and \hspace{0.01cm} $2mP_*M$, \hspace{0.01cm} 
where we match the rectangle constructed above.  (We emphasize that all matchings 
are made as above, that is, by identifying left to right sides, and lower to upper sides).

\item Finally, to construct $\mQ_2^1$, proceed similarly as for $\mQ_1^1$ 
switching the roles of $\mQ_1$ and $\mQ_2$. 
 
\item The integer $N$ is taken $\geq N_*$ and such that the density of points 
in the lower-left corner of $\mQ_1^1$ (resp. $\mQ_2^1$) is $< d_1'$ (resp. 
$>d_2'$). One can easily check that this holds for  $N$ satisfying 
\begin{equation}\label{condition-kappa-3}
N \geq 2 \max \left\{ \frac{N_*}{2}, \frac{1}{d_2-d_2'}, \frac{1}{d_1'-d_1} \right\}.
\end{equation}

\end{itemize}

Next, we repeat the procedure, but starting with the patches $\mQ_1^{1}, \mQ_2^{1}$, keeping 
the same constants $L, d_1', d_2'$. We thus get new patches $\mQ_1^2,\mQ_2^2$ of densities 
$< d_1'$ and $> d_2'$, respectively, to which we may apply the construction again... 
If we repeat this procedure $\ell$ times, we obtain new patches, that we  
denote by $\mQ_1^{new}$ and $\mQ_2^{new}$ (and that have densities 
$< d_1'$ and $> d_2'$, respectively). 

\vspace{0.1cm}

\begin{lem} \label{no-bilipschitz}
{\em Let $\mD$ be a Delone subset of $\ZZ^2$ satisfying the $2\mathbb{Z}$-property. 
If $\mD$ contains translated copies of either $\mQ_1^{new}$ or $\mQ_2^{new}$ as building blocks 
as above, then $\mD$ cannot be mapped onto $\ZZ^2$ by an $L$-bi-Lipschitz map. }
\end{lem}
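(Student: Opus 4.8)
The plan is to argue by contradiction: suppose $\mathcal{D}$ contains a translated copy of (say) $\mathcal{Q}_2^{new}$ as a building block, and suppose there is an $L$-bi-Lipschitz surjection $f\colon\mathcal{D}\to\ZZ^2$. The idea is that $\mathcal{Q}_2^{new}$ was obtained from $\mathcal{Q}_2$ by $\ell$ successive applications of the doubling-of-scales construction, each of which inserts, at the appropriate scale, two consecutive squares whose lower-left corners differ in density (one has density $>d_2'$, the other $<d_1'$). By Proposition~\ref{expandiendo}, such a density jump at scale $M$ forces, at the finer scale $M/P_*$, the existence of a point where the normalized displacement of $f$ (or of $f$ shifted by $(1,0)$, in the $2\mathbb{Z}$ case) along a segment of length $M/P_*$ exceeds the ``average stretch'' $\|v^f_{M,N}\|/(2MN)$ by a factor $(1+\lambda)$. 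The plan is to iterate this: starting from the coarsest scale (the full patch $\mathcal{Q}_2^{new}$), apply Proposition~\ref{expandiendo} $\ell$ times, each time passing to the next finer scale, and each time gaining a multiplicative factor $(1+\lambda)$ in the local expansion rate of $f$.

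More precisely, I would set up the following inductive descent. Write $R_{M_0,N_0}$ for the rectangle sitting inside the top-level copy of $\mathcal{Q}_2^{new}$ that contains the two density-unbalanced consecutive squares $S_k^{P_*}, S_{k+1}^{P_*}$ at the coarsest of the $\ell$ scales. Proposition~\ref{expandiendo} produces a point $x^{(1)}$ of the form $x_{i,j}^k$ with
$$\frac{\|f(x^{(1)}+w_1)-f(x^{(1)})\|}{\|w_1\|}\geq (1+\lambda)\,\frac{\|v^f_{M_0,N_0}\|}{2M_0N_0},$$
where $w_1$ is either $(M_0/P_*,0)$ or $(1+M_0/P_*,0)$. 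Now the segment from $x^{(1)}$ to $x^{(1)}+w_1$ sits inside one copy of the patch used at the previous stage of the construction — which is itself built by the same recipe from $\mathcal{Q}_1,\mathcal{Q}_2$ and therefore again contains two consecutive squares with a density jump, now at the next finer scale $M_1$ with $M_1 P_*\approx M_0$. Crucially, the ``average stretch'' of $f$ over that sub-patch, namely $\|f(x^{(1)}+w_1)-f(x^{(1)})\|/\|w_1\|$, plays the role of $\|v^f_{M_1,N_1}\|/(2M_1N_1)$ for the restricted map; so a second application of Proposition~\ref{expandiendo} (to $f$ restricted and recentered, which is still $L$-bi-Lipschitz and still surjective onto a copy of $\ZZ^2$ in the relevant sense) yields a point $x^{(2)}$ and vector $w_2$ with displacement rate at least $(1+\lambda)^2\,\|v^f_{M_0,N_0}\|/(2M_0N_0)$. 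Repeating $\ell$ times gives a point $x^{(\ell)}$ and a vector $w_\ell$ of norm $\asymp M_\ell/P_*\geq 1$ with
$$\frac{\|f(x^{(\ell)}+w_\ell)-f(x^{(\ell)})\|}{\|w_\ell\|}\geq (1+\lambda)^\ell\,\frac{\|v^f_{M_0,N_0}\|}{2M_0N_0}.$$
Finally, since $f$ is $L$-bi-Lipschitz we have $\|v^f_{M_0,N_0}\|/(2M_0N_0)\geq 1/L$ (the displacement of $f$ across $2M_0N_0$ horizontal units is at least $2M_0N_0/L$), while the left-hand side is at most $L$. Hence $L\geq (1+\lambda)^\ell/L$, contradicting the choice of $\ell$ in \eqref{condicion-kappa}. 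The case where $\mathcal{D}$ contains $\mathcal{Q}_1^{new}$ instead is identical (the two density-unbalanced squares just appear in the opposite order, which is exactly the ``resp.'' case in Proposition~\ref{expandiendo}).

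The main obstacle, and the point that needs the most care, is the bookkeeping in the inductive step: one must check that after restricting $f$ to a sub-patch and recentering, all the hypotheses of Proposition~\ref{expandiendo} are genuinely met at the new scale. Two things have to be verified. First, the quantity $\|f(x^{(s)}+w_s)-f(x^{(s)})\|/\|w_s\|$ must legitimately substitute for $\|v^f_{M,N}\|/(2MN)$ in the next application; since Proposition~\ref{expandiendo} is really a statement about the \emph{ratio} of a local displacement to a reference displacement, and the reference displacement at stage $s+1$ is the chord realized at stage $s$, this works provided we phrase the induction in terms of that ratio throughout rather than in terms of $v^f_{M,N}$ literally — this is a notational adjustment, not a mathematical gap. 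Second, one must ensure the scales nest correctly: the integer $m$ chosen in the construction guarantees $M_s$ is a multiple of $P_*$ and $M_s\geq M_*$, $N\geq N_*$ at every one of the $\ell$ stages, which is exactly what the choice of $m$ (with $2mP_*M\geq M_*$) and of $N$ in \eqref{condition-kappa-3} was designed to provide. One also needs that $w_\ell$ has norm at least $1$ so that the final comparison $\|f(x^{(\ell)}+w_\ell)-f(x^{(\ell)})\|\le L\|w_\ell\|$ is not vacuous; this is immediate since $M_\ell/P_*\geq 1$. Once these points are organized, the contradiction $L\geq(1+\lambda)^\ell/L$ closes the argument.
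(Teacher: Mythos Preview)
Your approach is the paper's: iterate Proposition~\ref{expandiendo} across the $\ell$ nested scales, gaining a factor $(1+\lambda)$ in the expansion ratio at each step, and contradict $(1+\lambda)^\ell > L^2$ via the two-sided $L$-bi-Lipschitz bound. There is one bookkeeping point you gloss over that the paper makes explicit. The segment from $x^{(s)}$ to $x^{(s)}+w_s$ has length $M_s/P_*$, and by the construction $M_s = m P_* \cdot (\text{side of }\mathcal{Q}_j^{\ell-s})$, so this segment does \emph{not} sit inside a single copy of the previous-stage patch as you claim: it spans $m$ consecutive copies of $\mathcal{Q}_j^{\ell-s}$ placed side by side. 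The paper handles this with a short triangle-inequality step: the expansion over the length-$M_s/P_*$ segment is a weighted average of the expansions over its $m$ sub-segments of length equal to the side of $\mathcal{Q}_j^{\ell-s}$, so at least one such sub-segment (now genuinely inside a single $\mathcal{Q}_j^{\ell-s}$) has expansion at least as large, and one descends from there. With that refinement your inductive descent is exactly the paper's argument.
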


\begin{proof} We call {\em expansion of points $x,y$ under a map $f$} the expression
$$\frac{\big\| f(x)- f(y) \big\|}{\big\| x-y \big\|}.$$ 
By Proposition \ref{expandiendo},  if $f$ is an $L$-bi-Lipschitz surjective map 
$\mD \to \mathbb{Z}^2$, the expansion of the end-points of the lower side of 
$\mQ^{\ell}_i$ is at most $\frac{1}{1+\lambda}$ times  the expansion of the end-points 
of the lower side of some square made of $mP_*$ copies of $\mQ_j^{\ell-1}$, where 
$m = m_{\ell}$. By the triangle inequality, the latter is larger than or equal to 
the expansion of the end-points of some of the patches $\mQ_j^{\ell-1}$ 
placed at the lower side of this square. 

By the construction, the preceding argument yields that the expansion above is no more than 
$\frac{1}{1+\lambda}$ times the expansion of the end-points of the lower side of a certain square 
$\mQ_{j'}^{\ell-2}$. Continuing this way, in $\ell$ steps, we get two pairs of points such that 
the expansion for one pair is at least $(1+\lambda)^{\ell}$ times that of the other pair. Now, as $f$ is 
$L$-bi-Lipschitz, both expansions are $\leq L$ and $\geq 1/L$. This is in contradiction to (\ref{condicion-kappa}).
\end{proof}

\vspace{0.2cm}

\begin{center}
 \resizebox{17cm}{!}{%
\begin{tikzpicture}
\draw(-18, -19.5)--(-17,-19.5);\draw(-18,-19.5) node{$|$};\draw(-17,-19.5) node{$|$}; \draw(-17.5,-20) node{$2M$};

\draw(-18, -18.5)--(-15,-18.5); \draw(-15,-18.5) node{$|$};\draw(-18,-18.5) node{$|$}; \draw(-16.5,-19) node{$2mP_*M$};

 \draw (-18,22) node{$|$}; \draw (-18,22) -- (-1,22); \draw (1,22) node{$2mP_*M(2N+1)$}; \draw (3,22) -- (21,22); \draw (21,22) node{$|$};

\draw (-18,-18) rectangle (21,21);
\draw[fill=gray!50] (-15,-18) rectangle (-12,-15); \draw[fill=gray!50] (-3,-18) rectangle (0,-15); \draw[fill=gray!50] (3,-18) rectangle (6,-15); \draw[fill=gray!50] (15,-18) rectangle (18,-15);

  \draw (-18,0) grid (-15,3);
 \draw (-17.5,0.5) node{$\mQ_1$}; \draw (-16.5,0.5) node{$\cdots$};  \draw (-15.5,0.5) node{$\mQ_1$};
 \draw (-17.5, 1.6) node{$\vdots$};  \draw (-16.5, 1.5) node{$\cdots $};  \draw (-15.5, 1.6) node{$\vdots$};
  \draw (-17.5,2.5) node{$\mQ_1$}; \draw (-16.5,2.5) node{$\cdots$};  \draw (-15.5,2.5) node{$\mQ_1$};

    \draw (-15,0) grid (-12,3);
 \draw (-14.5,0.5) node{$\mQ_1$}; \draw (-13.5,0.5) node{$\cdots$};  \draw (-12.5,0.5) node{$\mQ_1$};
 \draw (-14.5, 1.6) node{$\vdots$};  \draw (-13.5, 1.5) node{$\cdots $};  \draw (-12.5, 1.6) node{$\vdots$};
  \draw (-14.5,2.5) node{$\mQ_1$}; \draw (-13.5,2.5) node{$\cdots$};  \draw (-12.5,2.5) node{$\mQ_1$};

  \draw[dashed](-11,1.5)--(-7,1.5);

\draw (-6,0) grid (-3,3);
 \draw (-5.5,0.5) node{$\mQ_1$}; \draw (-4.5,0.5) node{$\cdots$};  \draw (-3.5,0.5) node {$\mQ_1$};
 \draw (-5.5, 1.6) node{$\vdots$};  \draw (-4.5, 1.5) node {$\cdots $};  \draw (-3.5, 1.6) node {$\vdots$};
  \draw (-5.5,2.5) node{$\mQ_1$}; \draw (-4.5,2.5) node{$\cdots$};  \draw (-3.5,2.5) node {$\mQ_1$};

\draw (-3,0) grid (0,3);
 \draw (-2.5,0.5) node {$\mQ_1$}; \draw (-1.5,0.5) node {$\cdots$};  \draw (-0.5,0.5) node{$\mQ_1$};
 \draw (-2.5, 1.6) node {$\vdots$};  \draw (-1.5, 1.5) node{$\cdots $};  \draw (-0.5, 1.6) node{$\vdots$};
  \draw (-2.5,2.5) node{$\mQ_1$}; \draw (-1.5,2.5) node{$\cdots$};  \draw (-0.5,2.5) node{$\mQ_1$};

 \draw (0,0) grid (3,3);
 \draw (0.5,0.5) node {$\mQ_1$}; \draw (1.5,0.5) node{$\cdots$};  \draw (2.5,0.5) node{$\mQ_1$};
 \draw (0.5, 1.6) node {$\vdots$};  \draw (1.5, 1.5) node{$\cdots $};  \draw (2.5, 1.6) node{$\vdots$};
  \draw (0.5,2.5) node {$\mQ_1$}; \draw (1.5,2.5) node {$\cdots$};  \draw (2.5,2.5) node{$\mQ_1$};
  
  \draw (3,0) grid (6,3);
 \draw (3.5,0.5) node{$\mQ_1$}; \draw (4.5,0.5) node{$\cdots$};  \draw (5.5,0.5) node{$\mQ_1$};
 \draw (3.5, 1.6) node{$\vdots$};  \draw (4.5, 1.5) node{$\cdots $};  \draw (5.5, 1.6) node{$\vdots$};
  \draw (3.5,2.5) node{$\mQ_1$}; \draw (4.5,2.5) node{$\cdots$};  \draw (5.5,2.5) node{$\mQ_1$};
  
   \draw (6,0) grid (9,3);
 \draw (6.5,0.5) node{$\mQ_1$}; \draw (7.5,0.5) node{$\cdots$};  \draw (8.5,0.5) node{$\mQ_1$};
 \draw (6.5, 1.6) node{$\vdots$};  \draw (7.5, 1.5) node{$\cdots $};  \draw (8.5, 1.6) node{$\vdots$};
  \draw (6.5,2.5) node{$\mQ_1$}; \draw (7.5,2.5) node{$\cdots$};  \draw (8.5,2.5) node{$\mQ_1$};

      \draw[dashed](10,1.5)--(14,1.5);
  
  \draw (15,0) grid (18,3);
 \draw (15.5,0.5) node{$\mQ_1$}; \draw (16.5,0.5) node{$\cdots$};  \draw (17.5,0.5) node{$\mQ_1$};
 \draw (15.5, 1.6) node{$\vdots$};  \draw (16.5, 1.5) node{$\cdots $};  \draw (17.5, 1.6) node{$\vdots$};
  \draw (15.5,2.5) node{$\mQ_1$}; \draw (16.5,2.5) node{$\cdots$};  \draw (17.5,2.5) node{$\mQ_1$};

    \draw (18,0) grid (21,3);
 \draw (18.5,0.5) node{$\mQ_1$}; \draw (19.5,0.5) node{$\cdots$};  \draw (20.5,0.5) node{$\mQ_1$};
 \draw (18.5, 1.6) node{$\vdots$};  \draw (19.5, 1.5) node{$\cdots $};  \draw (20.5, 1.6) node{$\vdots$};
  \draw (18.5,2.5) node{$\mQ_1$}; \draw (19.5,2.5) node{$\cdots$};  \draw (20.5,2.5) node{$\mQ_1$};


 \draw (-18,3) grid (-15,6);
 \draw (-17.5,3.5) node{$\mQ_1$}; \draw (-16.5,3.5) node{$\cdots$};  \draw (-15.5,3.5) node{$\mQ_1$};
 \draw (-17.5, 4.6) node{$\vdots$};  \draw (-16.5, 4.5) node{$\cdots $};  \draw (-15.5, 4.6) node{$\vdots$};
  \draw (-17.5,5.5) node{$\mQ_1$}; \draw (-16.5, 5.5) node{$\cdots$};  \draw (-15.5,5.5) node{$\mQ_1$};

    \draw (-15,3) grid (-12,6);
 \draw (-14.5,3.5) node{$\mQ_1$}; \draw (-13.5,3.5) node{$\cdots$};  \draw (-12.5,3.5) node{$\mQ_1$};
 \draw (-14.5, 4.6) node{$\vdots$};  \draw (-13.5, 4.5) node{$\cdots $};  \draw (-12.5, 4.6) node{$\vdots$};
  \draw (-14.5, 5.5) node{$\mQ_1$}; \draw (-13.5, 5.5) node{$\cdots$};  \draw (-12.5, 5.5) node{$\mQ_1$};

  \draw[dashed](-11,4.5)--(-7, 4.5);

\draw (-6,3) grid (-3,6);
 \draw (-5.5, 3.5) node{$\mQ_1$}; \draw (-4.5,3.5) node{$\cdots$};  \draw (-3.5, 3.5) node {$\mQ_1$};
 \draw (-5.5, 4.6) node{$\vdots$};  \draw (-4.5, 4.5) node {$\cdots $};  \draw (-3.5, 4.6) node {$\vdots$};
  \draw (-5.5, 5.5) node{$\mQ_1$}; \draw (-4.5, 5.5) node{$\cdots$};  \draw (-3.5, 5.5) node {$\mQ_1$};

\draw (-3,3) grid (0,6);
 \draw (-2.5, 3.5) node {$\mQ_1$}; \draw (-1.5, 3.5) node {$\cdots$};  \draw (-0.5,3.5) node{$\mQ_1$};
 \draw (-2.5, 4.6) node {$\vdots$};  \draw (-1.5, 4.5) node{$\cdots $};  \draw (-0.5, 4.6) node{$\vdots$};
  \draw (-2.5, 5.5) node{$\mQ_1$}; \draw (-1.5, 5.5) node{$\cdots$};  \draw (-0.5, 5.5) node{$\mQ_1$};

 \draw (0,3) grid (3,6);
 \draw (0.5, 3.5) node {$\mQ_1$}; \draw (1.5, 3.5) node{$\cdots$};  \draw (2.5, 3.5) node{$\mQ_1$};
 \draw (0.5, 4.6) node {$\vdots$};  \draw (1.5, 4.5) node{$\cdot $};  \draw (2.5, 4.6) node{$\vdots$};
  \draw (0.5, 5.5) node {$\mQ_1$}; \draw (1.5, 5.5) node {$\cdots$};  \draw (2.5, 5.5) node{$\mQ_1$};
  
  \draw (3,3) grid (6,6);
 \draw (3.5, 3.5) node{$\mQ_1$}; \draw (4.5,3.5) node{$\cdots$};  \draw (5.5,3.5) node{$\mQ_1$};
 \draw (3.5, 4.6) node{$\vdots$};  \draw (4.5, 4.5) node{$\cdots $};  \draw (5.5, 4.6) node{$\vdots$};
  \draw (3.5, 5.5) node{$\mQ_1$}; \draw (4.5,5.5) node{$\cdots$};  \draw (5.5, 5.5) node{$\mQ_1$};
  
   \draw (6,3) grid (9,6);
 \draw (6.5, 3.5) node{$\mQ_1$}; \draw (7.5,3.5) node{$\cdots$};  \draw (8.5, 3.5) node{$\mQ_1$};
 \draw (6.5, 4.6) node{$\vdots$};  \draw (7.5, 4.5) node{$\cdots $};  \draw (8.5, 4.6) node{$\vdots$};
  \draw (6.5, 5.5) node{$\mQ_1$}; \draw (7.5, 5.5) node{$\cdots$};  \draw (8.5, 5.5) node{$\mQ_1$};

      \draw[dashed](10, 4.5)--(14, 4.5);
  
  \draw (15,3) grid (18,6);
 \draw (15.5,3.5) node{$\mQ_1$}; \draw (16.5,3.5) node{$\cdots$};  \draw (17.5, 3.5) node{$\mQ_1$};
 \draw (15.5, 4.6) node{$\vdots$};  \draw (16.5, 4.5) node{$\cdots $};  \draw (17.5, 4.6) node{$\vdots$};
  \draw (15.5,5.5) node{$\mQ_1$}; \draw (16.5, 5.5) node{$\cdots$};  \draw (17.5, 5.5) node{$\mQ_1$};

    \draw (18,3) grid (21,6);
 \draw (18.5, 3.5) node{$\mQ_1$}; \draw (19.5, 3.5) node{$\cdots$};  \draw (20.5, 3.5) node{$\mQ_1$};
 \draw (18.5, 4.6) node{$\vdots$};  \draw (19.5, 4.5) node{$\cdots $};  \draw (20.5, 4.6) node{$\vdots$};
  \draw (18.5,5.5) node{$\mQ_1$}; \draw (19.5, 5.5) node{$\cdots$};  \draw (20.5, 5.5) node{$\mQ_1$};


 \draw (-18,6) grid (-15,9);
 \draw (-17.5,6.5) node{$\mQ_1$}; \draw (-16.5,6.5) node{$\cdots$};  \draw (-15.5,6.5) node{$\mQ_1$};
 \draw (-17.5, 7.6) node{$\vdots$};  \draw (-16.5, 7.5) node{$\cdots $};  \draw (-15.5, 7.6) node{$\vdots$};
  \draw (-17.5,8.5) node{$\mQ_1$}; \draw (-16.5, 8.5) node{$\cdots$};  \draw (-15.5,8.5) node{$\mQ_1$};

    \draw (-15,6) grid (-12,9);
 \draw (-14.5,6.5) node{$\mQ_1$}; \draw (-13.5,6.5) node{$\cdots$};  \draw (-12.5,6.5) node{$\mQ_1$};
 \draw (-14.5, 7.6) node{$\vdots$};  \draw (-13.5, 7.5) node{$\cdots $};  \draw (-12.5, 7.6) node{$\vdots$};
  \draw (-14.5, 8.5) node{$\mQ_1$}; \draw (-13.5, 8.5) node{$\cdots$};  \draw (-12.5, 8.5) node{$\mQ_1$};

  \draw[dashed](-11,7.5)--(-7, 7.5);

\draw (-6,6) grid (-3,9);
 \draw (-5.5, 6.5) node{$\mQ_1$}; \draw (-4.5,6.5) node{$\cdots$};  \draw (-3.5, 6.5) node {$\mQ_1$};
 \draw (-5.5, 7.6) node{$\vdots$};  \draw (-4.5, 7.5) node {$\cdots $};  \draw (-3.5, 7.6) node {$\vdots$};
  \draw (-5.5, 8.5) node{$\mQ_1$}; \draw (-4.5, 8.5) node{$\cdots$};  \draw (-3.5, 8.5) node {$\mQ_1$};

\draw (-3,6) grid (0,9);
 \draw (-2.5, 6.5) node {$\mQ_1$}; \draw (-1.5, 6.5) node {$\cdots$};  \draw (-0.5,6.5) node{$\mQ_1$};
 \draw (-2.5, 7.6) node {$\vdots$};  \draw (-1.5, 7.5) node{$\cdots $};  \draw (-0.5, 7.6) node{$\vdots$};
  \draw (-2.5, 8.5) node{$\mQ_1$}; \draw (-1.5, 8.5) node{$\cdots$};  \draw (-0.5, 8.5) node{$\mQ_1$};

 \draw (0,6) grid (3,9);
 \draw (0.5, 6.5) node {$\mQ_1$}; \draw (1.5, 6.5) node{$\cdots$};  \draw (2.5, 6.5) node{$\mQ_1$};
 \draw (0.5, 7.6) node {$\vdots$};  \draw (1.5, 7.5) node{$\cdot $};  \draw (2.5, 7.6) node{$\vdots$};
  \draw (0.5, 8.5) node {$\mQ_1$}; \draw (1.5, 8.5) node {$\cdots$};  \draw (2.5, 8.5) node{$\mQ_1$};
  
  \draw (3,6) grid (6,9);
 \draw (3.5, 6.5) node{$\mQ_1$}; \draw (4.5,6.5) node{$\cdots$};  \draw (5.5,6.5) node{$\mQ_1$};
 \draw (3.5, 7.6) node{$\vdots$};  \draw (4.5, 7.5) node{$\cdots $};  \draw (5.5, 7.6) node{$\vdots$};
  \draw (3.5, 8.5) node{$\mQ_1$}; \draw (4.5,8.5) node{$\cdots$};  \draw (5.5, 8.5) node{$\mQ_1$};
  
   \draw (6,6) grid (9,9);
 \draw (6.5, 6.5) node{$\mQ_1$}; \draw (7.5,6.5) node{$\cdots$};  \draw (8.5, 6.5) node{$\mQ_1$};
 \draw (6.5, 7.6) node{$\vdots$};  \draw (7.5, 7.5) node{$\cdots $};  \draw (8.5, 7.6) node{$\vdots$};
  \draw (6.5, 8.5) node{$\mQ_1$}; \draw (7.5, 8.5) node{$\cdots$};  \draw (8.5, 8.5) node{$\mQ_1$};

      \draw[dashed](10, 7.5)--(14, 7.5);
  
  \draw (15,6) grid (18,9);
 \draw (15.5,6.5) node{$\mQ_1$}; \draw (16.5,6.5) node{$\cdots$};  \draw (17.5, 6.5) node{$\mQ_1$};
 \draw (15.5, 7.6) node{$\vdots$};  \draw (16.5, 7.5) node{$\cdots $};  \draw (17.5, 7.6) node{$\vdots$};
  \draw (15.5,8.5) node{$\mQ_1$}; \draw (16.5, 8.5) node{$\cdots$};  \draw (17.5, 8.5) node{$\mQ_1$};

    \draw (18,6) grid (21,9);
 \draw (18.5, 6.5) node{$\mQ_1$}; \draw (19.5, 6.5) node{$\cdots$};  \draw (20.5, 6.5) node{$\mQ_1$};
 \draw (18.5, 7.6) node{$\vdots$};  \draw (19.5, 7.5) node{$\cdots $};  \draw (20.5, 7.6) node{$\vdots$};
  \draw (18.5,8.5) node{$\mQ_1$}; \draw (19.5, 8.5) node{$\cdots$};  \draw (20.5, 8.5) node{$\mQ_1$};


  \draw (-18,-3) grid (-15,0);
 \draw (-17.5,-2.5) node{$\mQ_1$}; \draw (-16.5,-2.5) node{$\cdots$};  \draw (-15.5,-2.5) node{$\mQ_1$};
 \draw (-17.5, -1.6) node{$\vdots$};  \draw (-16.5, -1.5) node{$\cdots $};  \draw (-15.5, -1.6) node{$\vdots$};
  \draw (-17.5,-0.5) node{$\mQ_1$}; \draw (-16.5,-0.5) node{$\cdots$};  \draw (-15.5,-0.5) node{$\mQ_1$};

    \draw (-15,-3) grid (-12,0);
 \draw (-14.5,-2.5) node{$\mQ_1$}; \draw (-13.5,-2.5) node{$\cdots$};  \draw (-12.5,-2.5) node{$\mQ_1$};
 \draw (-14.5, -1.6) node{$\vdots$};  \draw (-13.5, -1.5) node{$\cdots $};  \draw (-12.5, -1.6) node{$\vdots$};
  \draw (-14.5,-0.5) node{$\mQ_1$}; \draw (-13.5,-0.5) node{$\cdots$};  \draw (-12.5,-0.5) node{$\mQ_1$};
 
  \draw[dashed](-11,-1.5)--(-7,-1.5);

\draw (-6,-3) grid (-3,0);
 \draw (-5.5,-2.5) node{$\mQ_1$}; \draw (-4.5,-2.5) node{$\cdots$};  \draw (-3.5,-2.5) node {$\mQ_1$};
 \draw (-5.5, -1.6) node{$\vdots$};  \draw (-4.5, -1.5) node {$\cdots $};  \draw (-3.5, -1.6) node {$\vdots$};
  \draw (-5.5,-0.5) node{$\mQ_1$}; \draw (-4.5,-0.5) node{$\cdots$};  \draw (-3.5,-0.5) node {$\mQ_1$};

\draw (-3,-3) grid (0,0);
 \draw (-2.5,-2.5) node {$\mQ_1$}; \draw (-1.5,-2.5) node {$\cdots$};  \draw (-0.5,-2.5) node{$\mQ_1$};
 \draw (-2.5, -1.6) node {$\vdots$};  \draw (-1.5, -1.5) node{$\cdots $};  \draw (-0.5, -1.6) node{$\vdots$};
  \draw (-2.5,-0.5) node{$\mQ_1$}; \draw (-1.5,-0.5) node{$\cdots$};  \draw (-0.5,-0.5) node{$\mQ_1$};

 \draw (0,-3) grid (3,0);
 \draw (0.5,-2.5) node {$\mQ_1$}; \draw (1.5,-2.5) node{$\cdots$};  \draw (2.5,-2.5) node{$\mQ_1$};
 \draw (0.5, -1.6) node {$\vdots$};  \draw (1.5, -1.5) node{$\cdot $};  \draw (2.5, -1.6) node{$\vdots$};
  \draw (0.5,-0.5) node {$\mQ_1$}; \draw (1.5,-0.5) node {$\cdots$};  \draw (2.5,- 0.5) node{$\mQ_1$};
  
  \draw (3,-3) grid (6,0);
 \draw (3.5,-2.5) node{$\mQ_1$}; \draw (4.5,-2.5) node{$\cdots$};  \draw (5.5,-2.5) node{$\mQ_1$};
 \draw (3.5, -1.6) node{$\vdots$};  \draw (4.5, -1.5) node{$\cdots $};  \draw (5.5, -1.6) node{$\vdots$};
  \draw (3.5,-0.5) node{$\mQ_1$}; \draw (4.5,-0.5) node{$\cdots$};  \draw (5.5,-0.5) node{$\mQ_1$};
  
   \draw (6,-3) grid (9,0);
 \draw (6.5,-2.5) node{$\mQ_1$}; \draw (7.5,-2.5) node{$\cdots$};  \draw (8.5,-2.5) node{$\mQ_1$};
 \draw (6.5, -1.6) node{$\vdots$};  \draw (7.5, -1.5) node{$\cdots $};  \draw (8.5, -1.6) node{$\vdots$};
  \draw (6.5,-0.5) node{$\mQ_1$}; \draw (7.5,-0.5) node{$\cdots$};  \draw (8.5,-0.5) node{$\mQ_1$};

     \draw[dashed](10,-1.5)--(14,-1.5);
  
  \draw (15,-3) grid (18,0);
 \draw (15.5,-2.5) node{$\mQ_1$}; \draw (16.5,-2.5) node{$\cdots$};  \draw (17.5,-2.5) node{$\mQ_1$};
 \draw (15.5, -1.6) node{$\vdots$};  \draw (16.5, -1.5) node{$\cdots $};  \draw (17.5,-1.6) node{$\vdots$};
  \draw (15.5,-0.5) node{$\mQ_1$}; \draw (16.5,-0.5) node{$\cdots$};  \draw (17.5,-0.5) node{$\mQ_1$};

    \draw (18,-3) grid (21,0);
 \draw (18.5,-2.5) node{$\mQ_1$}; \draw (19.5,-2.5) node{$\cdots$};  \draw (20.5,-2.5) node{$\mQ_1$};
 \draw (18.5, -1.6) node{$\vdots$};  \draw (19.5, -1.5) node{$\cdots $};  \draw (20.5, -1.6) node{$\vdots$};
  \draw (18.5,-0.5) node{$\mQ_1$}; \draw (19.5,-0.5) node{$\cdots$};  \draw (20.5,-0.5) node{$\mQ_1$};


  \draw (-18,-6) grid (-15,-3);
 \draw (-17.5,-5.5) node{$\mQ_1$}; \draw (-16.5,-5.5) node{$\cdots$};  \draw (-15.5,-5.5) node{$\mQ_1$};
 \draw (-17.5, -4.6) node{$\vdots$};  \draw (-16.5, -4.5) node{$\cdots $};  \draw (-15.5, -4.6) node{$\vdots$};
  \draw (-17.5,-3.5) node{$\mQ_1$}; \draw (-16.5,-3.5) node{$\cdots$};  \draw (-15.5,-3.5) node{$\mQ_1$};

    \draw (-15,-6) grid (-12,-3);
 \draw (-14.5,-5.5) node{$\mQ_1$}; \draw (-13.5,-5.5) node{$\cdots$};  \draw (-12.5,-5.5) node{$\mQ_1$};
 \draw (-14.5, -4.6) node{$\vdots$};  \draw (-13.5, -4.5) node{$\cdots $};  \draw (-12.5, -4.6) node{$\vdots$};
  \draw (-14.5,-3.5) node{$\mQ_1$}; \draw (-13.5,-3.5) node{$\cdots$};  \draw (-12.5,-5.5) node{$\mQ_1$};

  \draw[dashed](-11,-4.5)--(-7,-4.5);

\draw (-6,-6) grid (-3,-3);
 \draw (-5.5,-5.5) node{$\mQ_1$}; \draw (-4.5,-5.5) node{$\cdots$};  \draw (-3.5,-5.5) node {$\mQ_1$};
 \draw (-5.5, -4.6) node{$\vdots$};  \draw (-4.5, -4.5) node {$\cdots $};  \draw (-3.5, -4.6) node {$\vdots$};
  \draw (-5.5,-3.5) node{$\mQ_1$}; \draw (-4.5,-3.5) node{$\cdots$};  \draw (-3.5,-3.5) node {$\mQ_1$};

\draw (-3,-6) grid (0,-3);
 \draw (-2.5,-5.5) node {$\mQ_1$}; \draw (-1.5,-5.5) node {$\cdots$};  \draw (-0.5,-5.5) node{$\mQ_1$};
 \draw (-2.5, -4.6) node {$\vdots$};  \draw (-1.5, -4.5) node{$\cdots $};  \draw (-0.5, -4.6) node{$\vdots$};
  \draw (-2.5,-3.5) node{$\mQ_1$}; \draw (-1.5,-3.5) node{$\cdots$};  \draw (-0.5,-3.5) node{$\mQ_1$};

 \draw (0,-6) grid (3,-3);
 \draw (0.5,-5.5) node {$\mQ_1$}; \draw (1.5,-5.5) node{$\cdots$};  \draw (2.5,-5.5) node{$\mQ_1$};
 \draw (0.5, -4.6) node {$\vdots$};  \draw (1.5, -4.5) node{$\cdot $};  \draw (2.5, -4.6) node{$\vdots$};
  \draw (0.5,-3.5) node {$\mQ_1$}; \draw (1.5,-3.5) node {$\cdots$};  \draw (2.5,- 3.5) node{$\mQ_1$};
  
  \draw (3,-6) grid (6,-3);
 \draw (3.5,-5.5) node{$\mQ_1$}; \draw (4.5,-5.5) node{$\cdots$};  \draw (5.5,-5.5) node{$\mQ_1$};
 \draw (3.5, -4.6) node{$\vdots$};  \draw (4.5, -4.5) node{$\cdots $};  \draw (5.5, -4.6) node{$\vdots$};
  \draw (3.5,-3.5) node{$\mQ_1$}; \draw (4.5,-3.5) node{$\cdots$};  \draw (5.5,-3.5) node{$\mQ_1$};
  
   \draw (6,-6) grid (9,-3);
 \draw (6.5,-5.5) node{$\mQ_1$}; \draw (7.5,-5.5) node{$\cdots$};  \draw (8.5,-5.5) node{$\mQ_1$};
 \draw (6.5, -4.6) node{$\vdots$};  \draw (7.5, -4.5) node{$\cdots $};  \draw (8.5, -4.6) node{$\vdots$};
  \draw (6.5,-3.5) node{$\mQ_1$}; \draw (7.5,-3.5) node{$\cdots$};  \draw (8.5,-3.5) node{$\mQ_1$};

     \draw[dashed](10,-4.5)--(14,-4.5);
  
  \draw (15,-6) grid (18,-3);
 \draw (15.5,-5.5) node{$\mQ_1$}; \draw (16.5,-5.5) node{$\cdots$};  \draw (17.5,-5.5) node{$\mQ_1$};
 \draw (15.5, -4.6) node{$\vdots$};  \draw (16.5, -4.5) node{$\cdots $};  \draw (17.5,-4.6) node{$\vdots$};
  \draw (15.5,-3.5) node{$\mQ_1$}; \draw (16.5,-3.5) node{$\cdots$};  \draw (17.5,-3.5) node{$\mQ_1$};

    \draw (18,-6) grid (21,-3);
 \draw (18.5,-5.5) node{$\mQ_1$}; \draw (19.5,-5.5) node{$\cdots$};  \draw (20.5,-5.5) node{$\mQ_1$};
 \draw (18.5, -4.6) node{$\vdots$};  \draw (19.5, -4.5) node{$\cdots $};  \draw (20.5, -4.6) node{$\vdots$};
  \draw (18.5,-3.5) node{$\mQ_1$}; \draw (19.5,-3.5) node{$\cdots$};  \draw (20.5,-3.5) node{$\mQ_1$};


  \draw (-18,18) grid (-15,21);
 \draw (-17.5,18.5) node{$\mQ_1$}; \draw (-16.5,18.5) node{$\cdots$};  \draw (-15.5,18.5) node{$\mQ_1$};
 \draw (-17.5, 19.6) node{$\vdots$};  \draw (-16.5, 19.5) node{$\cdots $};  \draw (-15.5, 19.6) node{$\vdots$};
  \draw (-17.5, 20.5) node{$\mQ_1$}; \draw (-16.5, 20.5) node{$\cdots$};  \draw (-15.5, 20.5) node{$\mQ_1$};

    \draw (-15,18) grid (-12,21);
 \draw (-14.5,18.5) node{$\mQ_1$}; \draw (-13.5,18.5) node{$\cdots$};  \draw (-12.5,18.5) node{$\mQ_1$};
 \draw (-14.5, 19.6) node{$\vdots$};  \draw (-13.5, 19.5) node{$\cdots $};  \draw (-12.5, 19.6) node{$\vdots$};
  \draw (-14.5, 20.5) node{$\mQ_1$}; \draw (-13.5,20.5) node{$\cdots$};  \draw (-12.5, 20.5) node{$\mQ_1$};

  \draw[dashed](-11,19.5)--(-7, 19.5);

\draw (-6,18) grid (-3,21);
 \draw (-5.5,18.5) node{$\mQ_1$}; \draw (-4.5,18.5) node{$\cdots$};  \draw (-3.5,18.5) node {$\mQ_1$};
 \draw (-5.5, 19.6) node{$\vdots$};  \draw (-4.5, 19.5) node {$\cdots $};  \draw (-3.5, 19.6) node {$\vdots$};
  \draw (-5.5,20.5) node{$\mQ_1$}; \draw (-4.5,20.5) node{$\cdots$};  \draw (-3.5, 20.5) node {$\mQ_1$};

\draw (-3,18) grid (0,21);
 \draw (-2.5,18.5) node {$\mQ_1$}; \draw (-1.5,18.5) node {$\cdots$};  \draw (-0.5,18.5) node{$\mQ_1$};
 \draw (-2.5, 19.6) node {$\vdots$};  \draw (-1.5, 19.5) node{$\cdots $};  \draw (-0.5, 19.6) node{$\vdots$};
  \draw (-2.5,20.5) node{$\mQ_1$}; \draw (-1.5, 20.5) node{$\cdots$};  \draw (-0.5, 20.5) node{$\mQ_1$};

 \draw (0,18) grid (3,21);
 \draw (0.5,18.5) node {$\mQ_1$}; \draw (1.5,18.5) node{$\cdots$};  \draw (2.5,18.5) node{$\mQ_1$};
 \draw (0.5, 19.6) node {$\vdots$};  \draw (1.5, 19.5) node{$\cdot $};  \draw (2.5, 19.6) node{$\vdots$};
  \draw (0.5, 20.5) node {$\mQ_1$}; \draw (1.5, 20.5) node {$\cdots$};  \draw (2.5, 20.5) node{$\mQ_1$};
  
  \draw (3,18) grid (6,21);
 \draw (3.5,18.5) node{$\mQ_1$}; \draw (4.5,18.5) node{$\cdots$};  \draw (5.5,18.5) node{$\mQ_1$};
 \draw (3.5, 19.6) node{$\vdots$};  \draw (4.5, 19.5) node{$\cdots $};  \draw (5.5, 19.6) node{$\vdots$};
  \draw (3.5,20.5) node{$\mQ_1$}; \draw (4.5, 20.5) node{$\cdots$};  \draw (5.5, 20.5) node{$\mQ_1$};
  
   \draw (6,18) grid (9,21);
 \draw (6.5,18.5) node{$\mQ_1$}; \draw (7.5,18.5) node{$\cdots$};  \draw (8.5, 18.5) node{$\mQ_1$};
 \draw (6.5, 19.6) node{$\vdots$};  \draw (7.5, 19.5) node{$\cdots $};  \draw (8.5, 19.6) node{$\vdots$};
  \draw (6.5, 20.5) node{$\mQ_1$}; \draw (7.5, 20.5) node{$\cdots$};  \draw (8.5, 20.5) node{$\mQ_1$};

         \draw[dashed](10, 19.5)--(14, 19.5);
  
  \draw (15,18) grid (18,21);
 \draw (15.5,18.5) node{$\mQ_2$}; \draw (16.5,18.5) node{$\cdots$};  \draw (17.5,18.5) node{$\mQ_2$};
 \draw (15.5, 19.6) node{$\vdots$};  \draw (16.5, 19.5) node{$\cdots $};  \draw (17.5, 19.6) node{$\vdots$};
  \draw (15.5, 20.5) node{$\mQ_2$}; \draw (16.5, 20.5) node{$\cdots$};  \draw (17.5, 20.5) node{$\mQ_2$};

    \draw (18,18) grid (21,21);
 \draw (18.5,18.5) node{$\mQ_1$}; \draw (19.5,18.5) node{$\cdots$};  \draw (20.5,18.5) node{$\mQ_1$};
 \draw (18.5, 19.6) node{$\vdots$};  \draw (19.5, 19.5) node{$\cdots $};  \draw (20.5, 19.6) node{$\vdots$};
  \draw (18.5, 20.5) node{$\mQ_1$}; \draw (19.5, 20.5) node{$\cdots$};  \draw (20.5, 20.5) node{$\mQ_1$};
 \draw (-18,15) grid (-15,18);
 \draw (-17.5,15.5) node{$\mQ_1$}; \draw (-16.5,15.5) node{$\cdots$};  \draw (-15.5,15.5) node{$\mQ_1$};
 \draw (-17.5, 16.6) node{$\vdots$};  \draw (-16.5, 16.5) node{$\cdots $};  \draw (-15.5, 16.6) node{$\vdots$};
  \draw (-17.5,17.5) node{$\mQ_1$}; \draw (-16.5, 17.5) node{$\cdots$};  \draw (-15.5, 17.5) node{$\mQ_1$};

    \draw (-15,15) grid (-12,18);
 \draw (-14.5,15.5) node{$\mQ_1$}; \draw (-13.5,15.5) node{$\cdots$};  \draw (-12.5,15.5) node{$\mQ_1$};
 \draw (-14.5, 16.6) node{$\vdots$};  \draw (-13.5, 16.5) node{$\cdots $};  \draw (-12.5, 16.6) node{$\vdots$};
  \draw (-14.5, 17.5) node{$\mQ_1$}; \draw (-13.5,17.5) node{$\cdots$};  \draw (-12.5,17.5) node{$\mQ_1$};

  \draw[dashed](-11,16.5)--(-7,16.5);

\draw (-6,15) grid (-3,18);
 \draw (-5.5,15.5) node{$\mQ_1$}; \draw (-4.5,15.5) node{$\cdots$};  \draw (-3.5,15.5) node {$\mQ_1$};
 \draw (-5.5, 16.6) node{$\vdots$};  \draw (-4.5, 16.5) node {$\cdots $};  \draw (-3.5, 16.6) node {$\vdots$};
  \draw (-5.5,17.5) node{$\mQ_1$}; \draw (-4.5,17.5) node{$\cdots$};  \draw (-3.5, 17.5) node {$\mQ_1$};

\draw (-3,15) grid (0,18);
 \draw (-2.5,15.5) node {$\mQ_1$}; \draw (-1.5,15.5) node {$\cdots$};  \draw (-0.5,15.5) node{$\mQ_1$};
 \draw (-2.5, 16.6) node {$\vdots$};  \draw (-1.5, 16.5) node{$\cdots $};  \draw (-0.5, 16.6) node{$\vdots$};
  \draw (-2.5,17.5) node{$\mQ_1$}; \draw (-1.5, 17.5) node{$\cdots$};  \draw (-0.5,17.5) node{$\mQ_1$};

 \draw (0,15) grid (3,18);
 \draw (0.5,15.5) node {$\mQ_1$}; \draw (1.5,15.5) node{$\cdots$};  \draw (2.5,15.5) node{$\mQ_1$};
 \draw (0.5, 16.6) node {$\vdots$};  \draw (1.5, 16.5) node{$\cdot $};  \draw (2.5, 16.6) node{$\vdots$};
  \draw (0.5,17.5) node {$\mQ_1$}; \draw (1.5, 17.5) node {$\cdots$};  \draw (2.5,17.5) node{$\mQ_1$};
  
  \draw (3,15) grid (6,18);
 \draw (3.5,15.5) node{$\mQ_1$}; \draw (4.5,15.5) node{$\cdots$};  \draw (5.5,15.5) node{$\mQ_1$};
 \draw (3.5, 16.6) node{$\vdots$};  \draw (4.5, 16.5) node{$\cdots $};  \draw (5.5, 16.6) node{$\vdots$};
  \draw (3.5,17.5) node{$\mQ_1$}; \draw (4.5,17.5) node{$\cdots$};  \draw (5.5,17.5) node{$\mQ_1$};
  
   \draw (6,15) grid (9,18);
 \draw (6.5,15.5) node{$\mQ_1$}; \draw (7.5,15.5) node{$\cdots$};  \draw (8.5, 15.5) node{$\mQ_1$};
 \draw (6.5, 16.6) node{$\vdots$};  \draw (7.5, 16.5) node{$\cdots $};  \draw (8.5, 16.6) node{$\vdots$};
  \draw (6.5,17.5) node{$\mQ_1$}; \draw (7.5,17.5) node{$\cdots$};  \draw (8.5,17.5) node{$\mQ_1$};

          \draw[dashed](10, 16.5)--(14, 16.5);
  
  \draw (15,15) grid (18,18);
 \draw (15.5,15.5) node{$\mQ_1$}; \draw (16.5,15.5) node{$\cdots$};  \draw (17.5,15.5) node{$\mQ_1$};
 \draw (15.5, 16.6) node{$\vdots$};  \draw (16.5, 16.5) node{$\cdots $};  \draw (17.5, 16.6) node{$\vdots$};
  \draw (15.5, 17.5) node{$\mQ_1$}; \draw (16.5,17.5) node{$\cdots$};  \draw (17.5,17.5) node{$\mQ_1$};

    \draw (18,15) grid (21,18);
 \draw (18.5,15.5) node{$\mQ_1$}; \draw (19.5,15.5) node{$\cdots$};  \draw (20.5,15.5) node{$\mQ_1$};
 \draw (18.5, 16.6) node{$\vdots$};  \draw (19.5, 16.5) node{$\cdots $};  \draw (20.5, 16.6) node{$\vdots$};
  \draw (18.5,17.5) node{$\mQ_1$}; \draw (19.5,17.5) node{$\cdots$};  \draw (20.5,17.5) node{$\mQ_1$};


\draw (-18,12) grid (-15,15);
 \draw (-17.5,12.5) node{$\mQ_1$}; \draw (-16.5,12.5) node{$\cdots$};  \draw (-15.5,12.5) node{$\mQ_1$};
 \draw (-17.5, 13.6) node{$\vdots$};  \draw (-16.5, 13.5) node{$\cdots $};  \draw (-15.5, 13.6) node{$\vdots$};
  \draw (-17.5,14.5) node{$\mQ_1$}; \draw (-16.5, 14.5) node{$\cdots$};  \draw (-15.5, 14.5) node{$\mQ_1$};

    \draw (-15,12) grid (-12,15);
 \draw (-14.5,12.5) node{$\mQ_1$}; \draw (-13.5,12.5) node{$\cdots$};  \draw (-12.5,12.5) node{$\mQ_1$};
 \draw (-14.5, 13.6) node{$\vdots$};  \draw (-13.5, 13.5) node{$\cdots $};  \draw (-12.5, 13.6) node{$\vdots$};
  \draw (-14.5, 14.5) node{$\mQ_1$}; \draw (-13.5,14.5) node{$\cdots$};  \draw (-12.5,14.5) node{$\mQ_1$};

  \draw[dashed](-11,13.5)--(-7,13.5);

\draw (-6,12) grid (-3,15);
 \draw (-5.5,12.5) node{$\mQ_1$}; \draw (-4.5,12.5) node{$\cdots$};  \draw (-3.5,12.5) node {$\mQ_1$};
 \draw (-5.5, 13.6) node{$\vdots$};  \draw (-4.5, 13.5) node {$\cdots $};  \draw (-3.5, 13.6) node {$\vdots$};
  \draw (-5.5,14.5) node{$\mQ_1$}; \draw (-4.5,14.5) node{$\cdots$};  \draw (-3.5, 14.5) node {$\mQ_1$};

\draw (-3,12) grid (0,15);
 \draw (-2.5,12.5) node {$\mQ_1$}; \draw (-1.5,12.5) node {$\cdots$};  \draw (-0.5,12.5) node{$\mQ_1$};
 \draw (-2.5, 13.6) node {$\vdots$};  \draw (-1.5, 13.5) node{$\cdots $};  \draw (-0.5, 13.6) node{$\vdots$};
  \draw (-2.5,14.5) node{$\mQ_1$}; \draw (-1.5, 14.5) node{$\cdots$};  \draw (-0.5,14.5) node{$\mQ_1$};

 \draw (0,12) grid (3,15);
 \draw (0.5,12.5) node {$\mQ_1$}; \draw (1.5,12.5) node{$\cdots$};  \draw (2.5,12.5) node{$\mQ_1$};
 \draw (0.5, 13.6) node {$\vdots$};  \draw (1.5, 13.5) node{$\cdot $};  \draw (2.5, 13.6) node{$\vdots$};
  \draw (0.5,14.5) node {$\mQ_1$}; \draw (1.5, 14.5) node {$\cdots$};  \draw (2.5,14.5) node{$\mQ_1$};
  
  \draw (3,12) grid (6,15);
 \draw (3.5,12.5) node{$\mQ_1$}; \draw (4.5,12.5) node{$\cdots$};  \draw (5.5,12.5) node{$\mQ_1$};
 \draw (3.5, 13.6) node{$\vdots$};  \draw (4.5, 13.5) node{$\cdots $};  \draw (5.5, 13.6) node{$\vdots$};
  \draw (3.5,14.5) node{$\mQ_1$}; \draw (4.5,14.5) node{$\cdots$};  \draw (5.5,14.5) node{$\mQ_1$};
  
   \draw (6,12) grid (9,15);
 \draw (6.5,12.5) node{$\mQ_1$}; \draw (7.5,12.5) node{$\cdots$};  \draw (8.5, 12.5) node{$\mQ_1$};
 \draw (6.5, 13.6) node{$\vdots$};  \draw (7.5, 13.5) node{$\cdots $};  \draw (8.5, 13.6) node{$\vdots$};
  \draw (6.5,14.5) node{$\mQ_1$}; \draw (7.5,14.5) node{$\cdots$};  \draw (8.5,14.5) node{$\mQ_1$};

          \draw[dashed](10, 13.5)--(14, 13.5);
  
  \draw (15,12) grid (18,15);
 \draw (15.5,12.5) node{$\mQ_1$}; \draw (16.5,12.5) node{$\cdots$};  \draw (17.5,12.5) node{$\mQ_1$};
 \draw (15.5, 13.6) node{$\vdots$};  \draw (16.5, 13.5) node{$\cdots $};  \draw (17.5, 13.6) node{$\vdots$};
  \draw (15.5, 14.5) node{$\mQ_1$}; \draw (16.5,14.5) node{$\cdots$};  \draw (17.5,14.5) node{$\mQ_1$};

    \draw (18,12) grid (21,15);
 \draw (18.5,12.5) node{$\mQ_1$}; \draw (19.5,12.5) node{$\cdots$};  \draw (20.5,12.5) node{$\mQ_1$};
 \draw (18.5, 13.6) node{$\vdots$};  \draw (19.5, 13.5) node{$\cdots $};  \draw (20.5, 13.6) node{$\vdots$};
  \draw (18.5,14.5) node{$\mQ_1$}; \draw (19.5,14.5) node{$\cdots$};  \draw (20.5,14.5) node{$\mQ_1$};

 \draw (-18,-18) grid (-15,-15);
 \draw (-17.5,-17.5) node{$\mQ_1$}; \draw (-16.5,-17.5) node{$\cdots$};  \draw (-15.5,-17.5) node{$\mQ_1$};
 \draw (-17.5, -16.6) node{$\vdots$};  \draw (-16.5, -16.5) node{$\cdots $};  \draw (-15.5, -16.6) node{$\vdots$};
  \draw (-17.5,-15.5) node{$\mQ_1$}; \draw (-16.5, -15.5) node{$\cdots$};  \draw (-15.5, -15.5) node{$\mQ_1$};

    \draw (-15,-18) grid (-12,-15);
 \draw (-14.5,-17.5) node{$\mQ_2$}; \draw (-13.5,-17.5) node{$\cdots$};  \draw (-12.5,-17.5) node{$\mQ_2$};
 \draw (-14.5, -16.6) node{$\vdots$};  \draw (-13.5, -16.5) node{$\cdots $};  \draw (-12.5, -16.6) node{$\vdots$};
  \draw (-14.5, -15.5) node{$\mQ_2$}; \draw (-13.5,-15.5) node{$\cdots$};  \draw (-12.5,-15.5) node{$\mQ_2$};

  \draw[dashed](-11,-16.5)--(-7,-16.5);

\draw (-6,-18) grid (-3,-15);
 \draw (-5.5,-17.5) node{$\mQ_1$}; \draw (-4.5,-17.5) node{$\cdots$};  \draw (-3.5,-17.5) node {$\mQ_1$};
 \draw (-5.5, -16.6) node{$\vdots$};  \draw (-4.5, -16.5) node {$\cdots $};  \draw (-3.5, -16.6) node {$\vdots$};
  \draw (-5.5,-15.5) node{$\mQ_1$}; \draw (-4.5,-15.5) node{$\cdots$};  \draw (-3.5, -15.5) node {$\mQ_1$};

\draw (-3,-18) grid (0,-15);
 \draw (-2.5,-17.5) node {$\mQ_2$}; \draw (-1.5,-17.5) node {$\cdots$};  \draw (-0.5,-17.5) node{$\mQ_2$};
 \draw (-2.5, -16.6) node {$\vdots$};  \draw (-1.5, -16.5) node{$\cdots $};  \draw (-0.5, -16.6) node{$\vdots$};
  \draw (-2.5,-15.5) node{$\mQ_2$}; \draw (-1.5, -15.5) node{$\cdots$};  \draw (-0.5,-15.5) node{$\mQ_2$};

 \draw (0,-18) grid (3,-15);
 \draw (0.5,-17.5) node {$\mQ_1$}; \draw (1.5,-17.5) node{$\cdots$};  \draw (2.5,-17.5) node{$\mQ_1$};
 \draw (0.5, -16.6) node {$\vdots$};  \draw (1.5, -16.5) node{$\cdot $};  \draw (2.5, -16.6) node{$\vdots$};
  \draw (0.5,-15.5) node {$\mQ_1$}; \draw (1.5, -15.5) node {$\cdots$};  \draw (2.5,-15.5) node{$\mQ_1$};
  
  \draw (3,-18) grid (6,-15);
 \draw (3.5,-17.5) node{$\mQ_2$}; \draw (4.5,-17.5) node{$\cdots$};  \draw (5.5,-17.5) node{$\mQ_2$};
 \draw (3.5, -16.6) node{$\vdots$};  \draw (4.5, -16.5) node{$\cdots $};  \draw (5.5, -16.6) node{$\vdots$};
  \draw (3.5,-15.5) node{$\mQ_2$}; \draw (4.5,-15.5) node{$\cdots$};  \draw (5.5,-15.5) node{$\mQ_2$};
  
   \draw (6,-18) grid (9,-15);
 \draw (6.5,-17.5) node{$\mQ_1$}; \draw (7.5,-17.5) node{$\cdots$};  \draw (8.5, -17.5) node{$\mQ_1$};
 \draw (6.5, -16.6) node{$\vdots$};  \draw (7.5, -16.5) node{$\cdots $};  \draw (8.5, -16.6) node{$\vdots$};
  \draw (6.5,-15.5) node{$\mQ_1$}; \draw (7.5,-15.5) node{$\cdots$};  \draw (8.5,-15.5) node{$\mQ_1$};
   
       \draw[dashed](10,-16.5)--(14,-16.5);
  
  \draw (15,-18) grid (18,-15);
 \draw (15.5,-17.5) node{$\mQ_2$}; \draw (16.5,-17.5) node{$\cdots$};  \draw (17.5,-17.5) node{$\mQ_2$};
 \draw (15.5, -16.6) node{$\vdots$};  \draw (16.5, -16.5) node{$\cdots $};  \draw (17.5, -16.6) node{$\vdots$};
  \draw (15.5, -15.5) node{$\mQ_2$}; \draw (16.5,-15.5) node{$\cdots$};  \draw (17.5,-15.5) node{$\mQ_2$};

    \draw (18,-18) grid (21,-15);
 \draw (18.5,-17.5) node{$\mQ_1$}; \draw (19.5,-17.5) node{$\cdots$};  \draw (20.5,-17.5) node{$\mQ_1$};
 \draw (18.5, -16.6) node{$\vdots$};  \draw (19.5, -16.5) node{$\cdots $};  \draw (20.5, -16.6) node{$\vdots$};
  \draw (18.5,-15.5) node{$\mQ_1$}; \draw (19.5,-15.5) node{$\cdots$};  \draw (20.5,-15.5) node{$\mQ_1$};
 \draw (-18,-12) grid (-15,-9);
 \draw (-17.5,-11.5) node{$\mQ_1$}; \draw (-16.5,-11.5) node{$\cdots$};  \draw (-15.5,-11.5) node{$\mQ_1$};
 \draw (-17.5, -10.6) node{$\vdots$};  \draw (-16.5, -10.5) node{$\cdots $};  \draw (-15.5, -10.6) node{$\vdots$};
  \draw (-17.5,-9.5) node{$\mQ_1$}; \draw (-16.5, -9.5) node{$\cdots$};  \draw (-15.5, -9.5) node{$\mQ_1$};

    \draw (-15,-12) grid (-12,-9);
 \draw (-14.5,-11.5) node{$\mQ_1$}; \draw (-13.5,-11.5) node{$\cdots$};  \draw (-12.5,-11.5) node{$\mQ_1$};
 \draw (-14.5, -10.6) node{$\vdots$};  \draw (-13.5, -10.5) node{$\cdots $};  \draw (-12.5, -10.6) node{$\vdots$};
  \draw (-14.5, -9.5) node{$\mQ_1$}; \draw (-13.5,-9.5) node{$\cdots$};  \draw (-12.5,-9.5) node{$\mQ_1$};

 \draw[dashed](-11,-10.5)--(-7,-10.5);

\draw (-6,-12) grid (-3,-9);
 \draw (-5.5,-11.5) node{$\mQ_1$}; \draw (-4.5,-11.5) node{$\cdots$};  \draw (-3.5,-11.5) node {$\mQ_1$};
 \draw (-5.5, -10.6) node{$\vdots$};  \draw (-4.5, -10.5) node {$\cdots $};  \draw (-3.5, -10.6) node {$\vdots$};
  \draw (-5.5,-9.5) node{$\mQ_1$}; \draw (-4.5,-9.5) node{$\cdots$};  \draw (-3.5, -9.5) node {$\mQ_1$};

\draw (-3,-12) grid (0,-9);
 \draw (-2.5,-11.5) node {$\mQ_1$}; \draw (-1.5,-11.5) node {$\cdots$};  \draw (-0.5,-11.5) node{$\mQ_1$};
 \draw (-2.5, -10.6) node {$\vdots$};  \draw (-1.5, -10.5) node{$\cdots $};  \draw (-0.5, -10.6) node{$\vdots$};
  \draw (-2.5,-9.5) node{$\mQ_1$}; \draw (-1.5, -9.5) node{$\cdots$};  \draw (-0.5,-9.5) node{$\mQ_1$};

 \draw (0,-12) grid (3,-9);
 \draw (0.5,-11.5) node {$\mQ_1$}; \draw (1.5,-11.5) node{$\cdots$};  \draw (2.5,-11.5) node{$\mQ_1$};
 \draw (0.5, -10.6) node {$\vdots$};  \draw (1.5, -10.5) node{$\cdot $};  \draw (2.5, -10.6) node{$\vdots$};
  \draw (0.5,-9.5) node {$\mQ_1$}; \draw (1.5, -9.5) node {$\cdots$};  \draw (2.5,-9.5) node{$\mQ_1$};
  
  \draw (3,-12) grid (6,-9);
 \draw (3.5,-11.5) node{$\mQ_1$}; \draw (4.5,-11.5) node{$\cdots$};  \draw (5.5,-11.5) node{$\mQ_1$};
 \draw (3.5, -10.6) node{$\vdots$};  \draw (4.5, -10.5) node{$\cdots $};  \draw (5.5, -10.6) node{$\vdots$};
  \draw (3.5,-9.5) node{$\mQ_1$}; \draw (4.5,-9.5) node{$\cdots$};  \draw (5.5,-9.5) node{$\mQ_1$};
  
   \draw (6,-12) grid (9,-9);
 \draw (6.5,-11.5) node{$\mQ_1$}; \draw (7.5,-11.5) node{$\cdots$};  \draw (8.5, -11.5) node{$\mQ_1$};
 \draw (6.5, -10.6) node{$\vdots$};  \draw (7.5, -10.5) node{$\cdots $};  \draw (8.5, -10.6) node{$\vdots$};
  \draw (6.5,-9.5) node{$\mQ_1$}; \draw (7.5,-9.5) node{$\cdots$};  \draw (8.5,-9.5) node{$\mQ_1$};

 \draw[dashed](10,-10.5)--(14,-10.5);
 
  \draw (15,-12) grid (18,-9);
 \draw (15.5,-11.5) node{$\mQ_1$}; \draw (16.5,-11.5) node{$\cdots$};  \draw (17.5,-11.5) node{$\mQ_1$};
 \draw (15.5, -10.6) node{$\vdots$};  \draw (16.5, -10.5) node{$\cdots $};  \draw (17.5, -10.6) node{$\vdots$};
  \draw (15.5, -9.5) node{$\mQ_1$}; \draw (16.5,-9.5) node{$\cdots$};  \draw (17.5,-9.5) node{$\mQ_1$};

    \draw (18,-12) grid (21,-9);
 \draw (18.5,-11.5) node{$\mQ_1$}; \draw (19.5,-11.5) node{$\cdots$};  \draw (20.5,-11.5) node{$\mQ_1$};
 \draw (18.5, -10.6) node{$\vdots$};  \draw (19.5, -10.5) node{$\cdots $};  \draw (20.5, -10.6) node{$\vdots$};
  \draw (18.5,-9.5) node{$\mQ_1$}; \draw (19.5,-9.5) node{$\cdots$};  \draw (20.5,-9.5) node{$\mQ_1$};

 \draw (-18,-15) grid (-15,-12);
 \draw (-17.5,-14.5) node{$\mQ_1$}; \draw (-16.5,-14.5) node{$\cdots$};  \draw (-15.5,-14.5) node{$\mQ_1$};
 \draw (-17.5, -13.6) node{$\vdots$};  \draw (-16.5, -13.5) node{$\cdots $};  \draw (-15.5, -13.6) node{$\vdots$};
  \draw (-17.5,-12.5) node{$\mQ_1$}; \draw (-16.5, -12.5) node{$\cdots$};  \draw (-15.5, -12.5) node{$\mQ_1$};

    \draw (-15,-15) grid (-12,-12);
 \draw (-14.5,-14.5) node{$\mQ_1$}; \draw (-13.5,-14.5) node{$\cdots$};  \draw (-12.5,-14.5) node{$\mQ_1$};
 \draw (-14.5, -13.6) node{$\vdots$};  \draw (-13.5, -13.5) node{$\cdots $};  \draw (-12.5, -13.6) node{$\vdots$};
  \draw (-14.5, -12.5) node{$\mQ_1$}; \draw (-13.5,-12.5) node{$\cdots$};  \draw (-12.5,-12.5) node{$\mQ_1$};

 \draw[dashed](-11,-13.5)--(-7,-13.5);

\draw (-6,-15) grid (-3,-12);
 \draw (-5.5,-14.5) node{$\mQ_1$}; \draw (-4.5,-14.5) node{$\cdots$};  \draw (-3.5,-14.5) node {$\mQ_1$};
 \draw (-5.5, -13.6) node{$\vdots$};  \draw (-4.5, -13.5) node {$\cdots $};  \draw (-3.5, -13.6) node {$\vdots$};
  \draw (-5.5,-12.5) node{$\mQ_1$}; \draw (-4.5,-12.5) node{$\cdots$};  \draw (-3.5, -12.5) node {$\mQ_1$};

\draw (-3,-15) grid (0,-12);
 \draw (-2.5,-14.5) node {$\mQ_1$}; \draw (-1.5,-14.5) node {$\cdots$};  \draw (-0.5,-14.5) node{$\mQ_1$};
 \draw (-2.5, -13.6) node {$\vdots$};  \draw (-1.5, -13.5) node{$\cdots $};  \draw (-0.5, -13.6) node{$\vdots$};
  \draw (-2.5,-12.5) node{$\mQ_1$}; \draw (-1.5, -12.5) node{$\cdots$};  \draw (-0.5,-12.5) node{$\mQ_1$};

 \draw (0,-15) grid (3,-12);
 \draw (0.5,-14.5) node {$\mQ_1$}; \draw (1.5,-14.5) node{$\cdots$};  \draw (2.5,-14.5) node{$\mQ_1$};
 \draw (0.5, -13.6) node {$\vdots$};  \draw (1.5, -13.5) node{$\cdot $};  \draw (2.5, -13.6) node{$\vdots$};
  \draw (0.5,-12.5) node {$\mQ_1$}; \draw (1.5, -12.5) node {$\cdots$};  \draw (2.5,-12.5) node{$\mQ_1$};
  
  \draw (3,-15) grid (6,-12);
 \draw (3.5,-14.5) node{$\mQ_1$}; \draw (4.5,-14.5) node{$\cdots$};  \draw (5.5,-14.5) node{$\mQ_1$};
 \draw (3.5, -13.6) node{$\vdots$};  \draw (4.5, -13.5) node{$\cdots $};  \draw (5.5, -13.6) node{$\vdots$};
  \draw (3.5,-12.5) node{$\mQ_1$}; \draw (4.5,-12.5) node{$\cdots$};  \draw (5.5,-12.5) node{$\mQ_1$};
  
   \draw (6,-15) grid (9,-12);
 \draw (6.5,-14.5) node{$\mQ_1$}; \draw (7.5,-14.5) node{$\cdots$};  \draw (8.5, -14.5) node{$\mQ_1$};
 \draw (6.5, -13.6) node{$\vdots$};  \draw (7.5, -13.5) node{$\cdots $};  \draw (8.5, -13.6) node{$\vdots$};
  \draw (6.5,-12.5) node{$\mQ_1$}; \draw (7.5,-12.5) node{$\cdots$};  \draw (8.5,-12.5) node{$\mQ_1$};

 \draw[dashed](10,-13.5)--(14,-13.5);
 
  \draw (15,-15) grid (18,-12);
 \draw (15.5,-14.5) node{$\mQ_1$}; \draw (16.5,-14.5) node{$\cdots$};  \draw (17.5,-14.5) node{$\mQ_1$};
 \draw (15.5, -13.6) node{$\vdots$};  \draw (16.5, -13.5) node{$\cdots $};  \draw (17.5, -13.6) node{$\vdots$};
  \draw (15.5, -12.5) node{$\mQ_1$}; \draw (16.5,-12.5) node{$\cdots$};  \draw (17.5,-12.5) node{$\mQ_1$};

    \draw (18,-15) grid (21,-12);
 \draw (18.5,-14.5) node{$\mQ_1$}; \draw (19.5,-14.5) node{$\cdots$};  \draw (20.5,-14.5) node{$\mQ_1$};
 \draw (18.5, -13.6) node{$\vdots$};  \draw (19.5, -13.5) node{$\cdots $};  \draw (20.5, -13.6) node{$\vdots$};
  \draw (18.5,-12.5) node{$\mQ_1$}; \draw (19.5,-12.5) node{$\cdots$};  \draw (20.5,-12.5) node{$\mQ_1$};

\draw (-18,9) grid (-15,12);
 \draw (-17.5,9.5) node{$\mQ_1$}; \draw (-16.5, 9.5) node{$\cdots$};  \draw (-15.5, 9.5) node{$\mQ_1$};
 \draw (-17.5, 10.6) node{$\vdots$};  \draw (-16.5, 10.5) node{$\cdots $};  \draw (-15.5, 10.6) node{$\vdots$};
  \draw (-17.5,11.5) node{$\mQ_1$}; \draw (-16.5, 11.5) node{$\cdots$};  \draw (-15.5, 11.5) node{$\mQ_1$};

    \draw (-15, 9) grid (-12,12);
 \draw (-14.5, 9.5) node{$\mQ_1$}; \draw (-13.5, 9.5) node{$\cdots$};  \draw (-12.5, 9.5) node{$\mQ_1$};
 \draw (-14.5, 10.6) node{$\vdots$};  \draw (-13.5, 10.5) node{$\cdots $};  \draw (-12.5, 10.6) node{$\vdots$};
  \draw (-14.5, 11.5) node{$\mQ_1$}; \draw (-13.5,11.5) node{$\cdots$};  \draw (-12.5,11.5) node{$\mQ_1$};

  \draw[dashed](-11,10.5)--(-7,10.5);

\draw (-6,9) grid (-3,12);
 \draw (-5.5,9.5) node{$\mQ_1$}; \draw (-4.5, 9.5) node{$\cdots$};  \draw (-3.5, 9.5) node {$\mQ_1$};
 \draw (-5.5, 10.6) node{$\vdots$};  \draw (-4.5, 10.5) node {$\cdots $};  \draw (-3.5, 10.6) node {$\vdots$};
  \draw (-5.5,11.5) node{$\mQ_1$}; \draw (-4.5,11.5) node{$\cdots$};  \draw (-3.5, 11.5) node {$\mQ_1$};

\draw (-3,9) grid (0,12);
 \draw (-2.5, 9.5) node {$\mQ_1$}; \draw (-1.5, 9.5) node {$\cdots$};  \draw (-0.5, 9.5) node{$\mQ_1$};
 \draw (-2.5, 10.6) node {$\vdots$};  \draw (-1.5, 10.5) node{$\cdots $};  \draw (-0.5, 10.6) node{$\vdots$};
  \draw (-2.5,11.5) node{$\mQ_1$}; \draw (-1.5, 11.5) node{$\cdots$};  \draw (-0.5,11.5) node{$\mQ_1$};

 \draw (0,9) grid (3,12);
 \draw (0.5, 9.5) node {$\mQ_1$}; \draw (1.5, 9.5) node{$\cdots$};  \draw (2.5, 9.5) node{$\mQ_1$};
 \draw (0.5, 10.6) node {$\vdots$};  \draw (1.5, 10.5) node{$\cdot $};  \draw (2.5, 10.6) node{$\vdots$};
  \draw (0.5,11.5) node {$\mQ_1$}; \draw (1.5, 11.5) node {$\cdots$};  \draw (2.5,11.5) node{$\mQ_1$};
  
  \draw (3,9) grid (6,12);
 \draw (3.5, 9.5) node{$\mQ_1$}; \draw (4.5, 9.5) node{$\cdots$};  \draw (5.5, 9.5) node{$\mQ_1$};
 \draw (3.5, 10.6) node{$\vdots$};  \draw (4.5, 10.5) node{$\cdots $};  \draw (5.5, 10.6) node{$\vdots$};
  \draw (3.5,11.5) node{$\mQ_1$}; \draw (4.5,11.5) node{$\cdots$};  \draw (5.5,11.5) node{$\mQ_1$};
  
   \draw (6,9) grid (9,12);
 \draw (6.5, 9.5) node{$\mQ_1$}; \draw (7.5, 9.5) node{$\cdots$};  \draw (8.5, 9.5) node{$\mQ_1$};
 \draw (6.5, 10.6) node{$\vdots$};  \draw (7.5, 10.5) node{$\cdots $};  \draw (8.5, 10.6) node{$\vdots$};
  \draw (6.5,11.5) node{$\mQ_1$}; \draw (7.5,11.5) node{$\cdots$};  \draw (8.5,11.5) node{$\mQ_1$};

          \draw[dashed](10, 10.5)--(14, 10.5);
  
  \draw (15,9) grid (18,12);
 \draw (15.5, 9.5) node{$\mQ_1$}; \draw (16.5, 9.5) node{$\cdots$};  \draw (17.5, 9.5) node{$\mQ_1$};
 \draw (15.5, 10.6) node{$\vdots$};  \draw (16.5, 10.5) node{$\cdots $};  \draw (17.5, 10.6) node{$\vdots$};
  \draw (15.5, 11.5) node{$\mQ_1$}; \draw (16.5,11.5) node{$\cdots$};  \draw (17.5,11.5) node{$\mQ_1$};

    \draw (18,9) grid (21,12);
 \draw (18.5, 9.5) node{$\mQ_1$}; \draw (19.5, 9.5) node{$\cdots$};  \draw (20.5, 9.5) node{$\mQ_1$};
 \draw (18.5, 10.6) node{$\vdots$};  \draw (19.5, 10.5) node{$\cdots $};  \draw (20.5, 10.6) node{$\vdots$};
  \draw (18.5,11.5) node{$\mQ_1$}; \draw (19.5,11.5) node{$\cdots$};  \draw (20.5,11.5) node{$\mQ_1$};


\draw[dashed](-15,-8) --(-15, -7); 
\draw[dashed](-12,-8) --(-12, -7);
\draw[dashed](-9,-8) --(-9, -7);
\draw[dashed](-6,-8) --(-6, -7);
\draw[dashed](-3,-8) --(-3, -7);
\draw[dashed](0,-8) --(0, -7);
\draw[dashed](3,-8) --(3, -7);
\draw[dashed](6,-8) --(6, -7);
 \draw[dashed](9,-8) --(9, -7);
 \draw[dashed](12,-8) --(12, -7);
 \draw[dashed](15,-8) --(15, -7);
 \draw[dashed](18,-8) --(18, -7);

  
 \end{tikzpicture}
}\\
\vspace{0.35cm}
Figure 1: Building $\mathcal{Q}_1^1$ starting with $\mathcal{Q}_1^0 = \mathcal{Q}_1$ 
and $\mathcal{Q}_2^0 = \mathcal{Q}_2$.
\end{center}


\vspace{0.64cm}
\vspace{0.1cm}

It is now easy to construct a non-rectifiable, repetitive Delone set. Indeed, 
let $(L_n)$ be a sequence of numbers $\geq 1$ going to infinity. Start with 
the square patches $\mQ_{1,1}$ and $\mQ_{1,2}$ illustrated below:

\vspace{5mm}

\begin{center}
 \resizebox{9cm}{!}{%
\begin{tikzpicture}


\draw (0,4) node{$\bullet$};\draw (1,4) node{$\bullet$};\draw (2,4) node{$\bullet$};\draw (3,4) node{$\bullet$};\draw (4,4) node{$\bullet$};

\draw (0,3) node{$\bullet$};\draw (1,3) node{ };\draw (2,3) node{$\bullet$};\draw (3,3) node{ };\draw (4,3) node{$\bullet$};

\draw (0,2) node{$\bullet$};\draw (1,2) node{ };\draw (2,2) node{$\bullet$};\draw (3,2) node{ };\draw (4,2) node{$\bullet$};

\draw (0,1) node{$\bullet$};\draw (1,1) node{ };\draw (2,1) node{$\bullet$};\draw (3,1) node{ };\draw (4,1) node{$\bullet$};

\draw (0,0) node{$\bullet$};\draw (1,0) node{$\bullet$};\draw (2,0) node{$\bullet$};\draw (3,0) node{$\bullet$};\draw (4,0) node{$\bullet$};

\draw(2.1, -1) node{$\mQ_{1,1}$}; 


\draw (8,4) node{$\bullet$};\draw (9,4) node{$\bullet$};\draw (10,4) node{$\bullet$};\draw (11,4) node{$\bullet$};\draw (12,4) node{$\bullet$};

\draw (8,3) node{$\bullet$};\draw (9,3) node{$\bullet$ };\draw (10,3) node{$\bullet$};\draw (11,3) node{ $\bullet$ };\draw (12,3) node{$\bullet$};

\draw (8,2) node{$\bullet$};\draw (9,2) node{$\bullet$ };\draw (10,2) node{$\bullet$};\draw (11,2) node{$\bullet$ };\draw (12,2) node{$\bullet$};

\draw (8,1) node{$\bullet$};\draw (9,1) node{$\bullet$ };\draw (10,1) node{$\bullet$};\draw (11,1) node{$\bullet$ };\draw (12,1) node{$\bullet$};

\draw (8,0) node{$\bullet$};\draw (9,0) node{$\bullet$};\draw (10,0) node{$\bullet$};\draw (11,0) node{$\bullet$};\draw (12,0) node{$\bullet$};

\draw(10.1, -1) node{$\mQ_{1,2}$}; 
\end{tikzpicture}}
\end{center}
\vspace{-0.6cm}
\begin{center} Figure 2: The initial patches $\mQ_{1,1}$ and $\mQ_{1,2}$. \end{center}


\vspace{0.4cm}

Next, proceed inductively: assuming we are given the patches $\mQ_{n,1} \!=:\! \mQ_1$ 
and $\mQ_{n,2} \!=:\! \mQ_2$, we let $\mQ_{n+1,1} \!:=\! \mQ_{1}^{new}$ and 
$\mQ_{n+1,2} \!:=\! \mQ_{2}^{new}$, where we have implemented the preceding 
procedure to obtain new patches for the constant $L_n$. This construction 
fits into that of Lemma \ref{lema-repetitivo}, except for that the patches 
$\mP_{n,1}, \mP_{n,2}$ that are involved do not correspond to $\mQ_{n,1}, \mQ_{n,2}$, 
respectively, but to the lower-left corners of these. (This is due to that the matchings 
above were made by identifying left to right sides, and lower to upper sides.) Hence, we have 
a repetitive Delone set $\mD$ containing copies of $\mQ_{n,1}$ and $\mQ_{n,2}$, for each 
$n \geq 1$. By Proposition \ref{no-bilipschitz}, $\mD$ cannot be $L_n$-bi-Lipschitz 
equivalent to $\mathbb{Z}^2$ for any $n \geq 1$. Since $L_n \to \infty$, the set 
$\mD$ is not bi-Lipschitz equivalent to $\mathbb{Z}^2$. 

\begin{rem} Clearly, the properties of being repetitive and non-rectifiable is not only valid 
for $\mD$ but also for all points in the closure of its orbit under the translation action. 
\end{rem}

We end this section with a brief discussion concerning the lack of linear repetitivity of our 
examples. Roughly, this amounts to saying that the ratio of the side-length of the new squares 
$\mathcal{Q}_1^{new},\mathcal{Q}_2^{new}$ compared to that of the original ones 
$\mathcal{Q}_1,\mathcal{Q}_2$ appearing along the construction tends to infinity at 
least along a subsequence. In our construction, this essentially comes from the condition 
$$N \geq N_* \geq \frac{10^{10} L^{10}}{(d-d')^4};$$
see Remark \ref{segunda-estimacion} and estimate (\ref{condition-kappa-3}).

Despite this, given an unbounded function $R' \! : (0,\infty) \to (0,\infty)$, we can artificially introduce 
steps in which the parameter $N$ does not satisfy (\ref{condition-kappa-3}) but just $N \!=\! 1$. Doing 
this infinitely many times, we obtain an infinite sequence of radii $r_k$ for which the repetitivity 
function $R$ satisfies $R(r_k) \leq R'(r_k)$. Notice that the resulting Delone set is still non-rectifiable, 
as an easy application of the triangular inequality shows that these steps do not obstruct the steps 
along which (\ref{condition-kappa-3}) is satisfied and that yield a contradiction to rectifiability. 
It is quite surprising that, actually, the choice $N=1$ allows requiring $R'$ just to be larger 
than some universal constant for infinitely many values, not necessarily being unbounded.

It is more interesting trying to obtain explicit estimates on the growth of the sequence $r_k$ provided 
$R'$ has some nice behaviour, for instance, if it grows faster than linearly. If we pay attention only 
to Remark \ref{segunda-estimacion}, then this requires $r_k$ to be of the order of a product 
$C^k L_k \cdots L_2 L_1$ for some universal constant $C > 1$ (where $L_n$ is the sequence of 
Lipschitz constants to be discarded so that $L_n \to \infty$) provided $R' (r_k)$ is larger than  $C L_k^{10} r_k$. 
Indeed, the value of the denominator $(d-d')$ can be bounded from below by a universal 
positive constant all along the construction. (Notice that (\ref{condition-kappa-2}) does not alter this issue.) 

Nevertheless, there is another condition, namely (\ref{condition-kappa-3}), which is more restrictive. Indeed, the corresponding 
expressions $(d_1' - d_1)$ and $(d_2 - d_2')$ that do appear in the denominators cannot be bounded from below by an 
universal constant. They can, however, be bounded from below by a sequence of positive numbers with finite sum smaller 
than 1, as for instance $1/cn^{1+\alpha}$ for an appropriate constant $c$. This allows controlling the value of the repetitivity 
function $R$ along a sequence of radii having the order of $c L_k \ldots L_1 (k!)^{1+\alpha}$, where $L_n \to \infty$.

In any case, we think that many steps of our construction can be improved. 
In this direction, it is very tempting thinking that, 
given a function $R'$ growing faster than linearly, a non-rectifiable Delone set 
exists so that $R (r)\leq R'(r)$ holds for all large-enough $r$. Besides, it is natural to think that linear repetitiveness 
is not the optimal condition to ensure rectifiability, and that some finite moment condition on the function $R$ 
still should imply this. We do not see, however, any potential application of these seemingly hard questions. 

\vspace{0.21cm}


\noindent{\bf IV. {\em Combining patches to get unique ergodicity.}} 
As we already mentioned, for a repetitive Delone set, unique ergodicity is equivalent to that 
all patches appearing in the tiling have a well-defined asymptotic density. This is closely related to 
\cite[Theorem 3.3]{solomyak}, but there is an anternate way to see this. Namely, since the Delone sets that we consider are subsets of $\ZZ^2$, we can use Wiener's unique ergodicity criterion for 
$\ZZ^d$-subshifts (see for example \cite{Lim12}). That is, the $\ZZ^2$-action on the closure of the orbit of $\mathcal{D}$ 
is uniquely ergodic if and only if for every $\mathcal{D}'$ in this orbit-closure and every patch $\mathcal{Q}$ of 
$\mathcal{D}'$, the limit  
$$
\lim_{n\to \infty}\frac{ \mbox{number of occurrences of } \mathcal{Q} \mbox{ in } \mathcal{D}' \big|_{[-n,n]^d} }{(2n+1)^d}.
$$
exists and is independent of $\mathcal{D}'$. Moreover, by the proof of \cite[Theorem 3.3]{solomyak}, this condition needs to 
be chequed only for a single $\mathcal{D}'$, say for $\mathcal{D}$. We claim that in the schema of Lemma \ref{lema-repetitivo}, 
this is the case whenever all asymptotic densities of occurrences of the patches $\mathcal{P}_{m,i}$ as blocks in 
$\mathcal{P}_{n,i}$, with $n \to \infty$, are equal to $1/2$. 

\vspace{0.1cm}

\begin{lem} {\em Assume that all asymptotic densities of occurrences of the patches $\mathcal{P}_{m,i}$ 
as blocks in $\mathcal{P}_{n,i}$, with $n \to \infty$, are equal to $1/2$. Then the limit}
\begin{equation}\label{a--probar}
\lim_{n\to \infty}\frac{ \mbox{number of occurrences of } \mathcal{Q} \mbox{ in } \mathcal{D} \big|_{[-n,n]^d} }{(2n+1)^d}
\end{equation}
{\em exists for every patch $\mathcal{Q}$ appearing in $\mathcal{D}$.}
\end{lem}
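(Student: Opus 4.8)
The plan is to count occurrences of $\mathcal{Q}$ scale by scale along the hierarchy (F1)--(F6) and to show that the resulting empirical frequencies form a Cauchy sequence, the common limit being the desired density. Fix $\mathcal{Q}$, of diameter $D$. Iterating (F3), for every $n\geq m$ the set $F_n$ is a disjoint union of translates $F_m+w$ of $F_m$, and by (F4) the restriction of $\mD$ (i.e.\ of $\mP_{n,1}$) to each such translate is a translated copy of $\mP_{m,1}$ or of $\mP_{m,2}$; call these \emph{tiles of type $1$} resp.\ \emph{of type $2$}. Take $m$ large enough that $F_m\supseteq\mathcal{Q}$, and let $a_{m,i}$ be the number of occurrences of $\mathcal{Q}$ lying inside $F_m$ under the pattern $\mP_{m,i}$. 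Writing $t_1^{(n)},t_2^{(n)}$ for the numbers of type-$1$, type-$2$ level-$m$ tiles contained in $F_n$, one has $|F_n|=(t_1^{(n)}+t_2^{(n)})|F_m|$. Every occurrence of $\mathcal{Q}$ in $\mD|_{F_n}$ that is contained in a single level-$m$ tile is an occurrence of $\mathcal{Q}$ in $\mP_{m,i}$ for the corresponding type $i$, and vice versa; the remaining occurrences are anchored within distance $D$ of the union of the boundaries of the tiles, so there are at most $c_2\,D\,|\partial F_m|\,(t_1^{(n)}+t_2^{(n)})$ of them, $c_2$ an absolute constant. Dividing by $|F_n|$, this gives
\[ \Big|\ \tfrac{1}{|F_n|}\#\{\text{occurrences of }\mathcal{Q}\text{ in }\mD|_{F_n}\}\ -\ \tfrac{t_1^{(n)}a_{m,1}+t_2^{(n)}a_{m,2}}{(t_1^{(n)}+t_2^{(n)})|F_m|}\ \Big|\ \le\ \eta_m:=\tfrac{c_2\,D\,|\partial F_m|}{|F_m|}. \]

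Now I would invoke the hypothesis: $t_i^{(n)}/(t_1^{(n)}+t_2^{(n)})\to 1/2$ as $n\to\infty$, for $i=1,2$. Hence both $\limsup_n$ and $\liminf_n$ of $\tfrac1{|F_n|}\#\{\text{occurrences of }\mathcal{Q}\text{ in }\mD|_{F_n}\}$ lie within $\eta_m$ of $\tfrac1{2|F_m|}(a_{m,1}+a_{m,2})$. Since in our construction $F_m$ is a square of side tending to $\infty$ with $m$, we have $|\partial F_m|/|F_m|\to 0$, i.e.\ $\eta_m\to 0$; letting $m\to\infty$ shows simultaneously that $\limsup=\liminf$ (so the limit exists) and that the numbers $\tfrac1{2|F_m|}(a_{m,1}+a_{m,2})$ converge, to a value I call $\nu(\mathcal{Q})$.

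It remains to replace the exhaustion $(F_n)$ by the boxes $([\![-n,n]\!]^2)$. I would run the same count with $[\![-N,N]\!]^2$ in place of $F_n$: this box lies in some $F_n$, hence (iterating (F3)) it decomposes into complete level-$m$ tiles together with a boundary region of $o(N^2)$ lattice points. Here one uses $\mathrm{diam}(F_{n+1})/\mathrm{diam}(F_n)\to\infty$, so that $[\![-N,N]\!]^2$ is, up to $o(N^2)$ points, an exact union of complete tiles of some intermediate level $n'=n'(N)\to\infty$ with $\mathrm{diam}(F_{n'})=o(N)$, each of which is a copy of some $\mP_{n',i}$ whose level-$m$ sub-tiles split between the two types in proportion $\tfrac12+o(1)$ by hypothesis; consequently the type-$i$ level-$m$ tiles inside the box again have density $\tfrac12+o(1)$. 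One obtains that $\limsup_N$ and $\liminf_N$ of $(2N+1)^{-2}\#\{\text{occurrences of }\mathcal{Q}\text{ in }\mD|_{[\![-N,N]\!]^2}\}$ lie within $\eta_m+o(1)$ of $\tfrac1{2|F_m|}(a_{m,1}+a_{m,2})$, and letting $m\to\infty$ gives that the limit \eqref{a--probar} exists and equals $\nu(\mathcal{Q})$.

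The routine but delicate point — the main obstacle — is to keep every "boundary" contribution genuinely negligible: the interfaces between the level-$m$ tiles (controlled by $\eta_m\to 0$, i.e.\ by the Følner property of the squares $F_m$) and the outer boundary of the averaging window $[\![-N,N]\!]^2$ when it is not one of the $F_n$ (controlled by the fast growth $\mathrm{diam}(F_{n+1})/\mathrm{diam}(F_n)\to\infty$ of the hierarchy). Once these are in hand, the arithmetic — that the frequencies computed at two scales $m<m'$ differ by at most $\eta_m+\eta_{m'}$ — is immediate from the assumption that all block-type densities equal $1/2$.
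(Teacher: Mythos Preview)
Your argument is correct and follows essentially the same route as the paper: decompose into level-$m$ blocks, control the boundary contributions by $|\partial F_m|/|F_m|\to 0$, use the hypothesis that the block-type proportions tend to $1/2$ to pin down the frequencies, and pass between the hierarchy $(F_n)$ and the boxes $[\![-N,N]\!]^2$ (what the paper calls a ``Whitney like decomposition''). One small remark: the fast-growth condition $\mathrm{diam}(F_{n+1})/\mathrm{diam}(F_n)\to\infty$ you invoke is not actually needed---choosing $n'=n'(N)$ maximal with $\mathrm{diam}(F_{n'})\le\sqrt N$ already gives $n'\to\infty$ and $\mathrm{diam}(F_{n'})=o(N)$, which is all the tiling step requires.
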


\noindent{\bf Proof.} First, an easy application of a Whitney like decomposition shows that the limit (\ref{a--probar}) 
exists if and only if the limit 
\begin{equation}\label{new-to-prove}
\lim_{n\to \infty}\frac{ \mbox{number of occurrences of } \mathcal{Q} \mbox{ in } \mathcal{P}_{n,j} }
{\big| \mathcal{P}_{n,j} \big|}
\end{equation}
exists and is independent of $j$. To show that the last condition holds, 
for each $m \geq 1$ and $j \in \{1,2\}$, denote 
$$d_{m,j} := \frac{ \mbox{number of occurrences of } \mathcal{Q} \mbox{ in } \mathcal{P}_{m,j} }
{\big| \mathcal{P}_{m,j} \big|}.$$
Besides, denote $d_{i,j}^{m \to n}$ the density in which the patch $\mathcal{P}_{m,i}$ appears as a block 
of $\mathcal{P}_{n,j}$. Let $\ell$ be the side length of $\mathcal{Q}$, and assume that $m$ is large enough 
so that $\ell$ is smaller than the side length of each $\mathcal{P}_{m,j}$. If we divide a given square 
$\mathcal{P}_{n,j}$ into the blocks $\mathcal{P}_{m,1}$ and $\mathcal{P}_{m,2}$, we have 
$$d_{m,1} d_{1,1}^{m \to n} + d_{m,2} d_{2,1}^{m \to n} 
\leq d_{n,1} 
\leq d_{m,1} d_{1,1}^{m \to n} + d_{m,2} d_{2,1}^{m \to n} + 
       \frac{2\ell}{\mbox{ side length of } \mathcal{P}_{m,j}}$$ 
and 
$$d_{m,1} d_{1,2}^{m \to n} + d_{m,2} d_{2,2}^{m \to n} 
\leq d_{n,2} 
\leq d_{m,1} d_{1,2}^{m \to n} + d_{m,2} d_{2,2}^{m \to n} + 
       \frac{2 \ell}{\mbox{ side length of } \mathcal{P}_{m,j}}.$$
Indeed, the left-side inequalities are obvious, while in the right-side expression, the extra term appears because of the 
possibility that a copy of $\mathcal{Q}_{m,j}$ overlaps with two different blocks (in either left-to-rigth or bottom-to-top direction). 

By hypothesis, for all fixed $m$ and each $i,j$ in $\{1,2\}$, the value of $d_{i,j}^{m\to n}$ converges to $1/2$ as $n \to \infty$. 
It thus follows from the inequalities above that given $\varepsilon > 0$, there exist integers $m$ and $N$ such that for all 
$n \geq N$, 
$$\frac{d_{m,1} + d_{m,2}}{2} - \varepsilon \leq d_{n,1} \leq \frac{d_{m,1} + d_{m,2}}{2} + \varepsilon,$$
$$\frac{d_{m,1} + d_{m,2}}{2} - \varepsilon \leq d_{n,2} \leq \frac{d_{m,1} + d_{m,2}}{2} + \varepsilon.$$
In particular, there exists a sequence of integers $n_k$ such that for each $k$ and all $n \geq n_{k+1}$, 
\begin{equation}\label{unila1}
\frac{d_{n_k,1} + d_{n_k,2}}{2} - \frac{1}{k} \leq d_{n,1} \leq \frac{d_{n_k,1} + d_{n_k,2}}{2} + \frac{1}{k},
\end{equation}
$$\frac{d_{n_k,1} + d_{n_k,2}}{2} - \frac{1}{k}  \leq d_{n,2} \leq \frac{d_{n_k,1} + d_{n_k,2}}{2} + \frac{1}{k}.$$
As a consequence, both $(d_{n,1})$ and $(d_{n,2})$ are Cauchy sequences, hence they converge to certain 
limits $d_1$ and $d_2$, respectively. Letting $k \to \infty$ in (\ref{unila1}) along $n = n_{k+1}$, we obtain 
$$\frac{d_1 + d_2}{2} \leq d_1 \leq \frac{d_1 + d_2}{2},$$
hence $d_1 = d_2$, as desired. 
$\hfill\square$

\vspace{0.275cm}

In order to guarantee the hypothesis of the preceding lemma and hence 
proving unique ergodicity of the translation action on the orbit-closure of $\mathcal{D}$, 
we will need to crucially modify the preceding construction. As above, we will only use two types 
of patches at each step, and we will start with the same (lower-left corners of the) patches illustrated
in Figure 2. Therefore, the density of points in the resulting Delone set will be equal to 
$$\frac{1}{2} \cdot \frac{16}{16} + \frac{1}{2} \cdot \frac{10}{16} = \frac{13}{16}.$$

We begin by introducing the {\em transition matrices}
$$\mathcal{A}^{n \to n+1} = \big( d_{i,j}^{n \to n+1} \big),$$
where, as before, $d_{i,j}^{n\to n+1}$ stands for the density in which the patch 
$\mP_{n,i}$ appears in $\mP_{n+1,j}$, with $i,j$ in $\{ 1,2 \}$. 
If we let 
$$\mathcal{A}^{m \to n} 
= 
\mathcal{A}^{m \to m+1} \mathcal{A}^{m+1 \to m+2} \cdots \mathcal{A}^{n-1 \to n}$$
and denote $d_{i,j}^{m\to n}$ the entries of $\mathcal{A}^{m \to n}$, then $d_{i,j}^{m\to n}$ 
represents, as before, the density in which the patch $\mP_{m,i}$ appears in $\mP_{n,j}$. 
In particular, if $d_i$ is the density of points in the starting patch $\mP_{1,i}$, where 
$i \in \{1,2\}$, then the density of points in $\mP_{n,i}$ equals 
$$d_{n,i} := d_0 \cdot d_{0,i}^{1\to n} + d_1 \cdot d_{1,i}^{1 \to n}.$$

To simplify, we will only work with transition matrices of the form
\begin{equation}\label{tipo-de-matriz}
\mathcal{A}^{n \to n+1} = \left(
\begin{array}
{cc}
1/2 +\delta_n & 1/2 -\delta_n \\
1/2 -\delta_n & 1/2 +\delta_n \\
\end{array}
\right). 
\end{equation}
To deal with these matrices, we will strongly use the identity
\begin{equation}\label{multiplicacion}
\left(
\begin{array}
{cc}
1/2 +\alpha & 1/2 -\alpha \\
1/2 -\alpha & 1/2 +\alpha \\
\end{array}
\right) 
\left(
\begin{array}
{cc}
1/2 +\beta & 1/2 -\beta \\
1/2 -\beta & 1/2 +\beta \\
\end{array}
\right) 
= 
\left(
\begin{array}
{cc}
1/2+ 2 \alpha \beta & 1/2 - 2 \alpha \beta \\
1/2 - 2 \alpha \beta & 1/2 + 2 \alpha \beta \\
\end{array}
\right).
\end{equation}
This shows, in particular, that for $\alpha,\beta$ between $0$ and $1/2$, 
the $\|\cdot\|_{\infty}$ distance between 
$$\left(
\begin{array}
{cc}
1/2 +\alpha & 1/2 -\alpha \\
1/2 -\alpha & 1/2 +\alpha \\
\end{array}
\right) 
\left(
\begin{array}
{cc}
1/2 +\beta & 1/2 -\beta \\
1/2 -\beta & 1/2 +\beta \\
\end{array}
\right)
\qquad{ \mbox{and} } \qquad 
\left(
\begin{array}
{cc}
1/2 & 1/2 \\
1/2 & 1/2 \\
\end{array}
\right)$$
is less than or equal to $2 \beta$ times 
the $\|\cdot\|_{\infty}$ distance between  
$$\left(
\begin{array}
{cc}
1/2 +\alpha & 1/2 -\alpha \\
1/2 -\alpha & 1/2 +\alpha \\
\end{array}
\right)
\qquad{ \mbox{ and } } \qquad 
\left(
\begin{array}
{cc}
1/2 & 1/2 \\
1/2 & 1/2 \\
\end{array}
\right).$$

As before, we start the construction with the (lower-left corners of the) patches illustrated 
in Figure 2. Next, we proceed by induction: assuming that we have constructed the patches 
$\mQ_{n,1} =: \mQ_1$ and $\mQ_{n,2} =: \mQ_2$, we let $\mQ_{n+1,1}' := \mQ_1^{new}$ 
and $\mQ_{n+1,2}' := \mQ_2^{new}$, where we have implemented the construction of new 
patches of the preceding paragraph for the constant $L_n := n$ and 
$$d_2' := d_{n,2} - \frac{d_{n,2} - d_{n,1}}{3}, \quad d_1' = d_{n,1} + \frac{d_{n,2} - d_{n,1}}{3}.$$ 
By construction, this procedure consists of a certain number $\ell \!=\! \ell_n$ of intermediate 
steps along which all transition matrices are of the form (\ref{tipo-de-matriz}). In particular, 
by the previous discussion, we did not lose any amount of closeness to the desired limit matrix   
$\left(
\begin{array}
{cc}
1/2 & 1/2 \\
1/2 & 1/2 \\
\end{array}
\right)$ 
along this construction. 

Next, to construct $\mQ_{n+1,1}$ and $\mQ_{n+1,2}$, we mix (and match) 
together $\mQ_{n+1,1}'$ and $\mQ_{n+1,2}'$ appropriately, as shown in Figure 3: 

\vspace{5mm}

\begin{center}
 \resizebox{11cm}{!}{%
\begin{tikzpicture}

\draw[step=1.5cm] (0,0) grid (4.5,4.5);
\draw (0.7,3.7) node{$\mQ_{n+1,2}'$};\draw (2.2, 3.7) node{$\mQ_{n+1,1}'$};\draw (3.7, 3.7) node{$\mQ_{n+1,2}'$}; 

\draw (0.7, 2.2) node{$\mQ_{n+1,1}'$};\draw (2.2,2.2) node{$\mQ_{n+1,1}'$};\draw (3.7,2.2) node{$\mQ_{n+1,1}'$}; 
  
\draw (0.7, 0.7) node{$\mQ_{n+1,2}'$};\draw (2.2,0.7) node{$\mQ_{n+1,1}'$};\draw (3.7,0.7) node{$\mQ_{n+1,2}'$}; 

\draw(2.3, -0.8) node{$\mQ_{n+1,1}$}; 


\draw [step=1.5cm] (7.5,0) grid (12,4.5);
\draw (8.2, 3.7) node{$\mQ_{n+1,1}'$};\draw (9.7,3.7) node{$\mQ_{n+1,2}'$};\draw (11.2, 3.7) node{$\mQ_{n+1,1}'$}; 

\draw (8.2,2.2) node{$\mQ_{n+1,2}'$};\draw (9.7,2.2) node{$\mQ_{n+1,2}'$};\draw (11.2,2.2) node{$\mQ_{n+1,2}'$}; 
  
\draw (8.2,0.7) node{$\mQ_{n+1,1}'$};\draw (9.7,0.7) node{$\mQ_{n+1,2}'$};\draw (11.2,0.7) node{$\mQ_{n+1,1}'$}; 
 
\draw(9.7, -0.8) node{$\mQ_{n+1,2}$}; 
\end{tikzpicture}}
\end{center}

\vspace{-0.7cm}

\begin{center} Figure 3: Building $\mQ_{n+1,1}$ and $\mQ_{n+1,2}$ 
starting with $\mQ_{n+1,1}'$ and $\mQ_{n+1,2}'$. \end{center}

\vspace{0.45cm}

Letting $\mP_{n+1,i}$ be the lower-left corner of $\mQ_{n+1,i}$, with $i \in \{1,2\}$, we 
have that the density of $\mP_{n+1,1}'$ inside $\mP_{n+1,1}$ (resp. $\mP_{n+1,2}$) equals 
$5/9 = 1/2 + 1/18$ (resp. $4/9 = 1/2 - 1/18$). Similarly, the density of $\mP_{n+1,2}'$ inside 
$\mP_{n+1,1}$ (resp. $\mP_{n+1,2}$) equals $4/9 = 1/2 - 1/18$ (resp. $5/9 = 1/2 + 1/18$). 

By the construction, the transition matrix from the patch $\mP_{n,i}$ (hence of any $\mP_{m,i}$, 
with $m \leq n$) to each $\mP_{n+1,j}$ is of the form (\ref{tipo-de-matriz}). In particular, we have 
$d_{n,2} > d_{n,1}$ for all $n$. Moreover, due to (\ref{multiplicacion}), the $\| \cdot \|_{\infty}$ 
distance between any transition matrix $\mathcal{M}^{m \to n+1}$ and 
$\left(
\begin{array}
{cc}
1/2 & 1/2 \\
1/2 & 1/2 \\
\end{array}
\right)$
is less than or equal to $\frac{1}{9}$ times the $\| \cdot \|_{\infty}$ 
distance between the transition matrix $\mathcal{M}^{m \to n}$ and 
$\left(
\begin{array}
{cc}
1/2 & 1/2 \\
1/2 & 1/2 \\
\end{array}
\right) \!.$
Letting $n$ go to infinity, this yields the desired convergence. 

\vspace{0.5cm}


\noindent{\bf V. {\em Prescribing the (shape of the) set of invariant probability measures.}} There 
are many ways to realize arbitrary Choquet simplices, one of which is given by the next lemma. 
For the statement, given positive integers $k, q$, we let $\triangle(k,q)$ be the convex hull 
of the set of vectors $\frac{e_1}{q}, \ldots, \frac{e_k}{q}$, where $\{ e_1,\ldots, e_k \}$ 
stands for the canonical orthonormal basis of $\mathbb{R}^k$.

\vspace{0.1cm}

\begin{lem}\label{Choquet}
Let $\mathcal{K}$ be a Choquet simplex,  let $(q_n)$ be an increasing sequence of positive 
integers such that each $q_n$ divides $q_{n+1}$, and let $(r_n)$ be a sequence of positive 
integers satisfying $r_n \sqrt{q_{n}}~<~\sqrt{q_{n+1}}$. Then there exists 
a sequence $(A_n)$ of $k_n \!\times k_{n+1}$ matrices with positive integer entries such that, 
passing to a subsequence of $(q_n)$ if necessary (as well as to the corresponding subsequence 
of $(r_n)$), we have:
\begin{itemize}
\item [{\bf (K1)}] $k_1 = \max\{3,d\}$ if $\mathcal{K}$ has dimension $d$, 
and $k_1 = 3$ if $\mathcal{K}$ has infinite dimension. 
\item[{\bf (K2)}] $k_{n}\geq 3$, for all $n$;
\item[{\bf (K3)}] $A_n (1, j) = 1$, for every $j \in [\![ 1, k_{n+1} ]\!]$; 
\item[{\bf (K4)}] $\sum_{i=1}^{k_n} A_n(i,j) = \frac{q_{n+1}}{q_{n}}$, for every $j \in [\![ 1,k_{n+1} ]\!]$;
\item[{\bf (K5)}] $\min \big\{ A_{n}(i,j) \!: 2 \leq i \leq k_n, 1 \leq j \leq k_{n+1} \big\} \geq k_{n+1}$;
\item[{\bf (K6)}] $\min \big\{ A_{n}(i,j) \!: 2 \leq i \leq k_n, 1 \leq j \leq k_{n+1} \big\} \geq r_n\sqrt{q_{n+1}}$; 
\item[{\bf (K7)}] $\mathcal{K}$ is affine homeomorphic to the inverse limit
$$
\lim_{\leftarrow n} \big( \triangle(k_n,q_{n}), A_n \big) := 
\Big\{ (u_n) \in \prod_{n\geq 1}\triangle(k_n, p_{n}) 
\!: A_n (u_{n+1}) = u_n, \mbox{ for all } n \Big\}.
$$
\end{itemize}
\end{lem}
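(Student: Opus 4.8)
The plan is to deduce the lemma from the classical structure theorem for metrizable Choquet simplices and then to encode it arithmetically so that the normalisations (K1)--(K7) hold. Recall (Lazar--Lindenstrauss; equivalently, via the dimension-group presentation of Effros--Handelman--Shen, which also yields \emph{rational} bonding maps) that $\mathcal K$ is affine homeomorphic to an inverse limit $\lim_{\leftarrow}(\Delta_{m_n},\theta_n)$, where $\Delta_m=\{x\in\RR^m_{\ge0}:\sum_i x_i=1\}$ is the standard $(m{-}1)$-simplex and $\theta_n$ is the affine map attached to a column-stochastic rational $m_n\times m_{n+1}$ matrix $T_n$. Since prepending a term to an inverse sequence leaves the limit unchanged (the new first coordinate becomes a function of the second), and since a zero-dimensional term forces $\mathcal K$ to be a point, after padding and telescoping I may assume $m_n\ge2$ for all $n$, that $m_n\to\infty$ when $\mathcal K$ is infinite-dimensional, and that $m_1+1$ equals the value of $k_1$ prescribed in (K1) (the point and segment cases being arranged by hand). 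Set $k_n:=m_n+1$; then (K1)--(K2) hold.

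The core step is to realise $\Delta_{m_n}$ as a cross-section of $\triangle(k_n,q_n)$ and $\theta_n$ by an integer matrix. Fix a summable sequence $\rho_n\downarrow0$ and put $\widetilde T_n:=(1-\rho_n)T_n+\rho_nU_n$, where $U_n$ has all entries $1/m_n$; thus every entry of $\widetilde T_n$ is $\ge\rho_n/m_n$ and $\|\widetilde T_n-T_n\|_\infty\le\rho_n$. Writing $t_n:=q_{n+1}/q_n\in\N$ and labelling the rows and columns of a $k_n\times k_{n+1}$ matrix by $0,\dots,m_n$ and $0,\dots,m_{n+1}$, I define $A_n(0,j):=1$ for all $j$ (the distinguished first row); $A_n(i,j):=\lfloor(t_n-1)\widetilde T_n(i,j)\rfloor$ for $1\le i\le m_n-1,\ j\ge1$, and $A_n(i,0):=\lfloor(t_n-1)/m_n\rfloor$; and $A_n(m_n,j):=(t_n-1)-\sum_{i=1}^{m_n-1}A_n(i,j)$. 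Every column then sums to $t_n=q_{n+1}/q_n$, giving (K3)--(K4); moreover $\sum_i\widetilde T_n(i,j)=1$ forces $A_n(m_n,j)\ge(t_n-1)\widetilde T_n(m_n,j)\ge(t_n-1)\rho_n/m_n$, and every entry in rows $1,\dots,m_n$ is $\ge(t_n-1)\rho_n/m_n-1$. The affine map $\psi_n\colon\triangle(k_{n+1},q_{n+1})\to\triangle(k_n,q_n)$ induced by $A_n$ has barycentric matrix $A_n/t_n$, whose first row is the constant $1/t_n$; hence every image point has first coordinate $1/q_{n+1}$, so the image lies in the cross-section $\Sigma_n:=\{x\in\triangle(k_n,q_n):x_1=1/q_{n+1}\}$, an affine copy of $\Delta_{m_n}$, and a direct estimate shows that under these identifications $\psi_n|_{\Sigma_{n+1}}$ differs from $\theta_n$ by at most $O(m_n/t_n)+O(1/t_{n+1})+\rho_n$ in sup norm.

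It remains to pass to a subsequence and to conclude. Replace $(q_n)$ by a subsequence, re-indexed again as $(q_n)$, telescoping $(\Delta_{m_n},\theta_n)$ accordingly (a composition of column-stochastic matrices is column-stochastic); the hypothesis $r_n\sqrt{q_n}<\sqrt{q_{n+1}}$ is inherited because $q_n\mid q_{n+1}$. Since the shape data $(m_n,\rho_n,\theta_n)$ is fixed \emph{before} this choice, I may take $t_n=q_{n+1}/q_n$ so large that $(t_n-1)\rho_n/m_n-1\ge\max\{k_{n+1},\,r_n\sqrt{q_{n+1}}\}$ --- which only requires $q_{n+1}$ to be at least a fixed multiple of $(m_nr_nq_n/\rho_n)^2$, attainable along the subsequence since $q_n\to\infty$ --- thereby obtaining (K5)--(K6) and simultaneously making $\sum_n\big(O(m_n/t_n)+O(1/t_{n+1})+\rho_n\big)$ finite. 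Finally, every coordinate $u_n$ of a point of $\lim_{\leftarrow}(\triangle(k_n,q_n),\psi_n)$ lies in $\mathrm{im}(\psi_n)\subseteq\Sigma_n$, so this inverse limit equals $\lim_{\leftarrow}(\Sigma_n,\psi_n|_{\Sigma_{n+1}})$; under $\Sigma_n\cong\Delta_{m_n}$ this is an inverse system whose bonding maps differ from the $\theta_n$ by the summable amounts above, whence by the standard perturbation lemma for inverse limits of finite-dimensional simplices its limit is affine homeomorphic to $\lim_{\leftarrow}(\Delta_{m_n},\theta_n)\cong\mathcal K$, which is (K7).

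The main obstacle is the coordination of the constraints rather than any individual estimate. Imposing (K3) pushes all images into the thin cross-sections $\Sigma_n$, so the induced bonding maps cannot resemble inclusions and one must verify that the resulting slice system still reconstructs $\mathcal K$; and the lower bounds (K5)--(K6), together with the column sums $q_{n+1}/q_n$ --- over which one has only subsequential control --- must be balanced against keeping the barycentric rounding errors summable, which is feasible precisely because the shape data $(m_n,\theta_n,\rho_n)$ is frozen before the arithmetic parameters $q_n$ are selected. A secondary (routine but fiddly) point is fitting the low-dimensional instances, where $\mathcal K$ is a point or a segment, into the scheme with $k_1=3$.
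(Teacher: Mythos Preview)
Your argument is essentially correct and follows the same architecture as the paper's proof: start from a representation of $\mathcal{K}$ as an inverse limit of finite simplices, pass to a subsequence (telescoping) so that the ratios $q_{n+1}/q_n$ are large enough to force the entry lower bounds (K5)--(K6), and enlarge each simplex by one coordinate to obtain the constant first row (K3). The difference lies in how the integer-matrix representation with prescribed column sums is obtained. The paper quotes \cite{CP13} directly for a sequence $(B_\ell)$ of nonnegative integer matrices already satisfying (K4), (K7) and an entry lower bound, so that only a telescoping and the addition of a first row remain; (K7) is then preserved by two further lemmas from \cite{CP13}. You instead start from Lazar--Lindenstrauss (rational stochastic bonding maps), perturb each $T_n$ towards the uniform matrix to force entries bounded below, round to integers summing to $t_n-1$, and recover (K7) through a perturbation--stability lemma for inverse limits of simplices. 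This is more self-contained but trades one black box for another: the stability statement ``summably close bonding maps yield affinely homeomorphic inverse limits'' is standard but should be cited precisely (it is implicit in the approximate--intertwining arguments of the Bratteli/dimension-group literature). One point to tighten: when you choose the subsequence you need $t_n$ to dominate both $k_{n+1}=m_{\ell_{n+1}}+1$ and $r_{\ell_n}\sqrt{q_{\ell_{n+1}}}$, and the former involves $m_{\ell_{n+1}}$, which depends on the very index you are selecting; this is harmless once you note that the Lazar--Lindenstrauss sequence $(m_n)$ can be taken to grow slowly (e.g.\ at most linearly) while $q_n$ grows at least geometrically by the divisibility hypothesis, but it deserves a sentence. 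The paper's route avoids this bookkeeping because the entry growth in \cite{CP13} is tied to $q_n$ from the outset.
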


\begin{proof} 
By \cite[Lemmas 9 and 13]{CP13}, there exists a sequence $(B_{\ell})$ of 
$k_{\ell} \times k_{\ell +1}$ matrices with positive integer entries such that 
$k_{\ell} \geq 2$ for all $\ell$ and verifying (K4), (K7) and 
$$k_{\ell +1} \leq \min \big\{ B_{\ell}(i,j) \!: 1 \leq i \leq k_\ell, 1 \leq j \leq k_{\ell+1} \big\}.$$
Next, notice that since all matrix entries are $\geq 1$, using (K3) we easily obtain by induction 
that for every $m>m'$, all $i \in [\![ 1, k_{m'} ]\!]$ and all $j \in [\![ 1, k_{m+1} ]\!]$,
$$
B_{m'} \cdots B_m ( i,j ) \geq \frac{q_{m+1}}{q_{m'+1}}.
$$ 
Let $\ell_1:=1$, and given $\ell_n$, define $\ell_{n+1}$ so that 
$q_{\ell_{n+1}}>  (1+q_{\ell_n+1})^2 \hspace{0.06cm} r_{\ell–n}^2$. Then, the 
matrices $\tilde{A}_n := B_{\ell_n}\cdots B_{\ell_{n+1}-1}$ satisfy (K4), (K7), and 
$$\min \big\{ \tilde{A}_{n}(i,j) \!: 1 \leq i \leq k_{\ell_n}, 1 \leq j \leq k_{\ell_{n+1}} \big\} 
\geq \max \{ k_{\ell_{n+1}},  r_{\ell_n} \sqrt{q_{\ell_{n+1}}} \}.$$
Finally, defining $A_n$ as the $(k_{\ell_n}+1)\times (k_{\ell_{n+1}}+1)$ matrix with columns
$$
\big( A_n(\cdot,1) \big) 
= \big( A_n(\cdot, 2) \big) 
=\left( \begin{array}{l}
                                                                                1\\
                         \tilde{A}_n(1,1)-1\\
\tilde{A}_n(2,1)\\
\vdots\\
\tilde{A}_n(k_{\ell_n},1)
                                                                                 \end{array}
\right), \qquad 
\big( A_n(\cdot,k+1) \big) 
= \left( \begin{array}{l}
1\\
\tilde{A}_n(1,k)-1\\
\tilde{A}_n(2,k)\\
\vdots\\
\tilde{A}_n(k_{\ell_n},k)\\                                            
\end{array}
    \right),
$$
where $2 \leq k \leq k_{\ell_n}$, we have that 
all properties (K2), (K3), (K4), (K5) and (K6) are satisfied 
with respect to the subsequences $(q_{\ell_n})$, $(r_{\ell_n})$. 
By \cite[Lemmas 1 and 2]{CP13}, property (K7) is also satisfied. 
Finally, property (K1) follows from \cite[Lemma 9]{CP13} and the 
proof of \cite[Lemma 13]{CP13} (this is independent on the choice of $(q_n)$).
\end{proof}

\vspace{0.2cm}

In all what follows, we will assume that $\mathcal{K}$ is not reduced to a singleton.  In other words, 
we will search for the construction of a non uniquely ergodic translation action over the orbit of a 
non-rectifiable Delone set, the uniquely ergodic case having been settled in the previous section. 

\begin{lem}\label{extremos} With the notation above, assume that 
$\mathcal{K}$ is not reduced to a singleton. Then there exist positive integers 
$m' \geq m$ and $i_0 \in [\![ 1,k_{m} ]\!]$ as well as real numbers $\bar{d}>\bar{d}'$ in $]0,1[$ such 
that for every $n \geq m'$, there exist  $j_{n+1},  j_{n+1}'$ in $[\![ 1, k_{n+1} ]\!]$ satisfying
$$
\frac{A_{m}\cdots A_n(i_0,j_{n+1})}{q_{n+1}} \geq \bar{d} 
\qquad \mbox{ and } \qquad  
\frac{A_{m}\cdots A_n(i_0,j_{n+1}')}{q_{n+1}} \leq \bar{d}'.
$$
\end{lem}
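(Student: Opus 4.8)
The plan is to distill from the matrices $A_n$ of Lemma~\ref{Choquet} a single coordinate $i_0$ and two thresholds $\bar d>\bar d'$ that record, for \emph{every} large $n$, a spread among the columns of the product $A_m\cdots A_n$ in that coordinate. The starting point is to reinterpret the hypothesis that $\mathcal{K}$ is not a singleton geometrically. By (K7) the simplex $\mathcal{K}$ is affine homeomorphic to $X:=\lim_{\leftarrow n}\big(\triangle(k_n,q_n),A_n\big)$, where each $A_n$ is viewed as an affine map $\triangle(k_{n+1},q_{n+1})\to\triangle(k_n,q_n)$; this is well defined because, by (K4), every column of $A_n$ has nonnegative entries summing to $q_{n+1}/q_n$. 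Writing $\pi_m:X\to\triangle(k_m,q_m)$ for the $m$-th coordinate projection, a routine compactness argument gives $\pi_m(X)=\bigcap_{n\geq m}K_n$ with $K_n:=A_m\cdots A_n\big(\triangle(k_{n+1},q_{n+1})\big)$, a nested decreasing intersection of compact convex sets. If all the $\pi_m(X)$ were singletons then $X$ would be a singleton; so, since $X$ is not, there is some $m$ with $\mathrm{diam}\,\pi_m(X)>0$. I would fix that $m$ and take $m':=m$.

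Next I would analyse the sets $K_n$ coordinatewise. For $1\leq j\leq k_{n+1}$, let $c_j^{(n)}:=\tfrac1{q_{n+1}}\big(\text{$j$-th column of }A_m\cdots A_n\big)$; since column sums multiply, these vectors lie in $\triangle(k_m,q_m)$ and $K_n=\mathrm{conv}\{c_j^{(n)}:1\leq j\leq k_{n+1}\}$. The key observation is a monotonicity: using (K4) for $A_{n+1}$, each $c_j^{(n+1)}$ is a convex combination of the $c_{j'}^{(n)}$, so for every coordinate $i\in[\![1,k_m]\!]$ the quantities $M_i^{(n)}:=\max_j c_j^{(n)}(i)$ and $\mu_i^{(n)}:=\min_j c_j^{(n)}(i)$ are, respectively, nonincreasing and nondecreasing in $n$, hence converge to limits $M_i^{\infty}\geq\mu_i^{\infty}\geq 0$, with $M_i^\infty\leq 1/q_m\leq 1$. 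Since the diameter of a polytope is attained between vertices, $\mathrm{diam}_{\infty}K_n=\max_i\big(M_i^{(n)}-\mu_i^{(n)}\big)$, and letting $n\to\infty$ (a finite maximum of convergent sequences) gives $\max_i\big(M_i^{\infty}-\mu_i^{\infty}\big)=\lim_n\mathrm{diam}_{\infty}K_n=\mathrm{diam}_{\infty}\,\pi_m(X)>0$ by the first paragraph.

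To conclude, I would pick $i_0\in[\![1,k_m]\!]$ realizing that maximum, so $0\leq\mu_{i_0}^{\infty}<M_{i_0}^{\infty}\leq 1$, and set
$$\bar d:=\tfrac{2M_{i_0}^{\infty}+\mu_{i_0}^{\infty}}{3},\qquad \bar d':=\tfrac{M_{i_0}^{\infty}+2\mu_{i_0}^{\infty}}{3},$$
so that $0<\bar d'<\bar d<1$. Then for every $n\geq m'$, monotonicity yields $\max_j c_j^{(n)}(i_0)=M_{i_0}^{(n)}\geq M_{i_0}^{\infty}>\bar d$ and $\min_j c_j^{(n)}(i_0)=\mu_{i_0}^{(n)}\leq\mu_{i_0}^{\infty}<\bar d'$; taking $j_{n+1}$ and $j_{n+1}'$ to be the corresponding maximizing and minimizing column indices gives
$$\frac{A_m\cdots A_n(i_0,j_{n+1})}{q_{n+1}}\geq\bar d,\qquad \frac{A_m\cdots A_n(i_0,j_{n+1}')}{q_{n+1}}\leq\bar d',$$
which is what is claimed. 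I expect the genuinely delicate point to be exactly this uniformity in $n$ together with the need to fix $i_0$ independently of $n$: a crude pigeonhole argument on coordinates only produces a good $i_0$ along a subsequence, and it is the monotonicity of $M_i^{(n)}$ and $\mu_i^{(n)}$ — a consequence of each new column being a convex average of the previous ones — that upgrades this to all $n\geq m'$. The remaining ingredients (the identification in (K7), multiplicativity of column sums, and the vertex formula for the diameter of a polytope) are routine, and no use of (K3), (K5) or (K6) should be needed here.
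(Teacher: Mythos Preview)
Your proof is correct. The paper takes a slightly different route: it fixes two distinct points $(u_n),(v_n)$ of the inverse limit, finds $m$ and $i$ with $u_{m,i}\neq v_{m,i}$, and then argues by contradiction that the spread $\alpha_n:=\max_j c_j^{(n)}(i)-\min_j c_j^{(n)}(i)$ (in your notation) cannot have a subsequence tending to zero, since writing $u_{m,i}$ and $v_{m,i}$ as convex combinations of the $c_j^{(n)}(i)$ yields $|u_{m,i}-v_{m,i}|\leq 2\alpha_n$. Your argument is more structural and avoids contradiction: you make the monotonicity of $M_i^{(n)}$ and $\mu_i^{(n)}$ explicit (each new column being, via (K4), a convex average of the previous ones), identify $\pi_m(X)$ with the nested intersection $\bigcap_n K_n$, and read off $i_0,\bar d,\bar d'$ directly from the limits $M_{i_0}^\infty>\mu_{i_0}^\infty$. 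That monotonicity is precisely what permits choosing thresholds uniform in $n$; the paper's proof needs it too but leaves the passage from ``$\alpha_n$ bounded below'' to ``fixed $\bar d>\bar d'$'' implicit. What the paper's version buys is brevity and a concrete anchor in two points of $X$; what yours buys is a constructive argument that is explicit about why a single coordinate $i_0$ and fixed thresholds work for all $n\geq m'$.
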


\begin{proof} Since $\mathcal{K}$ has at least two extreme points, there exist $(u_n), (v_n)$ in 
\hspace{0.02cm} $\lim_{\leftarrow n}(\triangle(k_n,q_{n}), A_n)$ such that for some positive integers 
$m$ and $i \in [\![ 1, k_{m} ]\!]$, the $i^{th}$-coordinates $u_{m,i}$ and $v_{m,i}$ of $u_{m}$ and 
$v_{m}$, respectively, are different. For each $n > m$,  we set
$$\alpha_n = \max\left \{ \left |\frac{A_{m}\cdots A_n (i,r)}{q_{n+1}} 
- \frac{A_{m}\cdots A_n(i,s)}{q_{n+1}}\right | \!\! : r,s \mbox{ in } [\![ 1, k_{n+1} ]\!] \right \}.$$
Suppose for a contradiction that there exists a subsequence $(\alpha_{n_{\ell}})$ converging to zero.  Then 
for every $j \in [\![ 1, k_{n_{\ell}+1} ]\!]$, there exists $\delta_{\ell,j} \in [-\alpha_{n_{\ell}}, \alpha_{n_{\ell}}]$ 
such that
$$
\frac{A_{m}\cdots A_{n_{\ell}}(i,j)}{q_{n_{\ell}+1}} 
=
\frac{A_{m}\cdots A_{n_{\ell}}(i,1)}{q_{n_{\ell}+1}}+\delta_{\ell,j}.
$$
Therefore,
\begin{eqnarray*}
u_{m,i} 
&=& \sum_{j=1}^{k_{n_{\ell}+1}}\frac{A_{m}\cdots A_{n_{\ell}}(i,j)}{q_{n_{\ell}+1}}q_{n_{\ell}+1}u_{n_{\ell},j}\\
&=&  \sum_{j=1}^{k_{n_{\ell}+1}} \left( \frac{A_{m}\cdots 
          A_{n_{\ell}}(i,1)}{q_{n_{\ell}+1}} +\delta_{\ell,j} \right) q_{n_{\ell}+1}u_{n_{\ell},j}\\
&=& \frac{A_{m}\cdots A_{n_{\ell}}(i,1)}{q_{n_{\ell}+1}} 
+  \sum_{j=1}^{k_{n_{\ell}+1}}\delta_{\ell,j}q_{n_{\ell}+1}u_{n_{\ell},j}
\end{eqnarray*}
and 
$$v_{m,i}=\frac{A_{m}\cdots A_{n_{\ell}}(i,1)}{q_{n_{\ell}+1}} 
+  \sum_{j=1}^{k_{n_{\ell}+1}}\delta_{\ell,j}q_{n_{\ell}+1} v_{n_{\ell},j}.$$
Thus we get
$$\big| u_{m,i} - v_{m,i} \big| 
\leq \sum_{j=1}^{k_{n_{\ell}+1}}|\delta_{\ell,j}|q_{n_{\ell}+1}(u_{n_{\ell},j}+v_{n_{\ell},j})
\leq 2 \alpha_{n_{\ell}},$$
which contradicts the fact that $u_{m,i} \neq v_{m,i}$.
\end{proof}

\vspace{0.1cm}

Let $\mathcal{K}$ be a Choquet simplex not reduced to a singleton, and let $(p_n)$ be a 
sequence of positive integers such that $p_1 = \max \{4,d\}$ for $\mathcal{K}$ 
$d$-dimensional, $p_1 = 4$ for $\mathcal{K}$ infinite-dimensional, and such 
that for every $n \!\geq\! 1$, one has $p_{n+1}=2(l_n+1)p_n$ for an integer $l_n\geq 1$.  
Let $(A_n)$ be a sequence of $k_n \times k_{n+1}$ matrices with positive integer entries 
verifying  the properties of Lemma \ref{Choquet} with respect to $q_n := p_n^2$. Let $m' \geq m$, 
$i_0 \in [\![ 1, k_{m} ]\!]$,  $d>d'$ in $]0,1[$ and $j_{n+1},  j_{n+1}'$ in $[\![ 1, k_{n+1} ]\!]$, 
be as in Lemma \ref{extremos}, where $n \geq m'$. Observe that we can (and we will) assume that 
$m=1$ and that both $j_{n+1}, j_{n+1}'$ are $\geq 2$ (the last assumption because the first two 
columns of each matrix $A_n$ are equal). Let $(r_n)$ be a sequence of positive integers such that 
$r_n p_n <p_{n+1}$, for all $n$.  

We set $F_1 := [\![0,p_1-1]\!]^2$, and for $n \geq 1$, we let 
$$
F_{n+1} := \bigcup_{v\in [\![-l_{n}-1,l_{n}]\!]^2}(F_{n}+p_{n}v).
$$
Next, we define the patch 
$$\mP_{1,i_0} := F_1\setminus \{(p_1-1, p_1-1)\}.$$ 
For $k\in [\![1, k_1]\!]\setminus \{i_0\}$, the patch $\mP_{1,k}$ is defined as (see Figure 4 below)
$$\mathcal{P}_{1,k} := \big\{ (i,j) \in F_1 \!:  i \mbox{ is even} \big\} 
\cup \big\{ (i,j) \in F_1 \!: j=0 \big\} \cup \big\{ (1,k) \big\}.$$  

\vspace{0.1mm}

\begin{center}
 \resizebox{8.5cm}{!}{%
\begin{tikzpicture}

\draw (0,5) node{$\bullet$};\draw (1,5) node{$\bullet$ };\draw (2,5) node{$\bullet$};\draw (3,5) node{$\bullet$ };\draw (4,5) node{$\bullet$};\draw (5,5) node{ };

\draw (0,4) node{$\bullet$};\draw (1,4) node{$\bullet$};\draw (2,4) node{$\bullet$};\draw (3,4) node{$\bullet$};\draw (4,4) node{$\bullet$};\draw (5,4) node{$\bullet$};

\draw (0,3) node{$\bullet$};\draw (1,3) node{$\bullet$ };\draw (2,3) node{$\bullet$};\draw (3,3) node{$\bullet$ };\draw (4,3) node{$\bullet$};\draw (5,3) node{$\bullet$};

\draw (0,2) node{$\bullet$};\draw (1,2) node{ $\bullet$};\draw (2,2) node{$\bullet$};\draw (3,2) node{$\bullet$ };\draw (4,2) node{$\bullet$};\draw (5,2) node{$\bullet$};

\draw (0,1) node{$\bullet$};\draw (1,1) node{ $\bullet$};\draw (2,1) node{$\bullet$};\draw (3,1) node{$\bullet$ };\draw (4,1) node{$\bullet$};\draw (5,1) node{$\bullet$};

\draw (0,0) node{$\bullet$};\draw (1,0) node{$\bullet$};\draw (2,0) node{$\bullet$};\draw (3,0) node{$\bullet$};\draw (4,0) node{$\bullet$};\draw (5,0) node{$\bullet$};

\draw(2.1, -1) node{$\mP_{1,i_0}$}; 

\draw (8,5) node{$\bullet$};\draw (9,5) node{ };\draw (10,5) node{$\bullet$};\draw (11,5) node{ };\draw (12,5) node{$\bullet$}; \draw (13,5) node{ }; 

\draw (8,4) node{$\bullet$};\draw (9,4) node{ };\draw (10,4) node{$\bullet$};\draw (11,4) node{};\draw (12,4) node{$\bullet$};\draw (13,4) node{}; 

\draw (8,3) node{$\bullet$};\draw (9,3) node{$\bullet$ };\draw (10,3) node{$\bullet$};\draw (11,3) node{  };\draw (12,3) node{$\bullet$};\draw (13,3) node{ }; 

\draw (8,2) node{$\bullet$};\draw (9,2) node{  };\draw (10,2) node{$\bullet$};\draw (11,2) node{ };\draw (12,2) node{$\bullet$};\draw (13,2) node{}; 

\draw (8,1) node{$\bullet$};\draw (9,1) node{  };\draw (10,1) node{$\bullet$};\draw (11,1) node{ };\draw (12,1) node{$\bullet$};\draw (13,1) node{ }; 

\draw (8,0) node{$\bullet$};\draw (9,0) node{$\bullet$};\draw (10,0) node{$\bullet$};\draw (11,0) node{$\bullet$};\draw (12,0) node{$\bullet$};\draw (13,0) node{$\bullet$}; 

\draw(10.1, -1) node{$\mP_{1,k}$ for $k=3 \neq i_0$}; 
\end{tikzpicture}}
\end{center}
\vspace{-0.6cm}
\begin{center} 
Figure 4: The patches $\mP_{1,k}$ for $p_1=6$, $k_1\geq 3$ and $i_0\neq 3$.
\end{center}

\vspace{0.3cm}

We next proceed to define patches $\mathcal{P}_{2,1},\ldots, \mathcal{P}_{2,k_2}$ in 
$\{ 0,1 \}^{F_2}$ satisfying:
\begin{itemize}

\item $\mP_{2,j} \cap \big( F_{1}+(l_1 p_1,l_1 p_1) \big)=\mP_{1,1}$, for each $j \in [\![ 1,k_2 ]\!]$ 
(that is, the upper-right corner of each $\mP_{2,j}$ is a copy of $\mP_{1,1}$);

\item  $\mathcal{P}_{2,j}\cap (F_{1}+vp_1)$ belongs to $\{\mathcal{P}_{1,1},\ldots, \mathcal{P}_{1,k_{1}}\}$, 
for every $j \!\in\! [\![ 1, k_{2} ]\!]$ and all $v \!\in\! [\![ -l_{1}-1,l_{1} ]\!]^2$;

\item  For all $i \in [\![ 1, k_{1} ]\!]$ and all $j \in [\![ 1, k_{2} ]\!]$, the number of vectors 
$v\in  [\![ -l_{1}-1,l_{1} ]\!]^2$ such that $\mathcal{P}_{2,j}\cap (F_{1}+vp_{1}) = \mP_{1,i}$ 
equals $A_{1}(i,j)$.
\end{itemize}
In order to check that it is possible to obtain $k_2$ different patches satisfying these three properties, just 
observe that the number of different ways to define a single patch $\mP_{2,k}$ satisfying all of them equals
$$
\frac{\big( \sum_{i=2}^{k_1}A_1(i,k) \big)!}{A_1(2,k)!\cdots A_1(k_1,k)!}
\geq \min\{A_1(i,j) \!: 2\leq i\leq k_1, 1\leq j\leq k_2\} \geq k_2.$$ 

Now, suppose that for $n\geq 2$, we have defined a collection $\mathcal{P}_{n,1},\ldots, \mathcal{P}_{n,k_n}$ 
of different patches in $\{0,1\}^{F_n}$. We will next proceed to define $k_{n+1}$ different patches  
$\mP_{n+1,1},\ldots, \mP_{n+1,k_{n+1}}$ in $\{0,1\}^{F_{n+1}}$ such that for all 
$k \in [\![ 1,k_{n+1} ]\!]$, the following properties are satisfied (see Figure 5):
\begin{itemize}
\item[{\bf (P1)}] $\mP_{n+1,k}\cap \big( F_n+(l_n p_n, l_n p_n ) \big) = \mP_{n,1}$;
\item[{\bf (P2)}]  For all $s\in [\![-l_{n}-1,l_{n}]\!]$ and $r\in [\![-l_{n}-1,-l_{n}+r_n-2]\!]$ , it holds
$$
\mP_{n+1,k}\cap \big( F_n+(sp_n, rp_n) \big) = \left\{ \begin{array}{lll}
                                                     \mP_{n,j_n} & \mbox{ if } &  
\Big[ \frac{s p_{n+1}}{r_n} \Big] \mbox{ is even,}\\
                                   \mP_{n,j_n'} & \mbox{ if } & 
\Big[ \frac{s p_{n+1}}{r_n} \Big] \mbox{ is odd;}
\end{array}\right.
$$
\vspace{-0.43cm}
\item [{\bf (P3)}] $\mathcal{P}_{n+1,k}\cap (F_{n}+vp_{n})$ belongs to 
$\{\mathcal{P}_{n,1},\ldots, \mathcal{P}_{n,k_{n}}\}$, for every $v\in [\![-l_{n}-1,l_{n}]\!]^2$;
\item[{\bf (P4)}]  The number of $v\in  [\![-l_{n}-1,l_{n}]\!]^2$ such that 
$\mathcal{P}_{n+1,k}\cap (F_{n}+vp_{n})=\mP_{n,i}$ equals $A_{n}(i,k)$.
\end{itemize}

\vspace{0.1mm}

\begin{center}
 \resizebox{8.5cm}{!}{%
\begin{tikzpicture}
\draw[style=dashed]  (0,0) rectangle (18,18);
\draw[fill=gray!50, dashed] (0,0) rectangle (3,3); 
 \draw[style=dashed]  (0,0) grid (3,3);
 \draw (0.5, 0.5) node{$\mP_{n,j_n}$};   \draw (1.5, 0.5) node{$\cdots$};  \draw (2.5, 0.5) node{$\mP_{n,j_n}$};
  \draw (0.5, 1.5) node{$\vdots$};            \draw (1.5, 1.5) node{$\cdots$};    \draw (2.5, 1.5) node{$\vdots$};
    \draw (0.5, 2.5) node{$\mP_{n,j_n}$};    \draw (1.5, 2.5) node{$\cdots$};     \draw (2.5, 2.5) node{$\mP_{n,j_n}$};
 
 \draw[style=dashed] (3,0) grid (6,3);
 \draw (3.5, 0.5) node{$\mP_{n,j_n'}$};   \draw (4.5, 0.5) node{$\cdots$};  \draw (5.5, 0.5) node{$\mP_{n,j_n'}$};
  \draw (3.5, 1.5) node{$\vdots$};            \draw (4.5, 1.5) node{$\cdots$};    \draw (5.5, 1.5) node{$\vdots$};
    \draw (3.5, 2.5) node{$\mP_{n,j_n'}$};    \draw (4.5, 2.5) node{$\cdots$};     \draw (5.5, 2.5) node{$\mP_{n,j_n'}$};
 
\draw[fill=gray!50, dashed] (12,0) rectangle (15,3); 
\draw[style=dashed]  (12,0) grid (15,3);
\draw (12.5, 0.5) node{$\mP_{n,j_n}$};   \draw (13.5, 0.5) node{$\cdots$};  \draw (14.5, 0.5) node{$\mP_{n,j_n}$};
  \draw (12.5, 1.5) node{$\vdots$};            \draw (13.5, 1.5) node{$\cdots$};    \draw (14.5, 1.5) node{$\vdots$};
    \draw (12.5, 2.5) node{$\mP_{n,j_n}$};    \draw (13.5, 2.5) node{$\cdots$};     \draw (14.5, 2.5) node{$\mP_{n,j_n}$};

\draw[style= dashed]  (15,0) grid (18,3);
 \draw (15.5, 0.5) node{$\mP_{n,j_n'}$};   \draw (16.5, 0.5) node{$\cdots$};  \draw (17.5, 0.5) node{$\mP_{n,j_n'}$};
  \draw (15.5, 1.5) node{$\vdots$};            \draw (16.5, 1.5) node{$\cdots$};    \draw (17.5, 1.5) node{$\vdots$};
    \draw (15.5, 2.5) node{$\mP_{n,j_n'}$};    \draw (16.5, 2.5) node{$\cdots$};     \draw (17.5, 2.5) node{$\mP_{n,j_n'}$};

\draw[fill=gray!30, dashed]  (17,17) rectangle (18,18);
\draw (17.5, 17.5) node{$\mP_{n,1}$};

\draw[style= dashed] (6.5,0.5) -- (11.5,0.5);
\draw[style= dashed] (6.5,1.5) -- (11.5,1.5);
\draw[style= dashed] (6.5,2.5) -- (11.5,2.5);
 \end{tikzpicture}}
\end{center}
\vspace{-0.6cm}
\begin{center} 
Figure 5: Building the patches $\mP_{n+1,k}$: the white part must be filled according to the rules (P3) 
and (P4), and the dashed lines stand for that we do not overlap patches as in the previous sections.
\end{center}

Notice that (P1) and (P2) completely determine how to fill 
\hspace{0.02cm} $\frac{p_{n+1}}{p_n}r_n \!+\! 1$ \hspace{0.02cm} translated copies 
of $F_n$. We thus need to fill, in different ways, the remaining (free) \hspace{0.02cm}
$\frac{p_{n+1}^2}{p_n^2}-\frac{p_{n+1}}{p_n}r_n-1$ \hspace{0.02cm} translated 
copies of $F_n$ in a way that (P4) is satisfied. To do this, notice that if 
$p_{n+1}$ is sufficiently large,  namely
\begin{equation}\label{ojo}
p_{n+1}>\frac{(k_n-1)p_n^2}{k_n-2}\left(\frac{r_n}{p_n}+1 \right),
\end{equation}
then
$$
(k_n-2)\frac{p_{n+1}^2}{p_n^2}  >   (k_n-1)\frac{p_{n+1}r_n}{p_n}+(k_n-1)p_{n+1},
$$
which implies
\begin{eqnarray*}
(k_n-1)\left( \frac{p_{n+1}^2}{p_n^2}-  \frac{p_{n+1}r_n}{p_n}- 1\right) 
\!\!& > &\!\!
(k_n-1)\left( \frac{p_{n+1}^2}{p_n^2}-  \frac{p_{n+1}r_n}{p_n}- p_{n+1}\right) 
>  \frac{p_{n+1}^2}{p_n^2}
>  \sum_{i=2}^{k_n}A_n(i,j)\\
&\geq& \!\! (k_n-1)\min\{A_n(i,j): 1\leq i\leq k_n, 1\leq j\leq k_{n+1}\}.
\end{eqnarray*}
Using this and (K5), we obtain 
$$
\frac{p_{n+1}^2}{p_n^2}-\frac{p_{n+1}r_n}{p_n}-1 >  k_{n+1}.
$$
Next, we notice that among the free translated copies of $F_n$, the number of those   
that have to be filled by copies of $\mP_{n,j_n}$ (resp. $\mP_{n,j_n'}$) equals 
$$A_{n}(j_n,k)-\frac{r_np_{n+1}}{2p_n}\geq r_np_{n+1}-r_n\frac{p_{n+1}}{2p_n}>0 \quad 
\left( \mbox{resp. } A_{n}(j_n',k)-\frac{r_np_{n+1}}{2p_n}\geq r_np_{n+1}-r_n\frac{p_{n+1}}{2p_n}>0 \right).$$
This easily allows producing patches $\mP_{n+1,1},\ldots, \mP_{n+1,k_{n+1}}$ that do satisfy 
(P4) and differentiate one from each other in the places where we put some patches 
$\mP_{n,j_n}, \mP_{n,j_n'}$ in a fixed family of $k_n$ free translated copies of $F_n$.
 
\vspace{0.2cm}
 
Having defined all patches $\mathcal{P}_{i,j}$, let us now consider the family of sets
$$X_n := \big\{ D\subseteq \mathbb{Z}^2 \!: D \cap (F_n+v) \in \{\mP_{n,1},\ldots, \mP_{n,k_n}\}, 
\mbox{ for every } v\in p_n\ZZ^2 \big\}.$$
It is clear that $(X_n)$ is a nested sequence of nonempty compact sets, hence their intersection is nonempty. 
Moreover, every element in this intersection is a Delone set that satisfies the $2\mathbb{Z}$-property. Fix 
such a set $\mathcal{D}$, and let $X$ be the closure of its orbit with respect to the translation action of 
$\mathbb{Z}^2$ (equivalently, of $\mathbb{R}^2$). For $n\geq 1$ and $k \in [\![ 1, k_n ]\!]$, we set
$$
C_{n,k} := \{D \in X \!: D \cap F_n=\mP_{n,k}\}.
$$
By the construction, 
$$U_n := \{C_{n,k}+v: 1\leq k\leq k_n, v\in F_n\}$$ 
is a clopen covering of $X$.  We claim that it is actually a partition of $X$. 
To show this, let us first consider the case of $U_1$. For all $D\!\in\! X_1$ and all 
$v \!\in\! p_1\ZZ^2$, the intersection $D \cap (F_1+v)$ belongs to $\{\mP_{1,1},\ldots, \mP_{1,k_1}\}$. 
If two atoms of $U_1$, say $C_{1,k} + v$ and $C_{1,k'}+v'$, have nonempty intersection, then 
letting $u := v - v'$, we have that $C_{1,k} + u$ intersects $C_{1,k'}$. Then, by looking at all 
possible intersections and having in mind the geometry of the patches $\mP_{1,k}$, one easily 
convinces that $u$ must belong to $p_1 \mathbb{Z}^2$. Since both $v$ and $v'$ lie in $F_1$, 
this implies that $u = 0$, hence $v = v'$, and finally $k = k'$. The proof for $(U_n)$ 
works by induction. Assuming that $U_{n-1}$ is a partition, a similar argument applies 
taking into account that the unique position in which $\mP_{n-1,1}$ appears in each 
patch $\mP_{n,k} $ is the upper-right corner. 

Next, let $\mu$ be an invariant probability measure for the translation action of 
$\mathbb{Z}^2$ on $X$.  We claim that the vectors of the $\mu$-measures, 
namely 
$$\mu_n := \big( \mu(C_{n,1}),\ldots, \mu(C_{n,k_n}) \big),$$ 
satisfy $\mu_n^T = A_n (\mu_{n+1}^T)$, for every $n\geq 1$. Indeed, we have
 \begin{eqnarray*}
 \mu(C_{n,i}) 
& = & \mu \Big( \bigcup_{k=1}^{k_{n+1}} \big\{ 
C_{n+1,k}+v \!: v\in F_{n+1}, C_{n+1,k}+v\subseteq C_{n,i} \big\} \Big)\\
       &=& \sum_{k=1}^{k_{n+1}} \big| \{v\in F_{n+1} \!:  
               C_{n+1,k}+v\subseteq C_{n,i}\} \big| \cdot \mu(C_{n+1,k})\\
       &=&\sum_{k=1}^{k_{n+1}} A_n(i,k) \cdot \mu(C_{n+1,k}),
 \end{eqnarray*}
which shows our claim.

We can thus consider the sequence $(\mu_n)$ as a point in the inverse limit 
$\lim_{\leftarrow n}(\triangle(k_n, p_n^2), A_n)$. Notice that the function 
$\mu \mapsto (\mu_n)$ from the set of invariant probability measures 
into the space $\lim_{\leftarrow n}(\triangle(k_n, p_n^2), A_n)$ is affine. 
We claim that it is a bijection. Indeed, on the one hand, given $(u_n)$ in 
$\lim_{\leftarrow n}(\triangle(k_n, p_n^2), A_n)$, we may produce a probability 
measure $\mu$ on $X$ by letting $\mu(C_{n,k}+v) = u_n(k)$, for every $k \in [\![ 1, k_n ]\!]$ 
and all $v\in F_n$. It is the not hard to check that $\mu$ is invariant under the translation action 
(see \cite[Lemma 5]{CP06}), thus showing the surjectivity of the map. 
On the other hand, to check that it is injective, consider the set
$$
X^* := 
\bigcup_{w\in \mathbb{Z}^2} 
\bigcap_{n\geq 1} 
\bigcup_{k=1}^{k_n}\bigcup_{v \in F_n \setminus F_n- w} 
\big( C_{n,k}+v \big). 
$$
This set contains all points of $X$ (if any) that are not separated by the partitions $(U_n)$. 
Indeed, if $D,D'$ are two such points, then for each $n \geq 1$ they belong to the same 
atom $C_{n,i_n} + v_n$ in $U_n$. If $D,D'$ are different, then there is $w \in \mathbb{Z}^2$ 
contained only in one of them.  Thus, $D + w$ and $D + w'$ differ at the origin, and therefore  
$C_{n,i_n} + v_n + w$ cannot be an atom of $U_n$. This implies that $v_n + w \notin F_n$, 
that is $v_n \in F_n \setminus F_n - w$, which shows our claim. 

Using the fact that $(F_n)$ is a F\o lner sequence, one can easily check that $\mu(X^*)=0$ 
for every invariant probability measure $\mu$. Indeed, for all $n \geq 1$ and all fixed 
$w \in \mathbb{Z}^2$, 
\begin{eqnarray*}
\mu \left( \bigcup_{k=1}^{k_n}\bigcup_{v \in F_n \setminus F_n- w} \big( C_{n,k}+v \big) \right)
&=& 
\sum_{k=1}^{k_n} \big| F_n \setminus F_n - w \big| \cdot \mu (C_{n,k}) \\
&=& 
\big| F_n \setminus F_n - w \big| \cdot \sum_{k=1}^{k_n} \mu (C_{n,k})\\
&=& \frac{\big| F_n \setminus F_n - w \big|}{|F_n|} 
    \hspace{0.1cm}  \underset{n\to \infty}{\longrightarrow}\hspace{0.1cm} 0,
\end{eqnarray*}
where the last equality follows from that $U_n$ is a partition of $X$. Thus, any given clopen 
set $C$ can be written as the union $C_1 \cup C_2$, where $C_1$ is a (countable) union 
of atoms of $(U_n)$ and $C_2$ is a subset of $X^*$. This shows that any probability 
measure $\mu$ on $X$ that is invariant under the translation action of $\mathbb{Z}^2$ 
is completely determined by the sequence $(\mu_n)$, thus showing the desired injectivity. 

\vspace{0.25cm}

We can now finish our construction. To do this, we consider the sequence $(p_n)$ defined by 
$p_1 := \max\{4,d\}$ in case $\mathcal{K}$ is $d$-dimensional , $p_1 := 4$ in case 
$\mathcal{K}$ is infinite-dimensional, and $p_{n+1} := 2 n! (p_n)^2$, for all 
$n \geq 1$. (This definition ensures property (\ref{ojo}).) Then we let $r_n := n!$, and 
we realize $\mathcal{K}$ as an inverse limit $\lim_{\leftarrow n} \big( \triangle(k_n,q_{n}), A_n \big)$, 
where $q_n := p_n^2$. Next, we perform the preceding construction for this realization. We thus obtain a 
Delone set $\mathcal{D}$ satisfying the $2\mathbb{Z}$-property and such that the set of invariant 
probability measures for the $\mathbb{Z}^2$-action on the closure of its orbit is affine isomorphic 
to $\mathcal{K}$. It remains showing that $\mathcal{D}$ is non-rectifiable. To do this, we will need 
the next 

\hspace{0.01cm}

\begin{lem}\label{densidades}  There exist $d>d'$ in $]0,1[$ such that for every $n>m'$,  
$$|\mP_{n,j_n}| \geq p_n^2 d>  p_n^2 d' \geq |\mP_{n,j'_n}|.$$
\end{lem}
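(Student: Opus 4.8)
The plan is to turn $|\mP_{n,j}|$ into an affine expression in the frequency with which the densest level-one patch $\mP_{1,i_0}$ occurs inside $\mP_{n,j}$, and then to feed that frequency into Lemma~\ref{extremos}. The starting point is a counting identity. In the construction of Section V the translates of $F_n$ tiling $F_{n+1}$ are pairwise disjoint, and by (P3) and (P4) each block $\mP_{n+1,k}\cap(F_n+p_nv)$ is a copy of some $\mP_{n,i}$, occurring for exactly $A_n(i,k)$ vectors $v$; hence $|\mP_{n+1,k}|=\sum_{i=1}^{k_n}A_n(i,k)\,|\mP_{n,i}|$. Iterating this down to level one gives $|\mP_{n,j}|=\sum_{i=1}^{k_1}(A_1\cdots A_{n-1})(i,j)\,|\mP_{1,i}|$, while summing property (K4) telescopically yields $\sum_{i=1}^{k_1}(A_1\cdots A_{n-1})(i,j)=q_n/q_1$, i.e.\ the number of translates of $F_1$ sitting inside $F_n$.

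Next I would read off the level-one cardinalities from Figure~4. One has $|\mP_{1,i_0}|=p_1^2-1=:C$, whereas for every $k\neq i_0$ the patch $\mP_{1,k}$ consists of the $p_1^2/2$ points of $F_1$ with even first coordinate, the $p_1/2$ points of the bottom row with odd first coordinate, and the single extra point $(1,k)$; thus $|\mP_{1,k}|=\tfrac{p_1^2}{2}+\tfrac{p_1}{2}+1=:c$, the same value for all $k\neq i_0$. Since $p_1\geq4$ we have $C-c=\tfrac{p_1^2}{2}-\tfrac{p_1}{2}-2>0$. Substituting into the counting identity and using the telescoped sum, for every column $j$,
$$
\frac{|\mP_{n,j}|}{p_n^2}=\frac{c}{q_1}+\frac{(A_1\cdots A_{n-1})(i_0,j)}{q_n}\,(C-c).
$$

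Finally, for $n>m'$ I would apply Lemma~\ref{extremos} (with $m=1$, and with its index shifted by one) to the columns $j_n$ and $j_n'$: this gives $\frac{(A_1\cdots A_{n-1})(i_0,j_n)}{q_n}\geq\bar d$ and $\frac{(A_1\cdots A_{n-1})(i_0,j_n')}{q_n}\leq\bar d'$. Plugging these into the displayed identity and using $C-c>0$, the claim holds with the $n$-independent constants
$$
d:=\frac{c}{q_1}+\bar d\,(C-c),\qquad d':=\frac{c}{q_1}+\bar d'\,(C-c).
$$
Indeed $d>d'$ because $\bar d>\bar d'$ and $C>c$; also $d'>\tfrac{c}{q_1}>\tfrac12>0$; and $d<1$ since each column of $A_1\cdots A_{n-1}$ sums to $q_n/q_1$, so that $\bar d\leq 1/q_1$ and therefore $d\leq C/q_1=(p_1^2-1)/p_1^2<1$. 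The computation is essentially routine; the only two points needing attention are the index shift (it is $A_1\cdots A_{n-1}$, not $A_1\cdots A_n$, that controls $|\mP_{n,j}|$, so Lemma~\ref{extremos} must be quoted one step earlier) and the strict inequality $C>c$, which is exactly what the normalisation $p_1\geq4$ guarantees.
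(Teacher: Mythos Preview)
Your proof is correct and follows essentially the same route as the paper's: express $|\mP_{n,j}|$ as an affine function of the entry $(A_1\cdots A_{n-1})(i_0,j)$ via the level-one cardinalities $C=p_1^2-1$ and $c=\tfrac{p_1^2}{2}+\tfrac{p_1}{2}+1$, then feed in the bounds from Lemma~\ref{extremos}; your constants $d,d'$ coincide with the paper's once one unwinds $q_1=p_1^2$ and $C-c=\tfrac{p_1^2}{2}-\tfrac{p_1}{2}-2$. Your explicit handling of the index shift and of the inclusion $d,d'\in{]0,1[}$ (via $\bar d\le 1/q_1$) is a welcome addition that the paper leaves implicit.
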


\begin{proof} First notice that $|\mP_{1,i_0}|=p_1^2-1$ 
and that for every $k \in [\![1, k_1]\!]\setminus\{i_0\}$, 
$$|\mP_{1,k}|=\frac{p_1^2}{2}+\frac{p_1}{2}+1.$$
Thus for every $n\geq 1$ and $k\in [\![1, k_n]\!]$, we have 
\begin{eqnarray*}
|\mP_{n,k}| 
& = & 
A_1\cdots A_{n-1}(i_0,k)\left( p_1^2-1\right) 
+\left(\frac{p_n^2}{p_1^2}-A_1\cdots A_{n-1}(i_0,k)\right)\left( \frac{p_1^2}{2}+\frac{p_1}{2}+1\right)\\
&=& 
A_1\cdots A_{n-1}(i_0,k)\left( \frac{p_1^2}{2}-\frac{p_1}{2}-2 \right)+\frac{p_n^2}{p_1^2}\left( \frac{p_1^2}{2}+\frac{p_1}{2}+1\right)
\end{eqnarray*}
By Lemma \ref{extremos}, for every $n>m'$,
\begin{eqnarray*}
|\mP_{n,j_n}| 
&\geq& 
\bar{d} p_n^2\left( \frac{p_1^2}{2}-\frac{p_1}{2}-2 \right)+\frac{p_n^2}{p_1^2}\left( \frac{p_1^2}{2}+\frac{p_1}{2}+1\right)\\
& >& 
\bar{d}'p_n^2\left( \frac{p_1^2}{2}-\frac{p_1}{2}-2 \right)+\frac{p_n^2}{p_1^2}\left( \frac{p_1^2}{2}+\frac{p_1}{2}+1\right)\\
&\geq& 
A_1\cdots A_{n-1}(i_0,j_n')\left( \frac{p_1^2}{2}-\frac{p_1}{2}-2 \right)+\frac{p_n^2}{p_1^2}\left( \frac{p_1^2}{2}+\frac{p_1}{2}+1\right)\\
&=& 
|\mP_{n,j_n'}|
\end{eqnarray*}
Thus, letting 
$$
d:= 
\bar{d} \left( \frac{p_1^2}{2}-\frac{p_1}{2}-2 \right)+\frac{1}{p_1^2}\left( \frac{p_1^2}{2}+\frac{p_1}{2}+1\right)
\quad \mbox{and} \quad
d':=
\bar{d}'\left( \frac{p_1^2}{2}-\frac{p_1}{2}-2 \right)+\frac{1}{p_1^2}\left( \frac{p_1^2}{2}+\frac{p_1}{2}+1\right)\!,
$$
we get the desired property.
\end{proof}

\vspace{0.1cm}

To conclude, we write $p_{n+1} = 2 p_n (n! p_n)$ and we refer to Proposition 
\ref{expandiendo} identifying $n! p_n$ with $M$ (which is a multiple of $P_* p_n$ for 
any prescribed $P_*$ provided $n$ is large enough) and $p_n$ with $N$. Then, an application 
of Proposition \ref{expandiendo} along the lines of the proof of Lemma~\ref{no-bilipschitz} 
allows showing that $\mD$ is not $L$-bi-Lipschitz equivalent to $\mathbb{Z}^2$ for any 
prescribed $L$, hence non rectifiable. 


\vspace{0.35cm}

\noindent{\bf Acknowledgments.} 
We would like to thank D. Coronel for his many hints and comments, 
and the anonymous referee for her/his useful remarks and corrections. 
Both authors where partially funded by the Anillo Research Project 1103 DySyRF. 
The first-named author was also funded by the Fondecyt Research Project 1140213. 
The second-named author acknowledges the CNRS (UMR 8628, Univ. d'Orsay) 
as well as the ERC starting grant 257110 ``RaWG''  for the support during 
the final stage of this work. He would also like to thank T.~Dymarz, 
A.~Erschler, P.~Py and R.~Tessera for their interest and useful discussions. 


\begin{footnotesize}


\vspace{0.3cm}

\noindent Mar\'{\i}a Isabel Cortez (maria.cortez@usach.cl)

\vspace{0.1cm}

\noindent Andr\'es Navas (andres.navas@usach.cl)

\vspace{0.2cm}

\noindent Dep. de Matem\'aticas, Fac. de Ciencia, Univ. de Santiago\\
\noindent Alameda 3363, Estaci\'on Central, Santiago, Chile\\

\end{footnotesize}

\end{document}